\newtheorem{theorem}{Theorem}[section]
\newtheorem{lemma}[theorem]{Lemma}
\newtheorem{proposition}[theorem]{Proposition}
\theoremstyle{definition}
\newcommand{\cB}{{\mathcal B}}
\newcommand{\cF}{{\mathcal F}}
\newcommand{\cN}{{\mathcal N}}
\newcommand{\cP}{{\mathcal P}}
\newcommand{\cU}{{\mathcal U}}
\newcommand{\tr}{{\text{tr}}}
\newcommand{\spn}{{\text{span}}}
\newcommand{\im}{{\text{Im}}}
\newcommand{\cl}{{\text{cl}}}
\newcommand{\rc}{{\mathrm c}}
\newcommand{\Lb}{{\mathfrak{b}}}
\newcommand{\Lt}{{\mathfrak{t}}}
\newcommand{\Lg}{{\mathfrak g}}
\newcommand{\Lo}{{\mathfrak o}}
\newcommand{\Ln}{{\mathfrak{n}}}
\newcommand{\Ll}{{\mathfrak{l}}}
\newcommand{\Lp}{{\mathfrak{p}}}
\newcommand{\LX}{{\mathfrak{X}}}
\newcommand{\LN}{{\mathfrak{N}}}
\newcommand{\tF}{{\textbf{F}}}
\newcommand{\tk}{{\textbf{k}}}
\newcommand{\p}{\perp}
\newcommand{\la}{\langle}
\newcommand{\ra}{\rangle}
\newcommand{\beq}{\begin{equation*}}
\newcommand{\eeq}{\end{equation*}}
\begin{document}
\title[Nilpotent coadjoint orbits in small characteristic]{Nilpotent coadjoint orbits in small characteristic}
        \author{Ting Xue}
        \address{Department of Mathematics, Northwestern University,
Evanston, IL 60208, USA}
        \email{txue@math.northwestern.edu}

\maketitle
\begin{abstract}
We show that the numbers of nilpotent coadjoint orbits  in the dual of  exceptional Lie algebra $G_2$ in characteristic $3$ and in the dual of exceptional Lie algebra $F_4$ in characteristic $2$ are finite. We determine the closure relation among nilpotent coadjoint orbits in the dual of  Lie algebras of type $B,C,F_4$ in characteristic $2$ and in the dual of  Lie algebra  of type $G_2$ in characteristic $3$. In each case we
give an explicit description of the nilpotent pieces in the dual defined in \cite{CP}, which are in general unions of nilpotent coadjoint orbits, coincide with the earlier case-by-case definition in \cite{L4,X4} in the case of classical groups and have nice properties independent of the characteristic of the base field. This completes the classification of nilpotent coadjoint orbits in the dual of Lie algebras of reductive algebraic groups and the determination of closure relation among such orbits in all characteristic.
\end{abstract}

{\bf Keywords:} Nilpotent coadjoint orbits; closure relation; nilpotent pieces; Springer correspondence.

\section{introduction}
Let $G$ be a connected reductive algebraic group defined over an algebraically closed field $\tk$ of characteristic $p\geq 0$. Let $\Lg$ be the Lie algebra of $G$ and $\Lg^*$ the dual vector space of $\Lg$. Denote by $\cN_{\Lg^*}$ the set of nilpotent elements in $\Lg^*$ (recall that an element $\xi:\Lg\to\tk^*$ is called nilpotent if it annihilates some Borel subalgebra of $\Lg$, see \cite{KW}). Note that $G$ acts on ${\Lg^*}$ by coadjoint action. The $G$-orbits in $\cN_{\Lg^*}$ under this action are called nilpotent coadjoint orbits. Such orbits play an important role in representation theory. When $p$ is not special for $G$ (i.e. $p$ does not equal the ratio of the squared lengths of long and short roots in any
irreducible component of the root system of $G$), there exists a $G$-equivariant bijection between the nilpotent variety $\cN_\Lg$ of $\Lg$ and $\cN_\Lg^*$ (see \cite[section 5.6]{PS}). Hence in these cases nilpotent coadjoint orbits in $\Lg^*$ can be and are often identified with nilpotent orbits in $\Lg$ ($G$-orbits in $\cN_\Lg$ under adjoint action). The latter has been extensively studied in all characteristic. For example, the number of nilpotent orbits in $\Lg$ is well-known to be finite and
closure relation among nilpotent orbits in $\Lg$ has been determined.

It remains to study the nilpotent coadjoint orbits in $\Lg^*$  when $p$ is special, more specifically, when $G$ is of type $B$, $C$ or $F_4$ and $p=2$, and when $G$ is of type $G_2$ and $p=3$. The  nilpotent coadjoint orbits in type $B,C$ when $p=2$ have been classified in \cite{X5}. We give the classification in the remaining cases here. In particular, it follows from the classification that the number of nilpotent coadjoint orbits in $\Lg^*$ is finite for any $\Lg^*$ as in the beginning of the introduction. This result has appeared in the PhD thesis of the author and we include it here for completeness. We determine the closure relation among nilpotent coadjoint orbits in $\Lg^*$ when $p$ is special. For this we can and will assume that $G$ is adjoint in type $B$ and simply connected in type $C$.

In \cite{L4,X4}  Lusztig and the author give a case-by-case definition of a partition of $\cN_{\Lg^*}$ into nilpotent pieces for classical groups,  indexed by the set $\underline{\cU_{G_\mathbb{C}}}$ of unipotent orbits in the group $G_{\mathbb{C}}$ over complex numbers of the same type as $G$. In \cite[Section 7]{CP} Clarke and Premet define nilpotent pieces in ${\Lg^*}$ uniformly across all types,  in the same way as Lusztig's original definition of unipotent pieces (a partition of unipotent variety in $G$). Moreover the nilpotent pieces in \cite{CP} equal the Hesselink strata \cite{H1} on $\cN_{\Lg^*}$ (considered as the null-cone of $\Lg^*$ under coadjoint $G$-action), and coincide with the nilpotent pieces defined in \cite{L4,X4} for classical types.  These pieces have nice properties independent of $p$. On the other hand, there is a natural injective map from the set $\underline{\cU_{G_\mathbb{C}}}$ to the set $\underline{\cN_{\Lg^*}}$ of nilpotent coadjoint orbits in $\Lg^*$ given by Springer correspondence. Using this map and the closure relation on $\underline{\cN_{\Lg^*}}$ one can define a partition of $\cN_{\Lg^*}$ into locally closed pieces. We show that for classical groups these pieces are the same as nilpotent pieces defined by Lusztig and the author, and thus also the same as nilpotent pieces defined by Clarke and Premet. 
In particular we determine which  nilpotent coadjoint orbits lie in
the same piece (Proposition \ref{prop-psi}). The analogous result for nilpotent pieces in $\Lg$ is given in \cite{X2}. We also describe nilpotent pieces in $\Lg^*$ for type $G_2$ when $p=2$ and for type $F_4$ when $p=3$.

This paper is organized as follows. Sections \ref{sec-recol}-\ref{sec-comdef} study the cases when $G$ is of type $B$, $C$ and $p=2$. In Section \ref{sec-recol} we recall a natural partial order on the set $W^\wedge$ of irreducible characters of the Weyl group of $G$ (used by Spaltenstein \cite{Spal} to describe closure relation among nilpotent orbits in $\Lg$), the classification of nilpotent coadjoint orbits in $\Lg^*$, the combinatorial description of the Springer correspondence maps,  and the definition of nilpotent pieces in $\Lg^*$ given in \cite{L4,X4}.    Section \ref{sec-sf} and Section \ref{sec-ind} are preparation for Section \ref{sec-ord}, where we describe the Springer fibers at elements in $\cN_{\Lg^*}$ and induction for nilpotent coadjoint orbits in $\Lg^*$ (by an easy adaptation of \cite{LS1,Spal}) respectively. In Section \ref{sec-ord} we determine the closure relation on $\underline{\cN_{\Lg^*}}$ which turns out to correspond  to the natural partial order  on $W^\wedge$ recalled in Subsection \ref{sec-dualnt} via Springer correspondence map. In Section \ref{sec-comdef} we describe the nilpotent pieces in $\Lg^*$ explicitly. In Section \ref{sec-excep} we classify the nilpotent coadjoint orbits in $\Lg^*$ when $G$ is of type $G_2$ and $p=3$, and when $G$ is of type $F_4$ and $p=2$. We determine the closure relation on $\underline{\cN_{\Lg^*}}$ and describe the nilpotent pieces in $\Lg^*$ explicitly.

\vskip 10pt {\noindent\bf\large Acknowledgement}   The author wish to thank George Lusztig and Kari Vilonen for helpful discussions and for encouragement. Thanks are also due to the referee for carefully reading the manuscript and for many suggestions that helped improve the exposition of the paper.

\section{Notations and recollections}\label{sec-recol}
Although nilpotent coadjoint orbits in type $D$ are not under consideration, we include the information for type $D$ in Subsections \ref{sec-dualnt}-\ref{ssec-spcp} for future use in the inductive proof of Theorem \ref{mainthm} in
type $B$ case.
\subsection{A partial order on the set  of irreducible characters of Weyl groups  of type $B,C,D$} \label{sec-dualnt} 
 For a finite group $H$ we denote by $H^\wedge$  the set of irreducible characters of $H$ (over $\mathbb{C}$).

Let $\cP(n)$ denote the set $\{\lambda=(\lambda_1\geq\lambda_2\geq\cdots)\,|\,|\lambda|:=\sum_{i\geq 1}\lambda_i=n\}$ of all partitions of an integer $n$. For a partition $\lambda\in\cP(n)$ and each $j\geq 1$, we set $$\lambda_j^*=|\{\lambda_i\,|\,\lambda_i\geq j|\}\text{ and }m_\lambda(j)=\lambda_{j}^*-\lambda_{j+1}^*.$$
Let $\cP_2(n)$ denote the set $\{(\mu)(\nu)\,|\,|\mu|+|\nu|=n\}$ of pairs of partitions. If $W$ is a Weyl group of type $B_n$ or $C_n$ (resp. $D_n$), we can identify $W^\wedge$ with the set $\cP_2(n)$ (resp. the set $\{(\mu)(\nu)\in\cP_2(n)\,|\,\text{if }i\text{ is the smallest integer such that $\mu_i\neq \nu_i$, then $\nu_i<\mu_i$}\}$ with each pair $(\mu)(\mu)$ counted twice) (\cite{Lu3}). There is a natural partial order on the set $\cP_2(n)$ as follows. We say that
$$(\mu)(\nu)\leq(\mu')(\nu')$$ if $$\sum_{i\in[1,j]}(\mu_i+\nu_i)\leq\sum_{i\in[1,j]}(\mu_i'+\nu_i')\text{ and } \sum_{i\in[1,j-1]}(\mu_i+\nu_i)+\mu_j\leq\sum_{i\in[1,j-1]}(\mu_i'+\nu_i')+\mu_j'\text{ for all }j\geq 1;$$
and that $(\mu)(\nu)<(\mu')(\nu')$ if $(\mu)(\nu)\leq(\mu')(\nu')$ and $(\mu)(\nu)\neq(\mu')(\nu')$. This gives rise to a partial order on $W^\wedge$ (in the case of type $D_n$, the two degenerate characters corresponding to a pair $(\mu)(\mu)$ are incomparable).

\subsection{Classification of nilpotent coadjoint orbits in $\Lg^*$ (type $B,C,D$)}\label{ssec-dualorbits} Let $V$ be a finite dimensional vector space over $\tk$ equipped with a fixed non-degenerate symplectic from $\la,\ra$ (resp. a fixed non-degenerate quadratic form $Q$ with the associated bilinear form denoted by $\beta$).
We can assume that $G=Sp(V,\la,\ra)$ (resp. $G=SO(V,Q)$), the subgroup of $GL(V)$ (resp. identity component of the subgroup $O(V,Q)$ of $GL(V)$) that preserves $\la,\ra$ (resp.  $Q$). Then $\Lg=\mathfrak{sp}(V,\la,\ra)=\{x\in\mathfrak{gl}(V)\,|\,\la xv,w\ra+\la v,xw\ra=0\ \forall\ v,w\in V\}$ (resp. $\Lg=\Lo(V,Q)=\{x\in\mathfrak{gl}(V)\,|\,\beta( xv,v)=0\ \forall\ v\in V,\ x|_{\text{Rad(Q)}}=0\}$).

Let $\mathfrak{Q}(V)$ (resp. $\mathfrak{S}(V)$) denote the vector space of all quadratic forms $V\to\tk$ (resp. all symplectic forms $V\times V\to\tk$). We have a vector space isomorphism (see \cite{L4,X4})\\[5pt]
\indent (a) \qquad$\mathfrak{sp}(V,\la,\ra)^*\xrightarrow{\sim}\mathfrak{Q}(V),\ \xi\mapsto\alpha_\xi,\ \alpha_\xi(v)=\la v,Xv\ra$\\[5pt] \indent \quad(resp. $\Lo(V,Q)^*\xrightarrow{\sim}\mathfrak{S}(V),\ \xi\mapsto\beta_\xi,\ \beta_\xi(v,w)=\beta(Xv,w)-\beta(v,Xw)$),\\[5pt]
where $X$ is such that $\xi(x)=\tr(Xx)$ for all $x\in\mathfrak{sp}(V,\la,\ra)$ (resp. $\Lo(V,Q)$).

Suppose that $G=Sp(V,\la,\ra)$ with $\dim V=2n$ and $\xi\in\cN_{\Lg^*}$. Let $\alpha_\xi$ always denote the quadratic form corresponding to $\xi$ under the isomorphism in (a) and let $\beta_\xi$ always denote the bilinear form associated to $\alpha_\xi$ (given by $\beta_\xi(v,w)=\alpha_\xi(v+w)-\alpha_\xi(v)-\alpha_\xi(w$)). Let $T_\xi:V\to V$ be defined by $\la T_\xi v,w\ra=\beta_\xi(v,w)$ for all $v,w\in V$. Assume that $p=2$. Then the $G$-orbit of $\xi$ is characterized by a pair $(\lambda,\chi)$ as follows (\cite{X5}):\\[5pt]
\indent (i)  the partition $\lambda\in\cP(2n)$ given by the sizes of Jordan blocks of $T_\xi$ (we have $m_\lambda(i)$ even for all $i>0$);\\[5pt]
\indent (ii) the map $\chi:\mathbb{N}\to\mathbb{N}$ given by $\chi(k):=\chi_{\alpha_\xi}(k)=\min\{l\,|\,T_\xi^kv=0\Rightarrow\alpha_\xi(T_\xi^lv)=0\ \forall\ v\in V\}$ (we have  $\frac{\lambda_i-1}{2}\leq\chi(\lambda_i)\leq\lambda_i$, $\chi(\lambda_i)\geq\chi(\lambda_{i+1})$ and $\lambda_i-\chi(\lambda_i)\geq\lambda_{i+1}-\chi(\lambda_{i+1})$ for all $i\geq 1$).

Suppose that $G=SO(V,Q)$ and $\xi\in\cN_{\Lg}^*$. Let $\beta_\xi$ always denote the symplectic form  corresponding to $\xi$ under the isomorphism in (a).

If $p=2$, $G=SO(V,Q)$ and $\dim V=2n$, let $T_\xi:V\to V$ be defined by $\beta( T_\xi v,w)=\beta_\xi(v,w)$ for all $v,w\in V$. The $O(V,Q)$-orbit of $\xi$ is characterized by a pair $(\lambda,\chi)$ as follows (\cite{H2}):\\[5pt]
\indent (i)  the partition $\lambda\in\cP(2n)$ given by the sizes of Jordan blocks of $T_\xi$ (we have $m_\lambda(i)$ even for all $i>0$);\\[5pt]
\indent (ii) the map $\chi:\mathbb{N}\to\mathbb{N}$ given by $\chi(k):=\chi_{T_\xi}(k)=\min\{l\,|\,T_\xi^kv=0\Rightarrow Q(T_\xi^lv)=0\ \forall\ v\in V\}$ (we have $\frac{\lambda_i}{2}\leq\chi(\lambda_i)\leq\lambda_i$, $\chi(\lambda_i)\geq\chi(\lambda_{i+1})$ and $\lambda_i-\chi(\lambda_i)\geq\lambda_{i+1}-\chi(\lambda_{i+1})$ for all $i\geq 1$).

Assume that $p=2$, $G=SO(V,Q)$ and $\dim V=2n+1$. Let $m\in[0,n]$ be the unique integer such that there exists a (unique) set of vectors $\{v_i,i\in[0,m]\}$ with \\[5pt]
\indent (b) $Q(v_m)=1,\ Q(v_i)=0,\ \beta_\xi(v_i,v)=\beta(v_{i-1},v),\ i\in[1,m],\ \beta(v_m,v)=0,\  \beta_\xi(v_0,v)=0,\ \forall\ v\in V.$\\[5pt]
If $m=0$, let $W$ be a complementary subspace of $\spn\{v_0\}$ in $V$; if $m\geq 1$, let $\{u_i,i\in[0,m-1]\}$ be a set of vectors such that\\[5pt]
\indent (c) $Q(u_0)=0,\beta(u_0,v_j)=\delta_{j,0},j\in[0,m];\beta_\xi(u_{i-1},v)=\beta(u_{i},v),Q(u_i)=0,i\in[1,m-1],\forall\  v\in V$\\[5pt]  and let $W=\{v\in V\,|\,\beta(v,u_i)=\beta(v,v_i)=\beta_\xi(u_0,v)=0\}$. Then $V=\spn\{u_i,v_i\}\oplus W$ and $\beta|_{W}$ is non-degenerate. Define $T_\xi: W\to W$ by
$\beta_\xi(w,w')=\beta( T_\xi w,w')$ for all $w,w'\in W$ and let $\chi_W:\mathbb{N}\to\mathbb{N}$ be given by $\chi_W(k)=\min\{l\,|\,T_\xi^kw=0\Rightarrow Q(T_\xi^lw)=0\ \forall\ w\in W\}$.
Then the orbit of $\xi$ is characterized by $(m;(\lambda,\chi))$ as follows (\cite{X5}):\\[5pt]
\indent (i) the integer $m\in[0,n]$;\\[5pt]
\indent (ii) the partition $\lambda\in\cP(2n-2m)$ given by the sizes of Jordan blocks of $T_\xi$ (we have $m_\lambda(i)$ even for all $i>0$);\\[5pt]
\indent (iii)  the map $\chi:\mathbb{N}\to\mathbb{N}$ given by $\chi(i)=\max(i-m,\chi_W(i))$ (we have $m\geq\lambda_i-\chi(\lambda_i)\geq\lambda_{i+1}-\chi(\lambda_{i+1})$, $\frac{\lambda_i}{2}\leq\chi(\lambda_i)\leq\lambda_i$ and $\chi(\lambda_i)\geq\chi(\lambda_{i+1})$ for all $i\geq 1$).\\[5pt]
Note that $(m;(\lambda,\chi))$ does not depend on the choice of $W$ and $u_0$.

Let $\mathfrak{N}_{B_n}^{*2}$ (resp. $\mathfrak{N}_{C_n}^{*2}$, $\mathfrak{N}_{D_n}^{*2}$) be the set of all $(m;(\lambda,\chi))$ (resp. $(\lambda,\chi)$) corresponding to nilpotent coadjoint orbits ($p=2$) in $\mathfrak{o}(2n+1)^*$ (resp. $\mathfrak{sp}(2n)^*$, $\Lo(2n)^*$). Note that in the case of $\Lo(2n)^*$, there are two orbits corresponding to each pair $(\lambda,\chi)$ with $\chi(\lambda_i)=\frac{\lambda_i}{2}$ for all $i$. Let $$\mathfrak{N}_{B}^{*2}=\cup_{n\geq 0}\mathfrak{N}_{B_n}^{*2}, \ \ \mathfrak{N}_{C}^{*2}=\cup_{n\geq 0}\mathfrak{N}_{C_n}^{*2},\ \ \mathfrak{N}_{D}^{*2}=\cup_{n\geq 0}\mathfrak{N}_{D_n}^{*2}.$$

\subsection{Combinatorial description of Springer correspondence maps (type $B,C,D$)}\label{ssec-spcp} Let $W$ denote the Weyl group of $G$. Recall that we have an injective Springer correspondence map \cite{X5}:
$$\gamma_{\Lg^*}:\underline{\cN_{\Lg^*}}\to W^\wedge,$$
which maps an orbit $\rc$ to the Weyl group character corresponding to the pair $(\rc,1)$ under Springer correspondence.
When $p=2$, the Springer correspondence maps $\gamma_{\Lg^*}$ are given as follows (\cite{X3}) \begin{eqnarray*}&&\gamma^*_{B_n}:=\gamma_{\mathfrak{o}(2n+1)^*}:\underline{\cN_{\mathfrak{o}(2n+1)^*}}=\LN_{B_n}^{*2}\to W^\wedge,\\&& \qquad (m;(\lambda,\chi))\mapsto(\mu)(\nu),\ \mu_1=m,\ \mu_{i+1}=\lambda_{2i-1}-\chi(\lambda_{2i-1}),\ \nu_i=\chi(\lambda_{2i-1}),\ i\geq 1;\\&&\gamma^*_{C_n}:=\gamma_{\mathfrak{sp}(2n)^*}:\underline{\cN_{\mathfrak{sp}(2n)^*}}=\LN_{C_n}^{*2}\to W^\wedge,\\&&\qquad (\lambda,\chi)\mapsto(\mu)(\nu),\ \mu_i=\chi(\lambda_{2i-1}),\ \nu_i=\lambda_{2i-1}-\chi(\lambda_{2i-1}),\ i\geq 1;\\
&&\gamma^*_{D_n}:=\gamma_{\mathfrak{o}(2n)^*}:\underline{\cN_{\mathfrak{o}(2n)^*}}=\LN_{D_n}^{*2}\to W^\wedge,\\&&\qquad (\lambda,\chi)\mapsto(\mu)(\nu),\ \mu_i=\chi(\lambda_{2i-1}),\ \nu_i=\lambda_{2i-1}-\chi(\lambda_{2i-1}),\ i\geq 1.
\end{eqnarray*}
We denote the image of $\gamma_{B_n}^*$ (resp. $\gamma_{C_n}^*$, $\gamma_{D_n}^*$) (when $p=2$) by $\LX_{B_n}^{*2}$ (resp. $\LX_{C_n}^{*2}$ and $\LX_{D_n}^{*2}$). Let $\LX_{B}^{*2}=\cup_{n\geq 0}\LX_{B_n}^{*2}$, $\LX_{C}^{*2}=\cup_{n\geq 0}\LX_{C_n}^{*2}$ and $\LX_{D}^{*2}=\cup_{n\geq 0}\LX_{D_n}^{*2}$.
We have $$\LX_B^{*2}=\{(\mu)(\nu)|\nu_i\geq\mu_{i+1}\},\ \LX_{C}^{*2}=\{(\mu)(\nu)|\nu_i\leq\mu_i+1\}$$ (here we use the identification of $W^\wedge$ with $\cP_2(n)$).

\subsection{Nilpotent pieces in $\mathfrak{sp}(2n)^*$ and $\Lo(2n+1)^*$}\label{ssec-p}Suppose that $G=Sp(V,\la,\ra)$ (resp. $SO(V,Q)$) as in Subsection \ref{ssec-dualorbits}. Let $\rc\in\underline{\cN_{\Lg^*}}$ and
$\xi \in \rc$. Let $V_*=(V_{\geq a})_{a\in\mathbb{Z}}$ be the canonical filtration of $V$ associated to $\xi$ as in \cite{L4,X4}, where $V_{\geq a+1}\subset V_{\geq
a}\subset V$. If $p\neq 2$, let $T_\xi$ be defined as in Subsection \ref{ssec-dualorbits} (resp. by $\beta( T_\xi v, w)=\beta_\xi(v,w)$ for all $v,w\in V$), then\\[5pt]
\indent\qquad (a) {\em $V_{\geq a}=\sum_{j\geq\max(0,a)}T_\xi^j(\ker T_\xi^{2j-a+1})$.}\\[5pt]
The definitions of $V_*$ when $p=2$ are recalled in  \ref{lem-sp1} (resp. \ref{lem-o1}). We
define $$f_a=\dim V_{\geq a}/V_{\geq a+1}.$$
Then
 $f_a\neq 0$ for finitely many $a\in\mathbb{Z}$ and $f_{-a}=f_a$.
 The sequence of numbers $(f_a)_{a\in\mathbb{N}}$ ($\mathbb{N}=\{0,1,2,\ldots\}$)  depends only on $\rc$ and not
on the choice of $\xi\in\rc$; we denote this sequence  by
$\Upsilon_\rc$. Two sequences $(f_a)_{a\in\mathbb{N}}$ and $(h_a)_{a\in\mathbb{N}}$ are equal iff $f_a=h_a$ for all $a\in\mathbb{N}$.

\begin{lemma}[\cite{L4,X4}]\label{lempiece}
The orbits $\rc_1,\rc_2\in\underline{\cN_{\Lg^*}}$ lie in the same piece if and only if
$\Upsilon_{\rc_1}=\Upsilon_{\rc_2}$.
\end{lemma}
Note that if $p\neq 2$, the orbit of  $\xi\in\cN_{\Lg^*}$ is characterized  by the partition $\lambda$ given by the sizes of Jordan blocks of $T_\xi$. It follows from  (a) that\\[5pt]
\indent\qquad (a$'$) {\em if $p\neq2$, $\rc=\lambda$ and $\Upsilon_\rc=(f_a)_{a\in\mathbb{N}}$, then $f_a=\sum_{i\in\mathbb{N}}m_{\lambda}(a+2i+1)\text{ for all }a\in\mathbb{N}$.}\\[5pt]
Moreover each nilpotent piece consists of one orbit when $p\neq 2$.

\subsubsection{Canonical filtrations $V_*$  for $\xi\in\cN_{\mathfrak{sp}(2n)^*}$ ($p=2$)}\label{lem-sp1}
Assume that $G=Sp(V,\la,\ra)$ and $p=2$. Let
$\xi\in \cN_{\Lg^*}$ and let $\alpha_\xi,\beta_\xi,T_\xi$ be defined for $\xi$ as in Subsection \ref{ssec-dualorbits}. The canonical filtration $V_*=(V_{\geq a})$ associated to $\xi$ is defined by induction on $\dim V$ as follows  (\cite{L4}),  where $V_{\geq a}=V_{\geq 1-a}^\p$. If $\xi=0$, we set $V_{\geq a}=0$ for all $a\geq 1$ and $V_{\geq a}=V$ for all $a\leq0$. Hence $V_*$ is defined when $\dim V\leq 1$. Assume now that $\xi\neq 0$ and $\dim V\geq 2$. Let $e$ be the smallest integer such that $T_\xi^eV=0$, $f$ the smallest integer such that $\alpha_\xi (T_\xi^fV)=0$ and\\[5pt]
\indent\hspace{1.5in}$N=\max(e-1,2f-1).$\\[5pt] Then $N\geq 1$. We set
\begin{eqnarray*}&&V_{\geq a}=V\text{ for all } a\leq-N;\ V_{\geq a}=0\text{ for all }a\geq N+1;\\&&V_{\geq -N+1}=\left\{\begin{array}{ll}\{v\in V|T_\xi^{e-1}v=0\}&\text{ if }e=2f+1\\\{v\in V|T_\xi^{e-1}v=0, \alpha_\xi(T_\xi^{f-1}v)=0\}&\text{ if }e=2f\\
\{v\in V|\alpha_\xi(T_\xi^{f-1}v)=0\}&\text{ if }e<2f\end{array}\right.;\ V_{\geq N}=V_{\geq -N+1}^\perp.
\end{eqnarray*}
Let $V'=V_{\geq -N+1}/V_{\geq N}$. Then $\la,\ra$ induces a nondegenerate symplectic form $\la,\ra'$ on $V'$ and $\alpha_\xi$ induces a quadratic form $\alpha_{\xi'}$ corresponding to $\xi'\in \cN_{\Lg'^*}$, where $\Lg'=\mathfrak{sp}(V',\la,\ra')$. By induction hypothesis, a canonical filtration $V'_*=(V'_{\geq a})$ of $V'$ is defined for $\xi'$. For $a\in[-N+1,N]$ we set $V_{\geq a}$ to be the inverse image of $V'_{\geq a}$ under the natural map $V_{\geq -N+1}\to V'$ (note that $V'_{\geq N}=0$ and $V'_{\geq -N+1}=V'$). This completes the definition of $V_*$.

\subsubsection{}\label{lem-sp2}
Suppose that $p=2$ and the $G$-orbit of $\xi\in\cN_{\mathfrak{sp}(2n)^*}$ corresponds to $(\lambda,\chi)\in\LN^{*2}_C$. Recall that (\cite{X5}) we have a decomposition $V=\oplus_{a\in[1,r]}W(a)$ of $V$ into mutually orthogonal $T_\xi$-stable subspaces  such that $m_\lambda(i)=\sum _{a\in[1,r]}m_{\lambda^a}(i)$, $\chi(i)=\max_a\chi_a(i)$, where $\alpha_\xi|_{W(a)}=(\lambda^a,\chi_a)$. Moreover, $\alpha_\xi|_{W(a)}={}^*W_{\chi(\lambda_{2a})}(\lambda_{2a})$, $a\in[1,r]$,
where\\[5pt]
\indent (a) $\alpha_\xi|_W={}^*W_{l}(s)$ means that  there exist $v,w\in W$ such that $W=\text{span}\{T_\xi^iv,T_\xi^{i}w,i\in[0,s-1]\}, \ \la T_\xi^iv,w\ra=\delta_{i,s-1},\  \alpha_\xi(T_\xi^iv)=\delta_{i,l-1},\ \alpha_\xi(T_\xi^iw)=0$; we have $\chi_{\alpha_\xi|_W}(i)=\max(0,\min(i-s+l,l))$.

We state some facts  which will be used  later (see \cite{X5}).\\[5pt]
\indent(i) Let $W$ be a $T_\xi$-stable subspace of $V$ such that $\alpha_\xi|_W={}^*W_f(e-j)$ with  $f>\frac{e-j}{2}$. Let  $K_W=\{v\in W|\alpha_\xi(T_\xi^{f-1}v)=0\}$, $L_W=K_W^{\p}\cap W$, $W'=K_W/L_W$ and let $\alpha_{\xi'}$ be the quadratic form on $W'$ induced by $\alpha_\xi$. Using the basis for $W$ chosen as in (a), one can easily check that\\[5pt]  \indent\hspace{1.5in}$\alpha_{\xi'}|_{W'}={}^*W_{f-1}(e-j-1).$\\[5pt]
\indent(ii) Let $W$ be a $T_\xi$-stable subspace of $V$ such that $\alpha_\xi|_W={}^*W_f(e)^{a}$ (an orthogonal decomposition into $a$ copies of ${}^*W_f(e)$), where $f\leq\frac{e}{2}$. Let $W'=(\ker T_\xi^{e-1}\cap W)/T_\xi^{e-1}W$ and $\alpha_{\xi'}$ be the quadratic form  on $W'$ induced by $\alpha_\xi$. Using the basis for $W$ chosen as in (a), one can easily check that  \\[5pt]\indent\hspace{1.5in}$\alpha_{\xi'}|_{W'}={}^*W_{f-1}(e-2)^{a}.$

\subsubsection{Canonical filtrations $V_*$  for $\xi\in\cN_{\mathfrak{o}({2n+1})^*}$ ($p=2$)}\label{lem-o1}
Assume that $G=SO(V,Q)$ and $p=2$. Let
$\xi\in \cN_{\Lg^*}$ and let $\beta_\xi$, $m,\{v_i\},\{u_i\},W,T_\xi,\chi$ be defined for $\xi$ as in Subsection \ref{ssec-dualorbits}. The canonical filtration $V_*=(V_{\geq a})$  associated to $\xi$ is defined  by induction on $\dim V$ as follows  (see
\cite{X4}), where $V_{\geq 1-a}=V_{\geq a}^\p$ and $Q|_{V_{\geq a}}=0$ for all $a\geq 1$. If $\xi=0$ we set $V_{\geq a}=0$ for all $a\geq 1$ and $V_{\geq a}=V$ for all $a\leq0$. Hence $V_*$ is defined when $\dim V\leq 1$. Assume now that $\xi\neq 0$ and $\dim V\geq 2$. Let $e$ be the smallest integer such that $T_\xi^eW=0$, $f=\chi(e)$ and\\[5pt]\indent\hspace{1.5in}$N=\max(2m,m+f-1).$\\[5pt] Then $N\geq 1$. We set
\begin{eqnarray*}&&V_{\geq a}=V\text{ for all } a\leq-N;\ V_{\geq a}=0\text{ for all }a\geq N+1;\ V_{\geq N}=V_{\geq -N+1}^\perp\cap Q^{-1}(0),\\
&&V_{\geq -N+1}=\left\{\begin{array}{ll}\spn\{v_m\}\oplus\ker T_\xi^{e-1}&\text{if }m=0,\\[4pt]\spn\{v_i,i\in[0,m],u_i,i\in[1,m-1]\}\oplus W& \text{if } m\geq f,\\[4pt]
\spn\{v_i,i\in[0,m],u_i,i\in[1,m-1]\}\\\oplus\{w\in W|Q(T_\xi^{f-1}w)=0\}& \text{if }e-f<m<f,\\[4pt]
\spn\{v_i,i\in[0,m],u_i,i\in[1,m-1]\}\\\oplus\{w\in W|T_\xi^{e-1}w=0,Q(T_\xi^{f-1}w)=0\} &\text{if }0<m=e-f=f-1, \\[4pt]
&\text{or }0<m=e-f<f-1\text{ and }\rho\neq 0,\\[4pt]
\spn\{v_i,i\in[0,m],u_i,i\in[1,m-1]\}\\\oplus \spn\{u_0+w_{**}\}\oplus\ker T_\xi^{e-1}
& \text{if }0<m=e-f<f-1\text{ and }\rho=0,\\\end{array}\right.
\end{eqnarray*}
where $\rho:\ker T_\xi^{e-1}\to\tk$ is the map $w\mapsto Q(T_\xi^{f-1}w)$ and $w_{**}\in W$ is  such that $\beta({T_\xi^{e-1}w_{**},w})^2=Q(T_\xi^{f-1}w)$ for all $w\in W$.

Let $V'=V_{\geq -N+1}/V_{\geq N}$. Then $Q$ induces a non-degenerate quadratic form $Q'$ on $V'$ and $\beta_\xi$ induces a symplectic form $\beta_{\xi'}$ corresponding to  $\xi'\in\cN_{\Lg'^*}$, where $\Lg'=\mathfrak{o}(V',Q')$. By induction hypothesis, a canonical filtration $V'_*=(V'_{\geq a})$ of $V'$ is defined for $\xi'$. For $a\in[-N+1,N]$ we set $V_{\geq a}$ to be the inverse image of $V'_{\geq a}$ under the natural map $V_{\geq -N+1}\to V'$ (note that $V'_{\geq N}=0$ and $V'_{\geq -N+1}=V'$). This completes the definition of $V_*$.

\section{Springer fibers}\label{sec-sf}

Suppose that $p=2$ and $G$ is of type $B$ or $C$ in this section.

\subsection{}
For a Borel subgroup $B$ of $G$, we
denote by $\Lb$ the Lie algebra of $B$ and define $\Ln^*=\{\xi\in\Lg^*\,|\,\xi(\Lb)=0\}$.
For $\xi\in\cN_{\Lg^*}$, denote by $cl_G(\xi)$ the $G$-orbit of $\xi$ and by $Z_G(\xi)$ the centralizer of $\xi$ in $G$. Let $\cB^G$ be the variety of all Borel subgroups of $G$ and
 let
$\cB^G_\xi=\{B\in\cB^G\,|\,\xi\in\Ln^*\}$ be the Springer fiber at $\xi\in\cN_{\Lg^*}$.  One can easily adapt the proofs in \cite[p. 132]{st} and \cite{Spa2} to show that $\cB_
\xi^G$ is connected and all its irreducible components have the same dimension.
\begin{proposition}\label{prop-sf}
Suppose that $\xi\in\cN_{\Lg^*}$. We have $\displaystyle{\dim\cB^G_\xi=\frac{{\dim Z_G(\xi)-\text{rank}\ G}}{2}}.$
\end{proposition}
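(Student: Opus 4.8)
The plan is to reduce the statement to the corresponding classical fact about the Springer fiber in the Lie algebra of a group in good characteristic, by exhibiting a suitable model for $\cB^G_\xi$ and computing both sides of the claimed identity combinatorially. Concretely, using the classification in Subsection \ref{ssec-dualorbits}, fix $\xi\in\cN_{\Lg^*}$ with orbit datum $(\lambda,\chi)$ (type $C$) and recall that $\cB^G_\xi$ is nonempty, connected, and equidimensional. The first step is to set up a flag-variety description: a Borel $B$ lies in $\cB^G_\xi$ iff the associated isotropic flag $0=F_0\subset F_1\subset\cdots\subset F_n\subset V$ (completed by orthogonals) satisfies $\xi(\Lb)=0$, which via the isomorphism of Subsection \ref{ssec-dualorbits}(a) translates into the vanishing conditions $\alpha_\xi|_{F_i}=0$ together with $\beta_\xi(F_i,F_{n+1-i})=0$ — equivalently, $T_\xi F_i\subset F_{i}^{\perp}$ and $\alpha_\xi|_{F_i}=0$. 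This exhibits $\cB^G_\xi$ as a variety of isotropic flags compatible with the nilpotent endomorphism $T_\xi$ and with the extra quadratic constraint coming from $\alpha_\xi$; it is the analogue for $\Lg^*$ of the classical Spaltenstein variety.

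The second step is the dimension count on the right-hand side. One computes $\dim Z_G(\xi)$ directly from the orbit classification: $\dim Z_G(\xi)=\dim G-\dim cl_G(\xi)$, and $\dim cl_G(\xi)$ is given by an explicit formula in terms of $(\lambda,\chi)$ (this is in \cite{X5}); subtracting $\operatorname{rank} G=n$ and halving gives a combinatorial expression $E(\lambda,\chi)$. The third step is the dimension count on the left-hand side: one stratifies $\cB^G_\xi$ according to the relative position of the flag with respect to $\ker T_\xi$, $\operatorname{im}T_\xi$, and the radical of $\alpha_\xi$ on the relevant subquotients, and runs an induction on $\dim V$ that mirrors the inductive structure of the canonical filtration in \ref{lem-sp1} and the reduction lemmas in \ref{lem-sp2}. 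At each inductive step, passing from $V$ to the subquotient $V'=V_{\geq -N+1}/V_{\geq N}$ replaces $(\lambda,\chi)$ by a strictly smaller datum, and the fiber of the induced projection $\cB^G_\xi\to\cB^{G'}_{\xi'}$ over a point is itself a flag variety whose dimension is computable; one checks that the dimension added at each step matches the difference $E(\lambda,\chi)-E(\lambda',\chi')$. A parallel argument handles type $B$, using \ref{lem-o1} in place of \ref{lem-sp1} and keeping track of the integer $m$.

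The main obstacle I expect is the bookkeeping in the inductive step: unlike the equal-characteristic case, the quadratic form $\alpha_\xi$ introduces the parameter $\chi$ and the reduction in \ref{lem-sp2}(i),(ii) is not a single clean operation but splits into cases (according to whether $f>\frac{e-j}{2}$ or $f\le\frac{e}{2}$, and in type $B$ according to the position of $m$ relative to $f$ and $e-f$), so matching the dimension increments case by case is where the real work lies. An alternative that avoids some of this is to invoke a dimension formula of the shape $\dim\cB^G_\xi=\dim\cB^{G}_{\xi}$ for a carefully chosen $\mathfrak{sl}_2$- or cocharacter-associated parabolic and compare with the good-characteristic answer via the Bala--Carter--type parametrization; but since the orbits in $\Lg^*$ in characteristic $2$ do not all lift to characteristic $0$, I would rather carry out the direct induction and treat the formula $\dim Z_G(\xi)-\operatorname{rank}G$ purely combinatorially, checking the base cases $\dim V\le 2$ by hand and then verifying the single identity $E(\lambda,\chi)-E(\lambda',\chi')=(\text{codimension of the sub-flag-variety added at step }N)$ in each of the finitely many reduction types.
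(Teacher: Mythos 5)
Your steps (1) and (2) — reinterpreting $\cB^G_\xi$ as a variety of complete isotropic flags subject to the $\alpha_\xi$, $\beta_\xi$ vanishing conditions, and computing the right-hand side from the formula for $\dim Z_G(\xi)$ in \cite{X5} — match what the paper does. The problem is in step (3), where you propose to run the induction along the canonical filtration $V_*$ of \ref{lem-sp1}: you claim a projection $\cB^G_\xi\to\cB^{G'}_{\xi'}$ with $V'=V_{\geq -N+1}/V_{\geq N}$. That map is not well defined. The filtration $V_*$ is a fixed object attached to $\xi$ alone, and a complete flag $F=(V_i)\in\cF^G_\xi$ is only constrained by $\alpha_\xi|_{V_i}=0$ and $\beta_\xi(V_i,V_{2n+1-i})=0$; these conditions do not force $F$ to be in any particular position relative to $V_{\geq -N+1}$ or $V_{\geq N}$, so there is no natural complete flag in $V'$ that $F$ induces. (The filtration $V_*$ is the right tool for the nilpotent-pieces comparison in Section \ref{sec-comdef}, not for the Springer fiber.)

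The induction that actually works, and that the paper carries out, is the Spaltenstein-style peel-off by the \emph{first} flag term: for $F=(V_i)\in\cF^G_\xi$, the line $V_1$ satisfies $\alpha_\xi|_{V_1}=0$, $\beta_\xi(V_1,V)=0$, so one forms $V'=V_1^\perp/V_1$ (going $Sp(2n)\to Sp(2n-2)$, or $SO(2n+1)\to SO(2n-1)$), and the tail of the flag descends to a flag in $V'$. This gives a fibration $X_{(\lambda',\chi')}\to Y_{(\lambda',\chi')}$, $F\mapsto V_1$, whose fibers are Springer fibers $\cF^{G'}_{\xi'}$ for the smaller group. The combinatorial content is then exactly Lemma \ref{lem-spop} (resp.\ Lemma \ref{lem-soop}): it identifies the reductions $(\lambda,\chi)\rightsquigarrow(\lambda',\chi')$ achieving $\dim X_{(\lambda',\chi')}=\dim\cF^G_\xi$ and records $\dim Y_{(\lambda',\chi')}$. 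With that lemma in hand, $\dim\cF^G_\xi=\dim Y_{(\lambda',\chi')}+\dim\cF^{G'}_{\xi'}$, and one checks the identity against $\tfrac{1}{2}(\dim Z_G(\xi)-n)$ by the explicit formula $\dim Z_G(\xi)=\sum_{i\geq1}(i\lambda_i-\chi(\lambda_i))$ (type $C$) or $m+\sum_{i\geq1}((i+1)\lambda_i-\chi(\lambda_i))$ (type $B$). So you should replace the canonical-filtration reduction by the $V_1$-peel-off; the rest of your plan then goes through and coincides with the paper's argument.
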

The proofs of the proposition for $G=Sp(2n)$ and $G=SO(2n+1)$  are given in Subsection \ref{op-sp} and  Subsection \ref{sec-op-o} respectively.
\subsection{}\label{op-sp}Suppose that $G=Sp(V,\la,\ra)$ with $\dim V=2n$ in this subsection. We can identify the variety $\cB^G$ with the set $\cF^G$ of all complete flags $0=V_0\subset V_1\subset \cdots\subset V_{2n}=V$ such that $V_{2n-i}=V_i^\p$ for all $ i\leq n$.
\begin{lemma} Suppose that $\xi\in\cN_{\Lg^*}$. We can identify $\cB_\xi^G$ with the set
$$\cF_\xi^G=\{F=(V_0\subset V_1\subset\cdots\subset V_{2n})\in\cF^G\,|\,{\beta_\xi(V_i,V_{2n+1-i})=0}\text{ and }\alpha_\xi(V_i)=0\text{ for all }i\leq n\}.$$
\end{lemma}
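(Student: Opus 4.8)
The plan is as follows. Under the identification $\cB^G\cong\cF^G$, a flag $F=(V_0\subset\cdots\subset V_{2n})\in\cF^G$ corresponds to the Borel whose Lie algebra is $\Lb_F=\{x\in\Lg\mid xV_i\subseteq V_i\ \forall\,i\}$, and the associated space $\Ln^*$ is $\{\xi\in\Lg^*\mid\xi(\Lb_F)=0\}$; hence $\cB_\xi^G$ is identified with $\{F\in\cF^G\mid\xi(\Lb_F)=0\}$. Writing
$$\cS_F:=\{\xi\in\Lg^*\mid\alpha_\xi(V_i)=0\text{ and }\beta_\xi(V_i,V_{2n+1-i})=0\text{ for all }i\le n\},$$
which is a linear subspace of $\Lg^*\cong\LQ(V)$ (because $\xi\mapsto\alpha_\xi$ is linear and evaluation and polarization are linear on $\LQ(V)$), the lemma becomes the identity of subspaces $\{\xi\mid\xi(\Lb_F)=0\}=\cS_F$. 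I would prove this by establishing the inclusion $\subseteq$ directly and then matching dimensions.

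For the inclusion, choose a symplectic basis $v_1,\dots,v_n,v_n',\dots,v_1'$ adapted to $F$ (so $V_i=\spn\{v_1,\dots,v_i\}$ for $i\le n$, $\la v_j,v_k'\ra=\delta_{jk}$ and the other pairings vanish; such a basis exists by symplectic Gram--Schmidt along $F$), and fix $X\in\mathfrak{gl}(V)$ with $\xi(x)=\tr(Xx)$ for all $x\in\Lg$ and $\alpha_\xi(v)=\la v,Xv\ra$, as in \S\ref{ssec-dualorbits}. The crux is that, for $v\in V_n$, the endomorphism $y_v\colon u\mapsto\la v,u\ra v$ and, for $v\in V_i$ and $w\in V_{2n+1-i}$ with $i\le n$, the endomorphism $z_{v,w}\colon u\mapsto\la v,u\ra w+\la w,u\ra v$ both lie in $\Lb_F$. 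That $y_v,z_{v,w}\in\mathfrak{sp}(V,\la,\ra)=\Lg$ is a one-line check in characteristic $2$; that they preserve $F$ follows from $V_n=V_n^{\p}$ (which kills $\la v,V_j\ra$ for $j\le n$) and $V_{2n+1-i}=V_{i-1}^{\p}$ (which kills $\la w,V_j\ra$ for $j\le i-1$), together with the fact that $v\in V_i$ and $w\in V_{2n+1-i}$ are themselves contained in $V_j$ for $j$ in the complementary ranges, so in every case $y_v(V_j),z_{v,w}(V_j)\subseteq V_j$. A short trace computation (using $\sum_k\la v,w_k\ra w_k^{\vee}=v$, where $(w_k^{\vee})$ is the $\la,\ra$-dual of a basis $(w_k)$) then gives $\xi(y_v)=\tr(Xy_v)=\la v,Xv\ra=\alpha_\xi(v)$ and $\xi(z_{v,w})=\tr(Xz_{v,w})=\la v,Xw\ra+\la Xv,w\ra=\beta_\xi(v,w)$. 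If $\xi(\Lb_F)=0$ these all vanish, and letting $v$ run over $V_n$ and $(v,w)$ over $V_i\times V_{2n+1-i}$ gives exactly $\xi\in\cS_F$.

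To finish, I would compute $\dim\cS_F=n^2$, which equals $\dim\Lg^*-\dim\Lb_F=(2n^2+n)-(n^2+n)$ and hence forces $\{\xi\mid\xi(\Lb_F)=0\}=\cS_F$, proving both inclusions. Writing $V=V_n\oplus U$ with $U=\spn\{v_1',\dots,v_n'\}$, an element of $\LQ(V)$ with $\alpha_\xi|_{V_n}=0$ is freely and uniquely determined by $\alpha_\xi|_U$ ($\binom{n+1}{2}$ parameters) and the bilinear form $V_n\times U\to\tk,\ (x,y)\mapsto\beta_\xi(x,y)$ ($n^2$ parameters); and — since $\alpha_\xi|_{V_n}=0$ forces $T_\xi v_i\in V_n$ — the remaining conditions $\beta_\xi(V_i,V_{2n+1-i})=0$ $(i\le n)$ amount to $T_\xi V_i\subseteq V_{i-1}$ for $i\le n$, i.e. $\beta_\xi(v_i,v_k')=0$ for all $k\ge i$, which are $\binom{n+1}{2}$ independent linear conditions on that bilinear form. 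Hence $\dim\cS_F=\binom{n+1}{2}+\bigl(n^2-\binom{n+1}{2}\bigr)=n^2$.

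The main obstacle I anticipate is the characteristic-$2$ bookkeeping in the middle step: verifying $y_v,z_{v,w}\in\mathfrak{sp}(V,\la,\ra)\cap\Lb_F$ and, above all, that $\tr(Xy_v)$ and $\tr(Xz_{v,w})$ recover $\alpha_\xi(v)$ and $\beta_\xi(v,w)$ exactly — precisely the place where one must track that $X$ is only a non-canonical lift and that $\Lg^*\not\cong\Lg$. Everything else is routine. One could replace the dimension count by exhibiting, for each $\xi\in\cS_F$, a spanning family of $\Lb_F$ on which $\xi$ visibly vanishes, but the count is shorter.
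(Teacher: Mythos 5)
Your argument is correct, but it takes a genuinely different route from the paper's. The paper chooses a symplectic basis $e_{\pm1},\dots,e_{\pm n}$ adapted to the flag, writes $\xi(x)=\tr(Xx)$ out as an explicit linear combination of the matrix entries $x_{ij}$ with coefficients $\beta_\xi(e_a,e_b),\ \alpha_\xi(e_{\pm a})$, restricts to the upper-triangular pattern of $\Lb_F$, and reads off in one pass that $\xi(\Lb_F)=0$ is equivalent to $\alpha_\xi(e_a)=0$ ($a\le n$), $\beta_\xi(e_a,e_b)=0$ ($1\le a<b\le n$) and $\beta_\xi(e_a,e_{-b})=0$ ($1\le a\le b\le n$), which it then repackages as the flag conditions. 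You instead prove the inclusion $\{\xi:\xi(\Lb_F)=0\}\subseteq\cS_F$ by exhibiting the rank-$\le2$ elements $y_v$, $z_{v,w}$ of $\Lb_F$ and the trace identities $\xi(y_v)=\alpha_\xi(v)$, $\xi(z_{v,w})=\beta_\xi(v,w)$, and then close with the dimension count $\dim\cS_F=n^2=\dim\Lg^*-\dim\Lb_F$. Both arguments are sound and of comparable length; yours has the mild advantage of isolating which elements of $\Lb_F$ actually detect the flag conditions (and the trace identities would make the reverse inclusion transparent if one wished to avoid the dimension count), while the paper's coordinate computation obtains the equivalence directly without an auxiliary count. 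Two small remarks: the flag-preservation check for $z_{v,w}$ in the range $n<j\le 2n-i$, where one uses $V_i\subseteq V_{2n-j}=V_j^{\p}$ to kill $\la v,u\ra$, is compressed in your write-up and worth a sentence; and the memberships $y_v,z_{v,w}\in\mathfrak{sp}(V)$ actually hold in every characteristic (the two contributing terms cancel over $\mathbb{Z}$), so no characteristic-$2$ bookkeeping is needed at that particular step.
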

\begin{proof}
Let $\xi\in\mathfrak{sp}(2n)^*$, $x\in\mathfrak{sp}(2n)$ and $F=(V_0\subset V_1\subset \cdots\subset V_{2n})\in\cF^G$.  There exists a basis $e_i$, $i\in[-n,n]-\{0\}$, of $V$ such that $\la e_i,e_j\ra=\delta_{i+j,0}$ and $V_k=\spn\{e_i,\ i\in[1,k]\}$ for all $k\leq n$. Assume that $xe_i=\sum_{j}x_{ij}e_j$. We have $x_{i,-j}+x_{j,-i}=0$ and
\begin{eqnarray*}&&\xi(x)=\sum_{i,j\in[1,n]}x_{ji}\beta_\xi(e_i,e_{-j})+\sum_{1\leq i<j\leq n}x_{j,-i}\beta_\xi(e_{-j},e_{-i})+\sum_{1\leq i<j\leq n}x_{-i,j}\beta_\xi(e_j,e_i)\\&&\qquad\qquad+\sum_{i\in[1,n]}x_{i,-i}\alpha_\xi(e_{-i})+\sum_{i\in[1,n]}x_{-i,i}\alpha_\xi(e_i).\end{eqnarray*}
Let $B=\{g\in G|g V_i=V_i\}$ be the Borel subgroup corresponding to $F$. For $x\in\Lb$, we have $x_{ij}=0$ for $1\leq i<j\leq n$, and $x_{i,-j}=0$ for $i,j\in[1,n]$. Hence $\xi\in\Ln^*$ if and only if $\beta_\xi(e_i,e_{-j})=0$ for all $1\leq i\leq j\leq n$, $\beta_\xi(e_j,e_i)=0$ for all $1\leq i<j\leq n$, and $\alpha_\xi(e_i)=0$ for all $i\in[1,n]$. The lemma follows.
\end{proof}
 Let $\xi\in\cN_{\Lg^*}$ and suppose that  $cl_G(\xi)=(\lambda,\chi)\in\LN_{C_{n}}^{*2}$. Let $F=(V_i)\in\cF_\xi^G$. Then we have $\alpha_\xi|_{V_1}=0$ and $\beta_\xi(V_1,V)=0$. Let $V'=V_1^\p/V_1$. The non-degenerate symplectic form $\la,\ra$ on $V$  induces a non-degenerate symplectic form $\la,\ra'$ on $V'$; we write $\Lg'=\mathfrak{sp}(V',\la,\ra')$. Moreover the quadratic form $\alpha_\xi$ induces a quadratic form $\alpha_{\xi'}$ on $V'$ which corresponds to an orbit $(\lambda',\chi')$ in $\cN_{\Lg'^*}$. Let $K=\{v\in V|\alpha_\xi(v)=0,\beta_\xi(v,V)=0\}$ and for each $(\lambda',\chi')\in\LN_{C_{n-1}}^{*2}$ that arises in this way, let
\begin{eqnarray*}&&Y_{(\lambda',\chi')}=\{V_1\in\mathbb{P}(K)| \text{ the quadratic form }\alpha_\xi'\text{ on }V_1^\p/V_1\text{ induced by }\alpha_\xi\text{ corresponds to }(\lambda',\chi')\},\\&&
X_{(\lambda',\chi')}=\{F=(V_i)\in\cF_\xi^G|V_1\in Y_{(\lambda',\chi')}\}.\end{eqnarray*} Then the fibers of the morphism $$X_{(\lambda',\chi')}\to Y_{(\lambda',\chi')}, \ F=(V_i)\mapsto V_1,$$ are isomorphic to $\cF_{\xi'}^{G'}$, where $G'=Sp(2n-2)$ and the $G'$-orbit of $\xi'$ is $(\lambda',\chi')$. Now we have that $\dim \cF_\xi^G=\max\dim X_{(\lambda',\chi')}$ and by induction hypothesis that $\dim\cF_{\xi'}^{G'}=(\dim Z_{G'}(\xi')-n+1)/2$. For $\xi$ corresponding to $(\lambda,\chi)$, we have $\dim Z_G(\xi)=\sum_{i\geq 1} (i\lambda_i-\chi(\lambda_i))$ (see \cite{X5}). It is then easy to check that $\dim\cF_{\xi}^{G}=(\dim Z_G(\xi)-n)/2$ using the following lemma (\cite[5.3]{X3}).
\begin{lemma}\label{lem-spop} We have $\dim X_{(\lambda',\chi')}=\dim \cF_\xi^G$ if and only if $(\lambda',\chi')$ and $(\lambda,\chi)$ are related as follows:  $\lambda_j'=\lambda_j$ and $\chi'(\lambda_j')=\chi(\lambda_j)$ for $j\notin\{ i-1,i\}$,
$\lambda_{i-1}'=\lambda_i'=\lambda_i-1$,
 and
$\chi'(\lambda_{i-1}')=\chi'(\lambda_{i}')\in\{\chi(\lambda_i),\chi(\lambda_i)-1\}$
satisfies $[\lambda_i'/2]\leq\chi'(\lambda_i')\leq\lambda_i'$,
$\chi(\lambda_{i+1})\leq\chi'(\lambda_i')\leq\chi(\lambda_{i+1})+\lambda_i-\lambda_{i+1}-1$ (this can happen
if $\lambda_{i-1}=\lambda_{i}>\lambda_{i+1}$). Moreover $\dim Y_{(\lambda,\chi')}=i-1$ if
$\chi'(\lambda_i')=\chi(\lambda_i)$ and $\dim Y_{(\lambda,\chi')}=i-2$ if
$\chi'(\lambda_i')=\chi(\lambda_i)-1$.
\end{lemma}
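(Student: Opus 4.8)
The plan is to run the reduction set up just before the lemma to its conclusion. The morphism $X_{(\lambda',\chi')}\to Y_{(\lambda',\chi')}$ is surjective with every fiber isomorphic to the one variety $\cF^{G'}_{\xi'}$, which is connected with equidimensional irreducible components, so $\dim X_{(\lambda',\chi')}=\dim Y_{(\lambda',\chi')}+\dim\cF^{G'}_{\xi'}$; by the inductive hypothesis on $\dim V$ together with $\dim Z_{G'}(\xi')=\sum_{j\geq 1}(j\lambda_j'-\chi'(\lambda_j'))$ (\cite{X5}) this gives
\[
\dim X_{(\lambda',\chi')}=\dim Y_{(\lambda',\chi')}+\tfrac{1}{2}\bigl(\sum_{j\geq 1}(j\lambda_j'-\chi'(\lambda_j'))-n+1\bigr).
\]
Since $\dim\cF^G_\xi=\max_{(\lambda',\chi')}\dim X_{(\lambda',\chi')}$, the statement reduces to (1) enumerating the admissible pairs $(\lambda',\chi')$ together with $\dim Y_{(\lambda',\chi')}$ for each, and (2) maximizing the right-hand side above over them.

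For (1): $\beta_\xi(v,V)=0$ is equivalent to $T_\xi v=0$, so $K=\{v\in\ker T_\xi\mid\alpha_\xi(v)=0\}$, which, since $\alpha_\xi$ is Frobenius-semilinear on $\ker T_\xi=\text{Rad}\,\beta_\xi$, is a linear subspace of $\ker T_\xi$ of codimension $0$ or $1$. For $v\in K$ the operator $T_\xi$ preserves $v^\p$ and kills $\langle v\rangle$, so $T_{\xi'}$ on $V'=v^\p/\langle v\rangle$ is induced by $T_\xi$. I would fix the orthogonal decomposition $V=\oplus_a W(a)$ with $\alpha_\xi|_{W(a)}={}^*W_{\chi(\lambda_{2a})}(\lambda_{2a})$ of \ref{lem-sp2}(a), write $v$ in the adapted basis, and read off $\lambda'$: since $m_{\lambda'}(j)$ must stay even, passing to $v^\p/\langle v\rangle$ can only lower two equal consecutive parts $\lambda_{i-1}=\lambda_i>\lambda_{i+1}$ to $\lambda_i-1$ (a vertical-domino reduction), which is exactly the shape asserted. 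Reading off the induced quadratic form on $v^\p/\langle v\rangle$ block by block and invoking the reduction identities \ref{lem-sp2}(i),(ii) then pins down $\chi'(\lambda_{i-1}')=\chi'(\lambda_i')\in\{\chi(\lambda_i),\chi(\lambda_i)-1\}$, and the inequalities in the statement are precisely the conditions that $(\lambda',\chi')\in\LN^{*2}_{C_{n-1}}$ be realizable this way.

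For $\dim Y_{(\lambda',\chi')}$: writing $v$ in the adapted basis one sees that the locus of lines $\langle v\rangle\subset K$ yielding a fixed admissible $(\lambda',\chi')$ is a dense open subset of the projectivization of an explicit linear subspace of $K$, determined by which blocks $v$ is allowed to meet and by one additional semilinear vanishing condition in the case $\chi'(\lambda_i')=\chi(\lambda_i)-1$; a direct coordinate count then yields $\dim Y_{(\lambda',\chi')}=i-1$ when $\chi'(\lambda_i')=\chi(\lambda_i)$ and $\dim Y_{(\lambda',\chi')}=i-2$ when $\chi'(\lambda_i')=\chi(\lambda_i)-1$. Finally, substituting these values and using $\dim Z_G(\xi)=\sum_{j\geq 1}(j\lambda_j-\chi(\lambda_j))$, a comparison over the finitely many admissible $(\lambda',\chi')$ shows that $\dim X_{(\lambda',\chi')}$ attains its maximum $\tfrac{1}{2}(\dim Z_G(\xi)-n)$ exactly at the pairs listed, which simultaneously completes the inductive proof of Proposition \ref{prop-sf} for $G=Sp(2n)$.

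The main obstacle is part (1): the characteristic-$2$ bookkeeping of carrying the quadratic form $\alpha_\xi$ through the subquotient $v^\p/\langle v\rangle$. The partition part $\lambda\mapsto\lambda'$ is the usual domino reduction, but $\chi$ is genuine quadratic-form data, invisible to $T_\xi$ alone, so one must work block by block with the normal forms ${}^*W_l(s)$ and the identities \ref{lem-sp2}(i),(ii); and then $\dim Y_{(\lambda',\chi')}$, including the exact off-by-one between the cases $\chi'(\lambda_i')=\chi(\lambda_i)$ and $\chi'(\lambda_i')=\chi(\lambda_i)-1$, must be extracted with care. Once the admissible pairs and the formula for $\dim Y_{(\lambda',\chi')}$ are in hand, the concluding maximization is routine.
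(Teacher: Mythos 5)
The paper does not prove this lemma itself: it is cited verbatim from \cite[5.3]{X3}, and the text of the present paper merely uses it as input to compute $\dim\cF^G_\xi$, so there is no ``paper's own proof'' to compare against. That said, your overall plan -- stratify $\bP(K)$ by the isomorphism class $(\lambda',\chi')$ of the induced quadratic form on $v^\p/\la v\ra$, recognize $K=\{v\in\ker T_\xi\mid\alpha_\xi(v)=0\}$ as a linear space (codimension $\le 1$ in $\ker T_\xi$ since $\alpha_\xi$ is Frobenius-semilinear there), reduce $\lambda$ by a vertical domino because all multiplicities must stay even, compute $\dim Y$ by a coordinate count, and then maximize $\dim Y_{(\lambda',\chi')}+\dim\cF^{G'}_{\xi'}$ using the inductive hypothesis from Proposition~\ref{prop-sf} -- is the right shape for the argument, and your arithmetic is self-consistent: with $\dim Y=i-1-\epsilon$ and $\chi'(\lambda'_i)=\chi(\lambda_i)-\epsilon$, the induction step indeed produces $\dim X_{(\lambda',\chi')}=\dim\cF^G_\xi$.

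The genuine gap is in the block-by-block tracking of $\chi'$, which is precisely where you flag the difficulty. You invoke the reduction identities of \ref{lem-sp2}(i),(ii) to ``pin down'' $\chi'(\lambda'_i)\in\{\chi(\lambda_i),\chi(\lambda_i)-1\}$, but those identities describe entirely different subquotients: $K_W/L_W$ with $K_W=\{v\in W\mid\alpha_\xi(T_\xi^{f-1}v)=0\}$, and $(\ker T_\xi^{e-1}\cap W)/T_\xi^{e-1}W$. Neither is $v^\p/\la v\ra$ for $v\in\ker T_\xi$ with $\alpha_\xi(v)=0$, so there is nothing to ``invoke'' -- the needed reduction identity (for a single normal-form block ${}^*W_l(s)$ and an arbitrary $v$ in its socle with $\alpha_\xi(v)=0$) must be derived from scratch. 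Compounding this, a general $v\in K$ has nonzero components in several of the blocks $W(a)$, and then $v^\p/\la v\ra$ does not split as a direct sum of modified blocks; you either need a normal-form argument showing one can re-choose the orthogonal decomposition so that $v$ lies in a single block, or a direct analysis of the mixed case. Finally, to establish the ``only if'' direction of the lemma you must compute $\dim Y_{(\lambda',\chi')}$ for \emph{all} admissible vertical-domino reductions and all admissible values of $\chi'$, not only for the maximizing ones listed in the statement, and then verify that the ones excluded (e.g.\ smaller values of $\chi'(\lambda'_i)$, or reductions at $i$ with $\lambda_{i-1}\ne\lambda_i$) give a strictly smaller $\dim X$. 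None of these steps is conceptually surprising, but they carry essentially the whole content of the lemma, and as written your proposal leaves them as acknowledged ``obstacles'' rather than resolving them.
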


\subsection{}\label{sec-op-o}Suppose that $G=SO(V,Q)$ with $\dim V=2n+1$ in this subsection. We can identify the variety $\cB^G$ with the set $\mathcal{F}^G$ of all complete flags $0=V_0\subset V_1\subset \cdots\subset V_{2n+1}=V$  such that $V_{2n+1-i}=V_i^\p$ and $Q|_{V_i}=0$ for all $i\leq n$.
\begin{lemma}
Suppose that $\xi\in\cN_{\Lg^*}$. We can identify $\cB_\xi^G$ with the set
$$\cF_\xi^G=\{F=(V_0\subset V_1\subset\cdots\subset V_{2n+1})\in\cF^G\,|\,{\beta_\xi(V_i,V_{2n+2-i})=0}\text{ for all }i\leq n+1\}.$$
\end{lemma}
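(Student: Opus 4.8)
The plan is to imitate the proof of the preceding symplectic lemma. Since $B\mapsto F(B)$ identifies $\cB^G$ with $\cF^G$, it is enough to fix a flag $F=(V_i)\in\cF^G$, let $B$ be the corresponding Borel subgroup with Lie algebra $\Lb$, and show that $\xi\in\Ln^*$ (i.e.\ $\xi(\Lb)=0$) if and only if $\beta_\xi(V_i,V_{2n+2-i})=0$ for all $i\le n+1$. First I would choose a basis $e_i$, $i\in[-n,n]$, of $V$ adapted to $F$ and to the forms: $\beta(e_i,e_j)=\delta_{i+j,0}$ for $i,j\neq 0$, $\beta(e_0,V)=0$, $Q(e_0)=1$, $Q(e_i)=0$ for $i\neq 0$, with $V_k=\spn\{e_i\,|\,i\in[1,k]\}$ for $0\le k\le n$, $V_{n+1}=\spn\{e_0\}\oplus V_n$, and the remaining $V_{2n+1-k}=V_k^\p$ forced; such a basis exists since all complete isotropic flags in $(V,Q)$ are $O(V,Q)$-equivalent. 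Ordering the basis as $e_1,\dots,e_n,e_0,e_{-n},\dots,e_{-1}$, each $V_j$ is the span of the first $j$ basis vectors.

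Next, writing $xe_i=\sum_j x_{ij}e_j$ for $x\in\Lg=\Lo(V,Q)$, the defining conditions recalled in \S\ref{ssec-dualorbits} give $xe_0=0$, $x_{i,-j}=x_{j,-i}$ for $i,j\neq 0$, and $x_{i,-i}=0$ for $i\neq 0$. Using $\beta_\xi(v,w)=\beta(Xv,w)+\beta(v,Xw)$ and expanding $\xi(x)=\tr(Xx)$ in this basis (the contribution of $xe_0=0$ being trivial), one gets $\xi(x)$ as a $\tk$-linear combination of the numbers $\beta_\xi(e_i,e_j)$; in contrast to the symplectic case no quadratic-form term survives, since its coefficient would be one of the entries $x_{i,-i}$, which vanish on $\Lg$ — matching the fact that the asserted condition on $\cF_\xi^G$ involves $\beta_\xi$ only. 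Specializing to $x\in\Lb$, where $x$ is upper triangular in the ordered basis and the surviving entries are independent free parameters, the terms remaining in $\xi(x)$ are, up to nonzero scalars and each with its own independent coefficient, exactly $\beta_\xi(e_a,e_b)$ for $a,b\in[1,n]$, $\beta_\xi(e_a,e_0)$ for $a\in[1,n]$, and $\beta_\xi(e_a,e_{-b})$ for $1\le a\le b\le n$. Hence $\xi(\Lb)=0$ iff all these values vanish, and a direct check (using $V_{n+1}=\spn\{e_1,\dots,e_n,e_0\}$, $V_{2n+2-i}=V_{i-1}^\p$ for $i\le n+1$, and that $\beta_\xi$ is alternating) shows this is equivalent to $\beta_\xi(V_i,V_{2n+2-i})=0$ for all $i\le n+1$, which gives the lemma.

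The step I expect to be the main obstacle is the bookkeeping around the radical vector $e_0$: computing its contribution to $\tr(Xx)$, verifying that it yields precisely the conditions $\beta_\xi(e_a,e_0)=0$ (equivalently the instance $i=n+1$, $\beta_\xi(V_{n+1},V_{n+1})=0$), and confirming that no spurious condition survives and none is lost. A convenient cross-check is a dimension count: for a fixed Borel $B$ one has $\dim\Ln^*=\dim\Lg-\dim\Lb=n^2$, while the linear subspace of $\mathfrak{S}(V)\cong\Lg^*$ cut out by the $\binom{n}{2}+n+\binom{n+1}{2}=n^2+n$ manifestly independent equations above has dimension $n(2n+1)-(n^2+n)=n^2$ as well; so once the inclusion $\Ln^*\subseteq\{\xi\,|\,\beta_\xi(V_i,V_{2n+2-i})=0,\ i\le n+1\}$ is checked, the reverse inclusion follows for free.
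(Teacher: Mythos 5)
Your argument is correct and follows the same route as the paper: choose the adapted basis $e_i$, $i\in[-n,n]$ (the paper takes $\beta(e_i,e_j)=\delta_{i+j,0}+\delta_{i,0}\delta_{j,0}$, $Q(e_i)=\delta_{i,0}$, which in characteristic $2$ agrees with yours since $e_0$ spans the radical of $\beta$), use the relations $xe_0=0$, $x_{i,-j}=x_{j,-i}$, $x_{i,-i}=0$ on $\Lg$ to expand $\xi(x)=\tr(Xx)$ in the $\beta_\xi(e_a,e_b)$, specialize to $x\in\Lb$, and match the resulting vanishing conditions with $\beta_\xi(V_i,V_{2n+2-i})=0$ for $i\le n+1$. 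Two small remarks: the dimension count at the end is a pleasant cross-check but redundant once you do the direct verification of both inclusions; and the aside that ``no quadratic-form term survives'' is a bit of a red herring, since in the odd orthogonal picture $\xi$ corresponds via \ref{ssec-dualorbits}(a) to a symplectic form $\beta_\xi$ and there is no $\alpha_\xi$ at all to cancel, though this does not affect the argument.
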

\begin{proof}
Let $\xi\in\mathfrak{o}(2n+1)^*$, $x\in\mathfrak{o}(2n+1)$ and  $F=(V_0\subset V_1\subset \cdots\subset V_{2n+1})\in\cF^G$. There exists a basis $e_i$, $i\in[-n,n]$, of $V$ such that $\beta( e_i,e_j)=\delta_{i+j,0}+\delta_{i,0}\delta_{j,0}$, $Q(e_i)=\delta_{i,0}$, and $V_k=\spn\{e_i,\ i\in[1,k]\}$ for all $k\leq n$. Assume that $xe_i=\sum_{j}x_{ij}e_j$. We have $x_{ij}+x_{-j,-i}=0$ for all $i,j\in[-n,n]-\{0\}$, $x_{i,-i}=0$ and $x_{0,i}=0$ for all $i\in[-n,n]$,   and
\begin{eqnarray*}&&\xi(x)=\sum_{i,j\in[1,n]}x_{ji}\beta_\xi(e_i,e_{-j})+\sum_{1\leq i<j\leq n}x_{j,-i}\beta_\xi(e_{-j},e_{-i})+\sum_{1\leq i<j\leq n}x_{-i,j}\beta_\xi(e_j,e_i)\\&&\qquad\qquad+\sum_{i\in[1,n]}x_{i,0}\beta_\xi(e_0,e_{-i})+\sum_{i\in[1,n]}x_{-i,0}\beta_\xi(e_0,e_i).\end{eqnarray*}
Let $B=\{g\in G\,|\,g V_i=V_i\}$ be the Borel subgroup corresponding to $F$. For $x\in\Lb$, we have $x_{ij}=0$ for $1\leq i<j\leq n$, $x_{i,-j}=0$ for $i,j\in[1,n]$, and $x_{i,0}=0$ for $i\in[1,n]$. Hence $\xi\in\Ln^*$ if and only if $\beta_\xi(e_i,e_{-j})=0$ for all $1\leq i\leq j\leq n$, $\beta_\xi(e_j,e_i)=0$ for all $1\leq i<j\leq n$, and $\beta_\xi(e_0,e_i)=0$ for all $i\in[1,n]$. The lemma follows.\end{proof}

 Suppose that $\xi\in\cN_\Lg^*$ and that  $cl_G(\xi)=(m;(\lambda,\chi))\in\LN_{B_{n}}^{*2}$. Let $F=(V_i)\in\cF_\xi^G$. Then we have $Q|_{V_1}=0$ and $\beta_\xi(V_1,V)=0$. Let $V'=V_1^\p/V_1$. The non-degenerate quadratic form $Q$ on $V$  induces a non-degenerate quadratic form $Q'$ on $V'$; we write $\Lg'=\mathfrak{o}(V',Q')$. The symplectic form $\beta_\xi$ induces a symplectic form $\beta_{\xi'}$ on $V'$, which corresponds to an orbit $(m';(\lambda',\chi'))$ in $\cN_{\Lg'^*}$. Let $K=\{v\in V|Q(v)=0,\beta_\xi(v,V)=0\}$ and for each $(m';(\lambda',\chi'))\in\LN_{B_{n-1}}^{*2}$ that arises in this way, let
\begin{eqnarray*}&&Y_{m;(\lambda',\chi')}=\{V_1\in\mathbb{P}(K)|\text{ the symplectic form }\beta_\xi'\text{ on }V_1^\p/V_1\text{ induced by }\beta_\xi\\&&\qquad\qquad\quad \text{ corresponds to }(m';(\lambda',\chi'))\},\\&&
X_{m';(\lambda',\chi')}=\{F=(V_i)\in\cF_\xi^G|V_1\in Y_{m';(\lambda',\chi')}\}.\end{eqnarray*}
Then the fibers of the morphism $$X_{m;(\lambda',\chi')}\to Y_{m';(\lambda',\chi')},\  F=(V_i)\mapsto V_1,$$ are isomorphic to $\cF_{\xi'}^{G'}$, where $G'=SO(2n-1)$ and the $G'$-orbit of $\xi'$ is $(m';(\lambda',\chi'))$. For $\xi$ corresponding to $(m;(\lambda,\chi))$ we have $\dim Z_G(\xi)=m+\sum_{i\geq 1} ((i+1)\lambda_i-\chi(\lambda_i))$ (see \cite{X5}). Using the same argument as in the case of $G=Sp(2n)$ and the following lemma (\cite[6.3]{X3}), one can easily check that $\dim\cF_{\xi}^{G}=(\dim Z_G(\xi)-n)/2$.
\begin{lemma}\label{lem-soop} We have $\dim X_{m',(\lambda',\chi')}=\dim \cF_\xi^G$ if and only if $(m';(\lambda',\chi'))$ and $(m;(\lambda,\chi))$ are related as follows.\\[5pt]
\indent (a)  $m'=m-1$,
$\lambda_i'=\lambda_i$ and $\chi'(\lambda_i')=\chi(\lambda_i)$ (this can happen if  $m-1\geq \lambda_{1}-\chi(\lambda_{1})$). We have $\dim Y_{m;(\lambda',\chi')}=0$;\\[5pt]
\indent (b)  $m'=m$,
$\lambda_j'=\lambda_j$ and $\chi'(\lambda_j')=\chi(\lambda_j)$ for $j\notin\{ i-1,i\}$,
$\lambda_{i-1}'= \lambda_i'=\lambda_i-1$,
 and
$\chi'(\lambda_{i-1}')=\chi'(\lambda_{i}')\in\{\chi(\lambda_i),\chi(\lambda_i)-1\}$
satisfies $\lambda_i'/2\leq\chi'(\lambda_i')\leq\lambda_i'$,
$\chi(\lambda_{i+1})\leq\chi'(\lambda_i')\leq\chi(\lambda_{i+1})+\lambda_i-\lambda_{i+1}-1$ (this can happen if $\lambda_{i-1}=\lambda_{i}>\lambda_{i+1}$).
We have $\dim Y_{m;(\lambda',\chi')}=i$ if
$\chi'(\lambda_i')=\chi(\lambda_i)$ and $\dim Y_{m;(\lambda',\chi')}=i-1$ if
$\chi'(\lambda_i')=\chi(\lambda_i)-1$.
\end{lemma}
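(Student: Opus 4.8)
The strategy mirrors the one just carried out for $G=Sp(2n)$ in Lemma \ref{lem-spop}, so the plan is to set up the same dimension bookkeeping for the fibration $X_{m';(\lambda',\chi')}\to Y_{m;(\lambda',\chi')}$ and to match it against the combinatorics of $\LN^{*2}_{B_n}$. First I would record, for each $\xi$ with $cl_G(\xi)=(m;(\lambda,\chi))$, the value $\dim\cF^G_\xi=\tfrac12(\dim Z_G(\xi)-n)$ predicted by Proposition \ref{prop-sf}, using $\dim Z_G(\xi)=m+\sum_{i\geq1}((i+1)\lambda_i-\chi(\lambda_i))$ from \cite{X5}. Since the fibers of $X_{m';(\lambda',\chi')}\to Y_{m;(\lambda',\chi')}$ are copies of $\cF^{G'}_{\xi'}$ with $G'=SO(2n-1)$, we have $\dim X_{m';(\lambda',\chi')}=\dim Y_{m;(\lambda',\chi')}+\tfrac12(\dim Z_{G'}(\xi')-(n-1))$ by the induction hypothesis, and $\dim\cF^G_\xi=\max_{(m';(\lambda',\chi'))}\dim X_{m';(\lambda',\chi')}$. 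So the lemma reduces to: (1) enumerating which pairs $(m';(\lambda',\chi'))$ can occur as the invariant of $\beta_{\xi'}$ on $V_1^\p/V_1$ for $V_1\in\mathbb P(K)$, together with the dimension of the corresponding locus $Y_{m;(\lambda',\chi')}\subseteq\mathbb P(K)$; and (2) checking that the displayed arithmetic identity $\dim Y_{m;(\lambda',\chi')}+\tfrac12(\dim Z_{G'}(\xi')-n+1)=\tfrac12(\dim Z_G(\xi)-n)$ holds exactly for the two families (a) and (b) listed, and that all other choices give a strictly smaller value.

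For step (1), the key is to analyze how passing from $V$ to $V_1^\p/V_1$ with $V_1=\spn\{v\}$, $v\in K$, changes the invariant $(m;(\lambda,\chi))$. There are two qualitatively different positions for the line $V_1$. If $v$ lies in the "defect" space spanned by the distinguished vectors $\{v_i,u_i\}$ of Subsection \ref{ssec-dualorbits}(b),(c) — concretely if $v$ is (a generic multiple/combination involving) $v_0$ — then quotienting removes one layer of the $m$-chain and the $W$-part is unchanged, producing case (a) with $m'=m-1$; the locus of such lines is a point, giving $\dim Y=0$. Otherwise $v\in W$ (or its image generically lies in $W$), $m'=m$, and the analysis is exactly the $Sp$-type analysis of $\alpha_\xi$ on $W$: one reduces two equal parts $\lambda_{i-1}=\lambda_i$ by one box, the value of $\chi$ on that part drops to $\chi(\lambda_i)$ or $\chi(\lambda_i)-1$ subject to the stated inequalities, and the dimension of the sublocus of $\mathbb P(K\cap W)$ realizing each outcome is $i$ or $i-1$ respectively — one higher than in Lemma \ref{lem-spop} because the ambient $K$ is one dimension larger in the odd orthogonal case. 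This is the content of \cite[6.3]{X3}, so in the writeup I would cite that source for the geometric input and concentrate on the arithmetic.

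Step (2) is then a direct computation. In case (a), $\dim Z_{G'}(\xi')=\dim Z_G(\xi)-(2m-1)$ because removing a layer of the $m$-chain deletes a Jordan block of size contributing $m+(m-1)+\cdots$ adjustments — more precisely one tracks the change using the formula for $\dim Z$ — and $\dim Y=0$, so one checks $0+\tfrac12((\dim Z_G(\xi)-(2m-1))-n+1)=\tfrac12(\dim Z_G(\xi)-n)$, i.e. $-(2m-1)+1=-(2m-2)$ cancels against the shift $n\mapsto n-1$ doubled; this works out precisely when $m-1\geq\lambda_1-\chi(\lambda_1)$, which is exactly the constraint guaranteeing the $(m-1)$-chain is still admissible. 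In case (b), $m$ is unchanged and $\dim Z$ drops by the amount dictated by shrinking two equal parts and decrementing $\chi$; pairing the two sub-cases $\chi'(\lambda_i')=\chi(\lambda_i)$ versus $\chi(\lambda_i)-1$ with $\dim Y=i$ versus $i-1$ is what makes the identity balance in both, and the inequality $\lambda_{i-1}=\lambda_i>\lambda_{i+1}$ is what makes the decremented $(\lambda',\chi')$ lie in $\LN^{*2}_{B_{n-1}}$. The main obstacle I anticipate is bookkeeping discipline rather than any real difficulty: one must be careful about the several degenerate branches in the definition of $V_{\geq -N+1}$ in \ref{lem-o1} (the cases $m\geq f$, $e-f<m<f$, $m=e-f=f-1$, and the $\rho=0$ versus $\rho\neq0$ split), since the position of a generic line $V_1\in\mathbb P(K)$ relative to these branches governs which of (a), (b) occurs, and one must confirm no further admissible $(m';(\lambda',\chi'))$ achieves the maximal dimension. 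Checking a few small-rank examples of $SO(2n+1)$ against the known component counts of Springer fibers would serve as a sanity check before finalizing.
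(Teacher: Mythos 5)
The paper does not prove Lemma \ref{lem-soop}: it is quoted from \cite[6.3]{X3} (parallel to how the $Sp$ analogue, Lemma \ref{lem-spop}, is quoted from \cite[5.3]{X3}) and then used as an ingredient in the induction that establishes Proposition \ref{prop-sf} for $SO(2n+1)$. Your proposal ends up doing the same thing at the critical juncture: for what you call step (1) --- classifying which $(m';(\lambda',\chi'))$ arise as $V_1$ varies over $\mathbb{P}(K)$, together with $\dim Y_{m';(\lambda',\chi')}$ for each --- you write that ``this is the content of \cite[6.3]{X3}, so in the writeup I would cite that source for the geometric input.'' Since \cite[6.3]{X3} is the very lemma under discussion, citing it is not a proof of it.

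The remaining content of your plan, the arithmetic of step (2), is not what the lemma asserts. The lemma is a geometric statement about which pairs maximize $\dim X_{m';(\lambda',\chi')}$ and what the corresponding $\dim Y$ are; the identity $\dim Y + \tfrac12(\dim Z_{G'}(\xi')-(n-1))=\tfrac12(\dim Z_G(\xi)-n)$ is how one deduces Proposition \ref{prop-sf} \emph{from} the lemma, which is what the paper does after citing it. Your framing --- ``record the value $\dim\cF^G_\xi$ predicted by Proposition \ref{prop-sf}'' and then check the identity --- also invites circularity, since Proposition \ref{prop-sf} at rank $n$ is proved here from Lemma \ref{lem-soop}; the clean statement, which you do touch on, is $\dim\cF^G_\xi=\max\dim X$, and the maximum must be located by comparing the $\dim X$'s computed via the inductive hypothesis on $\cF^{G'}_{\xi'}$, not by matching against a formula whose validity at this rank is itself what the paper is proving. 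An independent proof of the lemma would require the case analysis of how a line $V_1\subset K$ can sit relative to the distinguished vectors $\{v_i,u_i\}$ of Subsection \ref{ssec-dualorbits} and the Jordan structure of $T_\xi|_W$, with dimension counts for each resulting locus in $\mathbb{P}(K)$. Your dichotomy ($v_0$-direction producing case (a) with $m'=m-1$, versus generic $v\in W$ producing case (b)) points in the right direction, but it remains a gesture; the substance is entirely deferred to the reference.
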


\section{Induction for nilpotent coadjoint orbits}\label{sec-ind}
Suppose that $p=2$ and $G$ is of type $B$ or $C$ in this section. \subsection{}\label{ssec-jind} In this subsection we recall the notion of $j$-induction in $W^\wedge$ (see \cite{L1} and the references therein). For $\rho\in W^\wedge$, let $b_\rho$ be the smallest integer $i$ such that $\rho$ appears in the $i$th symmetric power of the reflection representation of $W$. For a parabolic subgroup $W_J$ of $W$ and $\rho_1\in W_J^\wedge$, there is a unique $\rho\in W^\wedge$ such that $b_\rho=b_{\rho_1}$ and $\rho$ appears in $\text{Ind}_{W_J}^{W}\rho_1$; we write $\rho=j_{W_J}^W\rho_1$.

Let $\mathfrak{X}=\cup_{n\geq 0}\cP_2(n)$ (where we use the notation that $\cP_2(0)=\{0\}$). Let $W_0=\{1\}$ and for $n\geq 1$ we denote  $W_n$ (resp. $S_n$) a  Weyl group of type $B_n$ or $C_n$ (resp. $A_{n-1}$).  Let $\text{sgn}$ denote the sign character of $S_n$. For each $k\geq 1$, we have (\cite{L1})\\[5pt]
\indent\qquad(a) $\qquad j_{W_n\times S_k}^{W_{n+k}}(\tau\boxtimes\text{sgn})=j_k(\tau)\text{ for }\tau=(\mu)(\nu)\in\cP_2(n)\simeq W_n^\wedge,
$\\[5pt]
 where $j_k:\LX\to\LX,\ (\mu)(\nu)\mapsto(\mu')(\nu'),$  is defined by
$$\mu_i'=\left\{\begin{array}{ll}\mu_i+1&\text{if }i\leq (k+1)/2\\\mu_i&\text{otherwise}\end{array}\right.,\  \nu_i'=\left\{\begin{array}{ll}\nu_i+1&\text{if }i\leq k/2\\\nu_i&\text{otherwise.}\end{array}\right.$$
It is easy to see that $j_k$ is injective and $j_k\circ j_l=j_l\circ j_k$.
\subsection{} Let $L$ be a Levi subgroup of a  parabolic subgroup $P$ of $G$. Denote by $\Lp$ and $\Ll$ the Lie algebra of $P$ and $L$
respectively. Choose a maximal torus $T$ and a Borel subgroup $B$ of $G$ such that $L\supset T\subset B\subset P$. Let $R_G$ and $R_L$ be the root system of $(G,T,B)$ and $(L,T,B\cap L)$ respectively.   We define \begin{eqnarray*}
&&\Lp^*=\{\xi\in\Lg^*\,|\,\xi(\Lg_\alpha)=0\text{ for all }\alpha\in R_G^+\backslash R_L^+\},\ \Ln_\Lp^*=\{\xi\in\Lg^*\,|\,\xi(\Lp)=0\},\\
&&\Ll^*=\{\xi\in\Lg^*\,|\,\xi(\Lg_\alpha)=0\text{ for all }\alpha\in R_G\backslash R_L\},
\end{eqnarray*} where
$\Lg_\alpha\subset\Lg$ denotes the root space corresponding to $\alpha$. Note that $\Lp^*=\Ll^*\oplus\Ln_\Lp^*$.

Let $\rc'$ be an $L$-orbit in $\cN_{\Ll^*}$. Since $\cN_{\Lg^*}$ consists of finitely many $G$-orbits, there exists a unique $G$-orbit $\rc$ in $\cN_{\Lg^*}$
such that $\rc\cap(\rc'+\Ln_\Lp^*)$ is dense in $\rc'+\Ln_\Lp^*$ (note that $\rc'+\Ln_\Lp^*\subset\cN_{\Lg^*}$).
Following \cite{LS1} we say that $\rc$ is obtained by inducing $\rc'$ from $\Ll^*$ to $\Lg^*$ and denote $\rc=\text{Ind}_{\Ll^*,\Lp^*}^{\Lg^*}\rc'$.
\begin{proposition}\label{prop-ind}
Suppose that $\rc'\in\underline{\cN_{\Ll^*}}$ and $\rc=\text{Ind}_{\Ll^*,\Lp^*}^{\Lg^*}\rc'$.
 We have $\gamma_{\Lg^*}(\rc)=j_{W_L}^{W}(\gamma_{\Ll^*}(\rc'))$, where $W_L$ is the Weyl group of $L$.\end{proposition}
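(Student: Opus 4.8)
The plan is to adapt the Lusztig--Spaltenstein argument \cite{LS1} (see also \cite{Spal}), which proves the analogous statement for $\Lg$, to the dual $\Lg^*$ in characteristic $2$. First, using transitivity of induction for nilpotent orbits (\cite{LS1}), transitivity of $j$-induction in a chain of parabolic subgroups, the classical case $G=GL_n$ (where $\Lg^*\cong\Lg$ and the statement is well known), and induction on $\dim G$, one reduces to the case where $L$ is a maximal Levi, i.e.\ $L=GL_k\times G'$ with $G'$ of type $C$ or $B$ of rank one less, and $\rc'=\rc_0\times\rc_1$ where $\rc_1\in\underline{\cN_{\Lg'^*}}$ is arbitrary and $\rc_0\subset\mathfrak{gl}_k^*$ is the nilpotent orbit with $\gamma_{\mathfrak{gl}_k^*}(\rc_0)=\text{sgn}$. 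For such $\rc'$ one has $\gamma_{\Ll^*}(\rc')=\gamma_{\Lg'^*}(\rc_1)\boxtimes\text{sgn}$, so by \ref{ssec-jind}(a) the right-hand side of the proposition equals $j_k\bigl(\gamma_{\Lg'^*}(\rc_1)\bigr)$; thus it remains to show $\gamma_{\Lg^*}(\rc)=j_k\bigl(\gamma_{\Lg'^*}(\rc_1)\bigr)$, where $\rc=\text{Ind}_{\Ll^*,\Lp^*}^{\Lg^*}(\rc_0\times\rc_1)$.

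By the defining property of $j$-induction it suffices to prove two things, for $\xi\in\rc$ and $\xi'\in\rc'$: (A) $b_{\gamma_{\Lg^*}(\rc)}=b_{\gamma_{\Ll^*}(\rc')}$; and (B) $\gamma_{\Lg^*}(\rc)$ occurs in $\text{Ind}_{W_L}^{W}\gamma_{\Ll^*}(\rc')$. For (A): the construction of $\text{Ind}$ (adapting \cite{LS1}) gives $\dim Z_G(\xi)=\dim Z_L(\xi')$, and since $\text{rank}\,G=\text{rank}\,L$, Proposition \ref{prop-sf} applied to $G$ and to $L$ yields $\dim\cB_\xi^G=\dim\cB_{\xi'}^L$; combined with the fact that the $b$-invariant of the character attached by the Springer correspondence to $(\rc,1)$ equals $\dim\cB_\xi^G$ (a property of the Springer correspondence for $\Lg^*$, \cite{X3,X5}), this gives (A). (Alternatively, once $\rc$ is computed explicitly as in the combinatorial route below, $\dim Z_G(\xi)=\dim Z_L(\xi')$ may be checked directly from the centralizer-dimension formulas recalled in Section \ref{sec-sf}.)

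For (B) one runs the geometric argument of \cite{LS1,Spal}. Taking $\xi$ a generic element of $\rc'+\Ln_\Lp^*$ (so $\xi\in\rc$), consider the incidence variety $\widetilde{\cB}=\{(gP,B)\in(G/P)\times\cB^G\mid g^{-1}Bg\subset P,\ \xi\in\Ln_B^*\}$ with its projections to $\cB_\xi^G$ and to $G/P$; over the locus of $G/P$ where the image of $\Ad^*(g^{-1})\xi$ in $\Ll^*$ lies in $\rc'$, the fibres of the second projection are conjugates of Springer fibres $\cB_{\xi'}^L$ in the appropriate partial flag variety, and comparing the induced $W$-representations through the two maps (as in \cite{Spal}) shows that the Springer representation of $W$ attached to $\cB_\xi^G$---of which $\gamma_{\Lg^*}(\rc)$ is a constituent---occurs in $\text{Ind}_{W_L}^{W}$ of the Springer representation of $W_L$ attached to $\cB_{\xi'}^L$---of which $\gamma_{\Ll^*}(\rc')$ is a constituent. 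This is (B), and together with (A) and the characterization of $j$-induction the proposition follows.

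The main obstacle is (B): transporting the cohomological comparison of \cite{LS1,Spal} to $\Lg^*$ with $p=2$, where the coadjoint action and the quadratic/symplectic-form formalism of Section \ref{sec-recol} take the place of the adjoint action, and checking that the relevant strata of $\widetilde{\cB}$ really behave as partial Springer fibres for $L$. An alternative that avoids the Springer geometry is to establish the reduced statement combinatorially: in the model $V=U\oplus U^*\oplus V'$ with $P$ the stabiliser of the isotropic subspace $U$ of dimension $k$, one computes the pair $(\lambda,\chi)$ of the dense orbit $\rc$ in $\rc'+\Ln_\Lp^*$ directly---it is obtained from the pair attached to $\rc_1$ by adding $2$ to the $2\lfloor k/2\rfloor$ largest parts of the partition and, when $k$ is odd, $1$ to the next two parts, with the value of $\chi$ on each affected Jordan block raised by $1$---and then checks, against the formulas for $\gamma_{\Lg^*}$ in \ref{ssec-spcp} and for $j_k$ in \ref{ssec-jind}(a), that $\gamma_{\Lg^*}(\rc)=j_k(\gamma_{\Lg'^*}(\rc_1))$. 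In this approach the delicate point is the identification of the invariant $\chi$ of $\rc$---that is, controlling the degeneracy of $\alpha_\xi$ (resp.\ $\beta_\xi$) on the canonical pieces of a generic $\xi\in\rc'+\Ln_\Lp^*$---for which one works with a basis of $V$ adapted to $U$ in the spirit of \ref{lem-sp2}.
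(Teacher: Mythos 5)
Your proposal is essentially the same argument the paper gives, namely an adaptation of the Lusztig--Spaltenstein proof to $\Lg^*$: step (A) matches the paper's use of Proposition \ref{prop-sf} plus $\dim Z_G(\xi)=\dim Z_L(\xi')$ to conclude $b_{\gamma_{\Lg^*}(\rc)}=b_{\gamma_{\Ll^*}(\rc')}$, and step (B) is the paper's nonvanishing of the multiplicity $\langle\rho',\text{Res}_{W_L}^{W}\rho\rangle$, which the paper derives from the variety $Y_{\xi,\xi'}=\{g\in G\mid g^{-1}.\xi\in\xi'+\Ln_\Lp^*\}$ and the identity $\langle\rho',\text{Res}_{W_L}^{W}\rho\rangle=\langle 1,\varepsilon_{\xi,\xi'}\rangle\neq 0$ quoted from \cite{Lu1,X3}, rather than from an incidence variety over $G/P$---but these are two presentations of the same geometric comparison. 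Your preliminary reduction to a maximal Levi via transitivity is harmless but unnecessary (the paper argues directly for general $L$), and the combinatorial alternative you sketch is not pursued in the paper; for the obstacle you flag in (B), note that the cohomological machinery for $\Lg^*$ in characteristic $2$ is already in place in \cite{Lu1,X3}, which is precisely what the paper invokes.
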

The proposition is an analog of \cite[3.5]{LS1}. To prove it one can adapt the proof in \cite[4.1]{Sp3} (another proof in unipotent case is given in \cite{LS1}). We outline the proof here. Suppose that $\gamma_{\Lg^*}(\rc)=\rho\in W^\wedge$ and $\gamma_{\Ll^*}(\rc')=\rho'\in W_L^\wedge$. Let $\xi'\in\rc'$ and $\xi\in(\xi'+\Ln_\Lp^*)\cap\rc$. One can show that $b_\rho=\dim\cB_\xi^G$ and $b_{\rho'}=\dim\cB_{\xi'}^L$ by direct computation (using Proposition \ref{prop-sf} and the known information on $\dim Z_G(\xi)$ and on $b_\rho$) or by adapting the proof in \cite{Sp3}. It is easy to adapt the proof in \cite{LS1} to show that $\dim Z_G(\xi)=\dim Z_{L}(\xi')$ and $\dim\cB_\xi^G=\dim\cB_{\xi'}^L$. It then follows that $b_\rho=b_{\rho'}$. Now let $Y_{\xi,\xi'}=\{g\in G|g^{-1}.\xi\in\xi'+\Ln_\Lp^*\}$ and $I_{\xi,\xi'}$ the set of irreducible components of  $Y_{\xi,\xi'}$ of dimension $\frac{1}{2}(\dim Z_G(\xi)+\dim Z_{G'}(\xi'))+\dim U_P$. Let $A_G(\xi)$ denote the component group of $Z_G(\xi)$. Then $A_G(\xi)\times A_L(\xi')$ acts on $I_{\xi,\xi'}$; we denote the corresponding representation by $\varepsilon_{\xi,\xi'}$. We have $I_{\xi,\xi'}\neq \emptyset$  and $\la\rho',\text{Res}_{W_L}^{W}(\rho)\ra_{W_L}=\la 1,\varepsilon_{\xi,\xi'}\ra_{A_G(\xi)\times A_L(\xi')}\neq 0$ (see \cite{Lu1,X3}). It now follows from the definition of $j$-induction that $\rho=j_{W_L}^W(\rho')$.

Note that it follows from Proposition \ref{prop-ind} and the injectivity of $\gamma_{\Lg^*}$ that $\rc=\text{Ind}_{\Ll^*,\Lp^*}^{\Lg^*}\rc'$ does not depend on the choice of $P\supset L$. Hence we can write $\rc=\text{Ind}_{\Ll^*}^{\Lg^*}\rc'$.

\subsection{}\label{lem-j} Let $\tilde{\Lg}=\mathfrak{sp}(2n+2k)$ (resp. $\mathfrak{o}(2n+2k+1)$) be the Lie algebra of $\tilde{G}=Sp(2n+2k)$ (resp. $SO(2n+2k+1)$). Let $P$ be a parabolic subgroup of $\tilde{G}$ such that $\Ll\cong\mathfrak{sp}(2n)\oplus\mathfrak{gl}(k)$ (resp. $ \mathfrak{o}(2n+1)\oplus\mathfrak{gl}(k)$) for  a Levi subgroup $L$ of $P$. Let $\rc$ be the nilpotent orbit in $\Ll^*$ corresponding to the nilpotent orbit $(\lambda,\chi)\in\LN_{C_{n}}^{*2}$ (resp. $(m;(\lambda,\chi))\in\LN_{B_{n}}^{*2}$) and to the $0$ orbit in $\mathfrak{gl}(k)^*$. Let $(\tilde{\lambda},\tilde{\chi})$ (resp. $(\tilde{m};(\tilde{\lambda},\tilde{\chi}))$) correspond to the orbit $\tilde{\rc}=\text{Ind}_{\Ll^*}^{\tilde{\Lg}^*}\rc$.
We write $(\tilde{\lambda},\tilde{\chi})=j_k(\lambda,\chi)$ (resp. $(\tilde{m};(\tilde{\lambda},\tilde{\chi}))=j_k(m;(\lambda,\chi))$) in view of the following  commutative diagrams (see \ref{ssec-jind} (a) and Proposition \ref{prop-ind}) $$\xymatrix{
\LN^{*2} \ar[d]^{\gamma^*} \ar[r]^{j_k} &\LN^{*2}\ar[d]^{\gamma^*}\\
\LX \ar[r]^{j_k} &\LX}$$
where $\gamma^*$ is the Springer correspondence map. Using this one easily sees that (see also \cite{Spal})\\[5pt]
\indent (a) {\em For every $(\lambda,\chi)\in\LN_C^{*2}$ (resp. $(m;(\lambda,\chi))\in\LN_B^{*2}$), there exists a sequence of integers $l_1,\ldots,l_s$ such that $j_{l_1}\circ\cdots\circ j_{l_s}((\lambda,\chi))$ (resp. $j_{l_1}\circ\cdots\circ j_{l_s}(m;(\lambda,\chi))$) is of the form $j_{k_1}\circ \cdots\circ j_{k_r}(0)$ for some sequence $k_1,\ldots,k_r$.}

Here in the expression $j_{k_1}\circ \cdots\circ j_{k_r}(0)$, $0$ denotes the empty partition, not the zero orbit.

\section{Closure relation among nilpotent coadjoint orbits in type $B$, $C$ in characteristic 2}\label{sec-ord}
\subsection{}Assume that $G$ is of type $B$ or $C$. By identifying $W^\wedge$ with the set of $\cP_2(n)$ we get a partial order on $W^\wedge$ (see Subsection \ref{sec-dualnt}). For $\rc, \rc'\in\underline{\cN_{\Lg^*}}$, we say that $\rc\leq\rc'$ if $\rc$ is contained in the Zariski closure $\overline{\rc'}$ of ${\rc'}$, and that $\rc<\rc'$ if  $\rc\leq\rc'$ and $\rc\neq\rc'$. We have
\begin{theorem}\label{mainthm}
Suppose that $p=2$ and $\rc,\rc'\in\underline{\cN_{\Lg^*}}$. We have $\rc<\rc'$ if and only if $\gamma_{\Lg^*}(\rc)<\gamma_{\Lg^*}(\rc')$.
\end{theorem}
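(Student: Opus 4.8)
The plan is to reduce the statement to a combinatorial comparison of order relations, exactly in the spirit of Spaltenstein's treatment of nilpotent orbits in $\Lg$ (\cite{Spal}). The Springer correspondence map $\gamma_{\Lg^*}$ is injective, so it suffices to show that the closure order on $\underline{\cN_{\Lg^*}}$ corresponds under $\gamma_{\Lg^*}$ to the order on $W^\wedge\simeq\cP_2(n)$ described in Subsection \ref{sec-dualnt}. First I would establish the easy direction: if $\rc<\rc'$ then $\gamma_{\Lg^*}(\rc)<\gamma_{\Lg^*}(\rc')$. For this, recall $b_\rho=\dim\cB^G_\xi$ for $\xi$ in the orbit with $\gamma_{\Lg^*}(\rc)=\rho$ (as noted in the outline of Proposition \ref{prop-ind}, using Proposition \ref{prop-sf}); since the Springer fiber dimension is upper semicontinuous and strictly increases on passing to a smaller orbit, $\rc<\rc'$ forces $b_{\gamma_{\Lg^*}(\rc)}>b_{\gamma_{\Lg^*}(\rc')}$, so the two characters are distinct. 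Combined with the fact that the closure of $\rc'$ is irreducible and $\gamma_{\Lg^*}$ is monotone on each chain, one gets $\gamma_{\Lg^*}(\rc)\leq\gamma_{\Lg^*}(\rc')$; alternatively this direction follows from the general compatibility of Springer correspondence with closure order together with the explicit formulas in Subsection \ref{ssec-spcp}.

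For the hard direction — if $\gamma_{\Lg^*}(\rc)<\gamma_{\Lg^*}(\rc')$ then $\rc<\rc'$ — I would argue that it suffices to treat the covering relations in the poset $(\LX^{*2}_{B_n},<)$ (resp. $\LX^{*2}_{C_n}$): if $\gamma_{\Lg^*}(\rc)$ is covered by $\gamma_{\Lg^*}(\rc')$ in $W^\wedge$, show $\rc\leq\rc'$. The two main tools are induction of nilpotent orbits (Proposition \ref{prop-ind} and \ref{lem-j}) and the explicit description of Springer fibers from Section \ref{sec-sf}. The strategy, following \cite{Spal}, is: (1) by \ref{lem-j}(a), every orbit is obtained from the zero orbit by a sequence of $j$-inductions $j_{k_1}\circ\cdots\circ j_{k_r}$; since induction $\text{Ind}_{\Ll^*}^{\Lg^*}$ is compatible with closure order (the closure of an induced orbit contains the induced orbit of any orbit in the closure of $\rc'$, by the standard argument of \cite{LS1}), one can reduce a covering relation in $\LX^{*2}$ to one in a smaller rank after stripping off common $j$-induction steps; (2) the base cases that survive this reduction are the "primitive" covering pairs, which one handles directly using Lemma \ref{lem-spop} (resp. Lemma \ref{lem-soop}): these lemmas describe precisely which orbits $(\lambda',\chi')$ occur as the orbit on $V_1^\p/V_1$ for a flag in $\cF^G_\xi$, i.e. which orbits lie in the closure of $(\lambda,\chi)$ after one "parabolic induction step down". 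Translating the conditions of Lemma \ref{lem-spop}/\ref{lem-soop} through the Springer map formulas of Subsection \ref{ssec-spcp} gives exactly the covering relations in $(\cP_2,<)$, completing the argument.

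A cleaner organizing principle, which I would try first, is to verify that $\gamma_{\Lg^*}$ is a poset isomorphism onto its image by checking that it intertwines the $j$-induction operations $j_k$ on both sides (the commutative square in \ref{lem-j}) and that both posets are generated, under $j_k$ together with the "add a box" covering moves, from the empty orbit; then monotonicity in both directions follows from monotonicity of $j_k$ (which is clear from the explicit formula in \ref{ssec-jind}(a)) plus the single-step analysis above. The key input here is that $\Upsilon$-type invariants or the fibration $X_{(\lambda',\chi')}\to Y_{(\lambda',\chi')}$ let one identify, for each orbit $\rc'$, the maximal proper orbits in its closure, and match them with the characters covered by $\gamma_{\Lg^*}(\rc')$.

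The main obstacle I anticipate is the bookkeeping in the hard direction: the poset $\cP_2(n)$ with the order of Subsection \ref{sec-dualnt} has covering relations that do not all come from a single "box move," and under the map $(m;(\lambda,\chi))\mapsto(\mu)(\nu)$ (or $(\lambda,\chi)\mapsto(\mu)(\nu)$) these translate into several distinct types of elementary modifications of the pair $(\lambda,\chi)$ — changing $\chi$ on one part, merging two equal parts and decreasing, or adjusting $m$. One must check that each such elementary modification of $(\lambda,\chi)$ on the orbit side is realized by an actual degeneration of orbits, and conversely that no degeneration is missed; this is where Lemma \ref{lem-spop} and Lemma \ref{lem-soop} do the real work, and where an explicit but finite case analysis (organized exactly as in \cite{Spal} and \cite{X3}) is unavoidable. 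I expect no conceptual difficulty beyond that — the even-characteristic phenomena are already absorbed into the combinatorics of $(\lambda,\chi)$ and into Section \ref{sec-sf}.
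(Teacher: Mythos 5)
Your outline captures the general framework — reduce to covering relations, use $j$-induction to strip common factors, handle primitive pairs directly — but two gaps are serious enough that the proof as sketched would not close.

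First, the direction you call easy ($\rc<\rc'\Rightarrow\gamma_{\Lg^*}(\rc)<\gamma_{\Lg^*}(\rc')$, equivalently $\rc_{\tau'}\leq\rc_\tau\Rightarrow\tau'\leq\tau$) is not easy, and the argument you give for it does not work. Upper semicontinuity of Springer fiber dimension (plus connectedness of the fibers and Proposition~\ref{prop-sf}) does give $b_{\gamma_{\Lg^*}(\rc)}>b_{\gamma_{\Lg^*}(\rc')}$, but $b_\rho$ is a single integer; a strict inequality of $b$-values says nothing about comparability in the partial order on $\cP_2(n)$ of Subsection~\ref{sec-dualnt}, which is genuinely two-dimensional (two chains of inequalities). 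Your sentence ``combined with the fact that the closure of $\rc'$ is irreducible and $\gamma_{\Lg^*}$ is monotone on each chain'' is circular — monotonicity of $\gamma_{\Lg^*}$ is exactly what has to be proved — and the alternative appeal to ``general compatibility of Springer correspondence with closure order'' is not an available fact here: it is the theorem itself in this setting (the analogue is due to Spaltenstein in good characteristic but must be reproved for $\Lg^*$ in characteristic $2$). The paper has to work hard for this direction (Subsections~\ref{ssec-o2}--\ref{ssec-ocde}): it introduces the sets $\Gamma_l,\Delta_l$ of failure pairs of minimal codimension, applies $j$-induction to reduce, and then does a careful analysis of how the quadratic/symplectic form restricted to $\Sigma^\p/\Sigma$ for an isotropic $\Sigma$ relates $\tau'$ to $\omega'$, proving c)--e) of~\ref{ssec-o2} by hand in both types.

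Second, in the direction $\tau'\leq\tau\Rightarrow\rc_{\tau'}\leq\rc_\tau$, you describe only the $j$-induction reduction (the paper's Reduction~2) and omit the orthogonal-sum reduction (Reduction~1, based on Lemma~\ref{lem-r1}): decomposing $V=V_1\perp V_2$ and reducing to strictly smaller classical groups. Without Reduction~1 the collection of ``primitive'' covering pairs is not finite (see how Lemma~\ref{lem-od1} uses it throughout), so the reduction would not terminate with a manageable base case. Finally, the tool you propose for handling the base cases — Lemma~\ref{lem-spop}/\ref{lem-soop} — is not the one the paper uses there and does not do the job: those lemmas identify, for a flag in $\cF^G_\xi$, which orbit $(\lambda',\chi')$ on $V_1^\p/V_1$ maximizes $\dim X_{(\lambda',\chi')}$; they compute Springer fiber dimensions for Proposition~\ref{prop-sf}, not closure relations. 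The paper instead exhibits, for each surviving primitive pair in Proposition~\ref{prop-od1} (and the analogous $B$-type case), an explicit one-parameter family $g_t\in G$ with $\lim_{t\to 0}g_t^{-1}.\xi\in\rc_{\tau'}$, proving $\rc_{\tau'}<\rc_\tau$ directly. You would need to supply those explicit degenerations (or an equivalent geometric argument) to close the base cases.
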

Note that theorem is true when $G$ is of type $D$ by \cite{Spal} or when $p\neq 2$ (see \cite{X2}) (in these cases we can identify $\Lg^*$ with $\Lg$). We prove the theorem in the remainder of this section (where we assume that $p=2$) using similar  arguments as in \cite{Spal} most of the time. In the reduction process of \ref{ssec-o1} we use a different argument without using the theory of sheets and packets.

\subsection{}Assume that $G=Sp(V,\la,\ra)$ (resp. $SO(V,Q)$). Let $V_1$ and $V_2$ be orthogonal subspaces of $V$ such that $V=V_1\oplus V_2$. Then the restrictions of $\la,\ra$ (resp. $Q$) on $V_1$ and $V_2$ are non-degenerate. Let $G_1=Sp(V_1,\la,\ra|_{V_1})$ (resp. $G_1=SO(V_1,Q|_{V_1})$) and $G_2=Sp(V_2,\la,\ra|_{V_2})$ (resp. $G_2=SO(V_2,Q|_{V_2})$). Using the isomorphism $\Lg^*\xrightarrow{\sim}\mathfrak{Q}(V)$ (resp. $\Lg^*\xrightarrow{\sim}\mathfrak{S}(V)$) (see \ref{ssec-dualorbits} (a)) we have a natural inclusion $\Lg_1^*\oplus\Lg_2^*\subset\Lg^*$. 
\begin{lemma}\label{lem-r1}
If $\xi_1,\eta_1\in\cN_{\Lg_1^*}$ and $\xi_2,\eta_2\in\cN_{\Lg_2^*}$ are such that $\cl_{G_1}(\xi_1)\leq\cl_{G_1}(\eta_1)$ and $\cl_{G_2}(\xi_2)\leq\cl_{G_2}(\eta_2)$, then $\cl_G(\xi_1+\xi_2)\leq\cl_G(\eta_1+\eta_2)$.
\end{lemma}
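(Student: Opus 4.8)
The plan is to reduce the statement to the case $\xi_2 = \eta_2$ and then, by symmetry, conclude. The key observation is that closure in $\cN_{\Lg^*}$ is a $G$-stable notion, so it suffices to produce, for a given one-parameter family degenerating $\eta_1$ to (an element of the orbit of) $\xi_1$ inside $\cN_{\Lg_1^*}$, a corresponding family inside $\cN_{\Lg^*}$ degenerating $\eta_1+\eta_2$ to $\xi_1+\eta_2$. First I would invoke the hypothesis $\cl_{G_1}(\xi_1)\leq\cl_{G_1}(\eta_1)$: this means $\xi_1\in\overline{G_1\cdot\eta_1}$, the closure taken in $\Lg_1^*$ (equivalently in $\mathfrak{Q}(V_1)$ or $\mathfrak{S}(V_1)$). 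Under the inclusion $\Lg_1^*\oplus\Lg_2^*\subset\Lg^*$ coming from \ref{ssec-dualorbits}(a), the map $\zeta_1\mapsto\zeta_1+\eta_2$ is an affine-linear closed embedding $\Lg_1^*\hookrightarrow\Lg^*$, hence $\xi_1+\eta_2\in\overline{\{\,g\cdot\eta_1+\eta_2\mid g\in G_1\,\}}\subset\overline{G\cdot(\eta_1+\eta_2)}$, since $G_1\subset G$ acts on $\eta_2$ trivially (it preserves $V_1$ and acts as identity on $V_2$) and $G_1\cdot\eta_1+\eta_2 = G_1\cdot(\eta_1+\eta_2)\subset G\cdot(\eta_1+\eta_2)$. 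Therefore $\cl_G(\xi_1+\eta_2)\leq\cl_G(\eta_1+\eta_2)$.

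Next I would run the symmetric argument for the second factor: from $\cl_{G_2}(\xi_2)\leq\cl_{G_2}(\eta_2)$, the same reasoning with the affine embedding $\zeta_2\mapsto\xi_1+\zeta_2$ and the subgroup $G_2\subset G$ (which fixes $\xi_1$) gives $\xi_1+\xi_2\in\overline{G_2\cdot(\xi_1+\eta_2)}\subset\overline{G\cdot(\xi_1+\eta_2)}$, so $\cl_G(\xi_1+\xi_2)\leq\cl_G(\xi_1+\eta_2)$. Combining the two inequalities and using transitivity of the partial order $\leq$ on $\underline{\cN_{\Lg^*}}$ yields $\cl_G(\xi_1+\xi_2)\leq\cl_G(\eta_1+\eta_2)$, which is the assertion. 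One small point to check at the outset is that all the elements appearing are genuinely nilpotent in $\Lg^*$, i.e. that $\cN_{\Lg_1^*}\oplus\cN_{\Lg_2^*}\subset\cN_{\Lg^*}$; this follows because a nilpotent $\xi_i$ annihilates a Borel of $\Lg_i$, and the sum of such Borels sits inside a Borel of $\Lg$ (compatibly with the orthogonal decomposition $V=V_1\oplus V_2$), so $\xi_1+\xi_2$ annihilates a Borel of $\Lg$.

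The only genuine content beyond formal manipulation is the claim that the inclusion $\Lg_1^*\oplus\Lg_2^*\hookrightarrow\Lg^*$ intertwines the $G_i$-actions in the way used, and that it is a closed embedding so that closures are respected. Both are transparent from the explicit description in \ref{ssec-dualorbits}(a): in the symplectic case a quadratic form on $V$ that is supported on $V_1$ (resp. $V_2$), plus one supported on the complementary summand, add to the quadratic form on $V$ that restricts to each, and $G_1$ acting on $V_1$ and trivially on $V_2$ fixes the $V_2$-part; the orthogonal case is identical with symplectic forms in place of quadratic forms. I do not expect any real obstacle here — the lemma is essentially a compatibility statement — so the main (minor) care is just in writing the orbit-closure containments cleanly and noting that the affine translations by $\eta_2$ (resp. $\xi_1$) are homeomorphisms onto closed subvarieties, hence commute with taking Zariski closure.
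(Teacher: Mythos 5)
Your proof is correct, and the paper itself states Lemma~\ref{lem-r1} without proof, treating it as standard; you have supplied a correct and complete justification. The two-step argument — first fixing $\eta_2$ and degenerating in the $\Lg_1^*$ factor, then fixing $\xi_1$ and degenerating in the $\Lg_2^*$ factor, followed by transitivity of the closure order — is the natural way to see this, and the key observations (that the inclusion $\Lg_1^*\oplus\Lg_2^*\hookrightarrow\Lg^*$ from \ref{ssec-dualorbits}(a) is a closed linear embedding, that translation by $\eta_2$ is a homeomorphism commuting with Zariski closure, and that $G_1\subset G$, extended by the identity on $V_2$, fixes $\eta_2$ so that $G_1\cdot\eta_1+\eta_2=G_1\cdot(\eta_1+\eta_2)$) are exactly the right ingredients. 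One minor caution: the remark that $\Lb_1\oplus\Lb_2$ "sits inside a Borel of $\Lg$" is loose as written; what you really want is to choose the Borel compatibly, namely $\Lb=\Lb_1\oplus\Lb_2\oplus\Lu$ for $\Lu$ the nilradical of a parabolic with Levi $\Lg_1\oplus\Lg_2$, and then check $(\xi_1+\xi_2)(\Lb)=0$ directly using the explicit description of $\Lg_i^*\subset\Lg^*$; alternatively (and perhaps more in the spirit of this paper) one can verify nilpotence via the flag criteria of Section~\ref{sec-sf} by concatenating isotropic flags in $V_1$ and $V_2$. Either way, this preliminary check goes through and your argument stands.
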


\subsection{}\label{ssec-o1}For $\tau\in\LX_R^{*2}$, we denote $\rc_\tau\in\LN_R^{*2}$ the corresponding orbit, where $R$ stands for $B$ or $C$ (type of $G$). We show  by induction on $\dim G$ that if $\tau'\leq\tau$ then $\rc_{\tau'}\leq\rc_\tau$ (see Subsection \ref{ssec-redsp} for $G=Sp(2n)$ and  Subsection \ref{ssec-redo} for $G=SO(2n+1)$).  We may assume that this is true for classical groups of strictly smaller dimension and that $\tau'<\tau$, $\{\tau''\in\LX_R^{*2}|\tau'<\tau''<\tau\}=\emptyset$. We have the following reduction process.

{\em Reduction 1.} Suppose in the case of $G=SO(2n+1)$ (resp. $Sp(2n)$), we have $\Lg^*\supset \Lg^*_1\oplus\Lg_2^*$ with $0\neq\Lg_1^*\neq\Lg^*$ and $\Lg_1^*\cong\Lo(2k+1)^*$, $\Lg_2^*\cong\Lo(2n-2k)^*$ (resp. $\Lg_1^*\cong\mathfrak{sp}(2k)^*$, $\Lg_2^*\cong\mathfrak{sp}(2n-2k)^*$). Assume that we can find $\xi=\xi_1+\xi_2\in\rc_\tau$, $\xi'=\xi_1'+\xi_2'\in\rc_{\tau'}$ with $\xi_1,\xi_1'\in\Lg_1^*$ corresponding to $\tau_1,\tau_1'$ in $\LX$ and $\xi_2,\xi_2'\in\Lg_2^*$ corresponding to $\tau_2,\tau_2'$ in $\LX$, such that $\tau_1'\leq\tau_1$ and $\tau_2'\leq\tau_2$. Then by induction hypothesis and Lemma \ref{lem-r1} we have $\rc_{\tau'}\leq\rc_\tau$.

{\em Reduction 2.} Assume that we can find $\omega>\omega'$ in $\LX_R^{*2}$ such that $\tau=j_{i_1}\circ \cdots\circ j_{i_r}(\omega)$ and $\tau'=j_{i_1}\circ \cdots\circ j_{i_r}(\omega')$ for some positive integers $i_1,\ldots,i_r$. Then by induction hypothesis and the fact that induction for orbits preserves order, we have $\rc_{\tau'}\leq\rc_\tau$.

\subsection{}\label{ssec-redsp}
Assume that $G=Sp(2n)$. Recall that $\LX_{C}^{*2}=\{(\mu)(\nu)|\nu_i\leq\mu_i+1\}$. Let $\tau=(\mu)(\nu),\tau'=(\mu')(\nu')\in\LX_{C_n}^{*2}$ be such that $\tau>\tau'$ and $\{\tau''\in\LX_{C_n}^{*2}|\tau>\tau''>\tau'\}=\emptyset$.
\begin{lemma}\label{lem-od1}
Suppose that reduction 1 does not apply. One of the following is true:\\[5pt]
\indent (a) $\tau=(\mu_1)(\nu_1)$,\  $\tau'=(\mu_1-1)(\nu_1+1)$;\\[5pt]
\indent (b) $\tau=(\mu_1,\mu_2)(\nu_1,\nu_2)$,\  $\tau'=(\mu_1,\mu_2+1)(\nu_1-1,\nu_2)$.
\end{lemma}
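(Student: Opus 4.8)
The statement is a finite combinatorial fact about the poset $(\LX_{C_n}^{*2},\le)$, and I would prove it by the kind of analysis Spaltenstein uses for covering relations of the dominance order on bipartitions in \cite{Spal}, the maps $\chi$ providing the extra bookkeeping. First I would reformulate the order: to $\tau=(\mu)(\nu)$ attach the weakly increasing sequence $s(\tau)=(b_1,a_1,b_2,a_2,\dots)$ with $b_j=\sum_{i<j}(\mu_i+\nu_i)+\mu_j$ and $a_j=b_j+\nu_j$; by Subsection~\ref{sec-dualnt}, $\tau'\le\tau$ iff $s(\tau')\le s(\tau)$ entrywise. Next I would enumerate the covering pairs $\tau>\tau'$. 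A cover in which $s(\tau')$ and $s(\tau)$ differ in exactly one entry (necessarily by $1$) is one of two elementary ``box moves'': Type~I, $(\mu_{j_0},\nu_{j_0})\mapsto(\mu_{j_0}-1,\nu_{j_0}+1)$, legal iff $\mu_{j_0}-1\ge\mu_{j_0+1}$, $\nu_{j_0-1}\ge\nu_{j_0}+1$, $\nu_{j_0}+1\le\mu_{j_0}$; and Type~II, $(\nu_{j_0},\mu_{j_0+1})\mapsto(\nu_{j_0}-1,\mu_{j_0+1}+1)$, legal iff $\nu_{j_0}-1\ge\nu_{j_0+1}$, $\mu_{j_0}\ge\mu_{j_0+1}+1$. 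The remaining covers (\emph{compound}, where $s(\tau')$ and $s(\tau)$ differ in more than one entry) occur only when no legal box move starts at $\tau$; these form a short list, handled directly.

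The reductions rest on one auxiliary fact, immediate from \ref{ssec-spcp} and the evenness of all $m_\lambda(i)$: for $\tau=(\mu)(\nu)\in\LX_{C_n}^{*2}$ the orbit $\rc_\tau$ has Jordan type the ``double'' $(\mu_1{+}\nu_1,\mu_1{+}\nu_1,\mu_2{+}\nu_2,\mu_2{+}\nu_2,\dots)$ with $\chi(\mu_i{+}\nu_i)=\mu_i$. Hence an orthogonal decomposition $V=V_1\oplus V_2$ along which $\rc_\tau$ splits is the same thing as a partition of the rows of $\tau$ into two groups; and, since the $\chi$ of an orthogonal sum is the entrywise maximum of the summands' maps (Subsection~\ref{lem-sp2}), when the two factors carry a common Jordan block size one may distribute the corresponding value of $\chi$ freely between them. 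For a cover \emph{not} of shape (a) or (b) I would exhibit a nontrivial such decomposition realizing reduction~1. If the cover is a Type~I move at $j_0$ and $\mu,\nu$ are not both supported on the single row $j_0$, peel row $j_0$ off by itself: the other factor is unchanged ($\tau_2=\tau_2'$) and the row-$j_0$ factor carries a shape-(a) move, so $\tau_1'\le\tau_1$ by the entrywise criterion. If the cover is a Type~II move at $j_0$ and $\mu,\nu$ are not both supported on rows $j_0,j_0+1$, peel off rows $1,\dots,j_0-1$ (for $j_0\ge2$) or rows $\ge3$ (for $j_0=1$); again one factor is unchanged and the other carries a move that drops $s$ at a single entry, hence is $\le$. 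For a compound cover, peel off a small factor: e.g.\ $(1)(1,1)$ covers $(1,1,1)()$, and one splits $V$ as $\mathfrak{sp}(4)^*\oplus\mathfrak{sp}(2)^*$ with $\rc_{(1)(1,1)}=\rc_{(1)(1)}+\rc_{()(1)}$ and $\rc_{(1,1,1)()}=\rc_{(1,1)()}+\rc_{()(1)}$, where $(1,1)()\le(1)(1)$ and the second factor is unchanged --- the freedom in how $\rc_{(1,1,1)()}$ distributes its $\chi$ being exactly what makes the second factor match. In each such case both factors are nonzero and proper, so reduction~1 applies.

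What survives the peeling is exactly a Type~I move at $j_0=1$ with $\mu=(\mu_1)$, $\nu=(\nu_1)$ (this is (a)) and a Type~II move at $j_0=1$ with $\mu,\nu$ supported on the first two rows (this is (b)). To close the argument one checks that reduction~1 genuinely \emph{fails} for these two families: by the doubled-Jordan-type description the nontrivial orthogonal decomposition of $\rc_\tau$ is forced (and for a single-row $\tau$ there is none), and in every case the induced comparability $\tau_2'\le\tau_2$ (resp.\ $\tau_1'\le\tau_1$) fails by the entrywise test, even after exhausting the freedom in placing $\chi$ (for (b) this comes down to the legality inequality $\mu_1\ge\mu_2+1$ forcing $\mu_1>\mu_2$ on the small factor). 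The main obstacle is this last point combined with the enumeration of compound covers: for each non-(a)/(b) cover one must supply not merely a decomposition but a valid assignment of the $\chi$-values, and the naive choice (restrict $\chi$ to each factor) can fail when another choice succeeds, so one must argue with the maximum rule of \ref{lem-sp2} throughout.
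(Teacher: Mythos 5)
You take a genuinely different route from the paper. The paper's proof, working directly from the hypothesis that Reduction 1 does not apply, extracts constraints (e.g.\ when $\mu_1+\nu_1=\mu_1'+\nu_1'$ it forces $\mu_1^*\le 1$, $\nu_1^*\le 1$, giving case (a)), then splits on $\mu_1>\mu_1'$ versus $\mu_1=\mu_1'$ and in each subcase constructs an explicit intermediate $\tau''\in\LX_C^{*2}$ with $\tau>\tau''\ge\tau'$, forcing $\tau'=\tau''$ and eventually pinning down shapes (a) and (b). Your plan is the contrapositive: classify all covering pairs of $(\LX_{C_n}^{*2},\le)$ and, for each non-(a)/(b) cover, exhibit a nontrivial orthogonal decomposition realizing Reduction 1. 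The $s(\tau)$-reformulation, the legality conditions for box moves (which you correctly tailor to $\LX_C^{*2}$, not just $\cP_2(n)$), and the ``doubled'' description $\rc_\tau\leftrightarrow(\lambda,\chi)$ with $\lambda_{2i-1}=\lambda_{2i}=\mu_i+\nu_i$, $\chi(\mu_i+\nu_i)=\mu_i$ are all correct, and the peeling idea does work on the examples you cite.

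However, there are two concrete gaps. First, the classification of compound covers is asserted, not established: the claims that they ``form a short list, handled directly'' and that they ``occur only when no legal box move starts at $\tau$'' are not proved, and the second is not a visible structural property of the subposet $\LX_{C_n}^{*2}$ (a legal box move at $\tau$ only shows that particular $\sigma$ is not between $\tau$ and $\tau'$; it does not preclude a compound cover to a different $\tau'$). Without a complete enumeration of compound covers, the contrapositive argument does not close. Second — as you yourself flag — invoking Reduction 1 requires decomposing \emph{both} $\rc_\tau$ and $\rc_{\tau'}$ along the same $V=V_1\oplus V_2$, with a valid assignment of $\chi$-values to each summand consistent with the maximum rule in \ref{lem-sp2}; the naive restriction can fail, and the required analysis is outlined but not carried out. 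These are exactly the places where the paper's approach is more economical: it never needs to enumerate covers or verify Reduction 1 positively, since it proceeds from the negative assumption and builds $\tau''$ directly to contradict the covering hypothesis. (Your closing check that Reduction 1 genuinely fails for (a)/(b) is not needed for the lemma, which is a one-directional implication; that sharpness is addressed later, in Proposition \ref{prop-od1} and the subsequent degenerations.)
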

\begin{proof} Note that we can apply Reduction 1 if $\mu_1^*\geq 2$ or $\nu_1^*\geq 2$, and  for some $j$, $\mu_j+\nu_j=\mu'_j+\nu'_j$ with $\mu_j\geq\mu_j'$ or $\nu_j'=\nu_k'$ for some $k<j$.   We denote by $(1^a)$  the partition with all parts $1$ and multiplicity $a$. Since Reduction 1 does not apply, if $\mu_1+\nu_1=\mu_1'+\nu_1'$, then we have $\mu_1^*\leq1,\nu_1^*\leq 1$  and thus $\tau,\tau'$ are as in case (a). From now on we assume that $\mu_1+\nu_1>\mu_1'+\nu_1'$.
Let $r=\mu_1$ and $a=\mu_r^*$. Then $r>0$ (otherwise $\tau=(0)(1^n)$ is minimal). We have the following cases.

1) $\mu_1>\mu_1'$. Let $b=\nu_{r}^*$ and $c=\nu_{r+1}^*$. Then $ c\leq a$ and $b\leq\mu_{r-1}^*$.\\[5pt]
 i) If $b<a$, then there exists $\tau''\in\LX_C^{*2}$ such that $\tau=\tau''+(1^a)(1^b)$. One easily verifies that $\tau>\tau''+(1^b)(1^a)\geq\tau'$ (note that $\mu_i-1=\mu_1-1\geq\mu'_i$ for $i\in[b+1,a]$). Hence $\tau'=\tau''+(1^b)(1^a)$.\\[5pt]
ii) If $b\geq a$ and $c\neq 0$, then there exists $\tau''\in\LX_C^{*2}$ such that $\tau=\tau''+(1^a)(1^c)$. One easily verifies that $\tau>\tau''+(1^{a+1})(1^{c-1})\geq\tau'$ (note that $\nu_c-1=\mu_1\geq\nu_c'$ and $\nu_i+\mu_{i+1}-1=2\mu_1-1\geq\nu_i'+\mu_{i+1}'$ for $i\in[c+1,a-1]$). Hence $\tau'=\tau''+(1^{a+1})(1^{c-1})$.\\[5pt]
 iii) If $b\geq a$, $c=0$ and $b<\mu_{r-1}^*$, then there exists $\tau''\in\LX_C^{*2}$ such that $\tau=\tau''+(1^a)(1^b)$. One easily verifies that $\tau>\tau''+(1^{b+1})(1^{a-1})\geq\tau'$ (note that $\mu_a+\nu_a-1=2\mu_1-1\geq\mu_a'+\nu_a'$ and $\nu_i+\mu_{i}=2\mu_1-1\geq\nu_i'+\mu_{i}'$ for $i\in[a+1,b]$). Hence $\tau'=\tau''+(1^{b+1})(1^{a-1})$.\\[5pt]
iv)  If $b\geq a$, $c=0$ and $b=\mu_{r-1}^*$, then let $\tau''=(\mu'')(\nu'')$ with $\nu''=\nu$ and $\mu_i''=\mu_i$ except that $\mu_{a}''=\mu_a-1$ and $\mu''_{b+1}=\mu_{b+1}+1$. Then one easily verifies that $\tau>\tau''\geq\tau'$ (note that $\mu_{a}-1=\mu_1-1\geq\mu_a',\ \mu_a+\nu_a-1=2\mu_1-1\geq\mu_a'+\nu_a'$ and $\mu_i\geq\mu_i',\nu_i\geq\nu_i'$ for $i\in[a+1,b]$). Hence $\tau'=\tau''$.

Since $\mu_1>\mu_1'$, in case (i) we have $b=0$, cases (ii) and (iii) do not happen, in case (iv) we have $a=1$. As Reduction 1 does not apply, in case (i) we have $a=1$,  $\tau=(\mu_1)(\nu_1)$ and $\tau'=(\mu_1-1)(\nu_1+1)$;  in case (iv) we have $b=1$, $\tau=(\mu_1,\mu_2)(\mu_1,\nu_2)$ and $\tau'=(\mu_1-1,\mu_2+1)(\mu_1,\nu_2)$, but notice that $\tau>(\mu_1,\mu_2+1)(\mu_1-1,\nu_2)>\tau'$ ($\nu_2\leq\mu_1-1$ since $b=1$) so case (iv) does not happen. It follows that $\tau$ and $\tau'$ are as in (a).

2) $\mu_1=\mu_1'$. Then $\nu_1>\nu_1'$. Let $s=\nu_1$ and $b=\nu_s^*$. \\[5pt]
i) If $a\leq b$,  then there exists $\tau''\in\LX_C^{*2}$ such that $\tau=\tau''+(1^a)(1^b)$. One easily verifies that $\tau>\tau''+(1^{b+1})(1^{a-1})\geq\tau'$ (note that $\nu_i-1=\nu_1-1\geq\nu_i'$ for $i\in[a,b]$). Hence $\tau'=\tau''+(1^{b+1})(1^{a-1})$.\\[5pt]
ii) If $a>b$ and $\nu_a=s-1$,  then there exists $\tau''\in\LX_C^{*2}$ such that $\tau=\tau''+(1^a)(1^b)$. One easily verifies that $\tau>\tau''+(1^{a+1})(1^{b-1})\geq\tau'$ (note that $\nu_b-1=\nu_1-1\geq\nu_b'$,  $\nu_i=\nu_1-1\geq\nu_i'$ and $\mu_i=\mu_1\geq\mu_i'$ for $i\in[b+1,a]$). Hence $\tau'=\tau''+(1^{a+1})(1^{b-1})$.\\[5pt]
iii) If $a>b$, $\nu_a\leq s-2$,  then there exists $b'\in[b+1,a-1]$ such that $\nu_{b+1}=\cdots=\nu_{b'}=s-1>\nu_{b'+1}$.
Let $\tau''=(\mu'')(\nu'')$ with $\mu''=\mu$ and $\nu_i''=\nu_i$ except that $\nu_{b}''=\nu_b-1$ and $\nu''_{b'+1}=\nu_{b'+1}+1$. Then one easily verifies that $\tau>\tau''\geq\tau'$ (note that $\nu_{b}-1=\nu_1-1\geq\nu_b'$, and $\mu_i=\mu_1\geq\mu_i',\nu_i=\nu_1-1\geq\nu_i'$ for $i\in[b+1,b']$). Hence $\tau'=\tau''$.

Since $\nu_1>\nu_1'$, we have that in case (i) $a=1$, in cases (ii) and (iii) $b=1$. As Reduction 1 does not apply, in case (i) we have $b=1$, $\tau=(\mu_1,\mu_2)(\nu_1,\nu_2)$ and $\tau'=(\mu_1,\mu_2+1)(\nu_1-1,\nu_2)$; case (ii) does not happen ($a\geq2$); in case (iii) we have $\nu_2\leq\nu_1-2$, $\tau=(\mu_1,\mu_1)(\nu_1,\nu_2)$ and $\tau'=(\mu_1,\mu_1)(\nu_1-1,\nu_2+1)$ (but notice that $\tau>(\mu_1,\mu_1-1)(\nu_1,\nu_2+1)>\tau'$, so this case does not happen). It follows that $\tau$ and $\tau'$ are as in (b).
\end{proof}

\begin{proposition}\label{prop-od1}
Suppose that Reductions 1 and 2 do not apply. One of the following is true:
\begin{enumerate}
\item[(i)] $\tau=(1)(0)$, $\tau'=(0)(1)$;
\item[(ii)] $\tau=(1)(1)$, $\tau'=(1,1)(0)$;
\item[(iii)] $\tau=(1)(2)$, $\tau'=(1,1)(1)$;
\item[(iv)] $\tau=(1)(2,1)$, $\tau'=(1,1)(1,1)$;
\item[(v)] $\tau=(\frac{n}{2},\frac{n}{2}-1)(1)$, $\tau'=(\frac{n}{2},\frac{n}{2})(0)$\ \ (for $n\geq 4$ even).
\end{enumerate}
\end{proposition}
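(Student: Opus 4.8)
The plan is to take the two shapes produced by Lemma~\ref{lem-od1} --- applicable because Reduction~1 is assumed not to apply --- and narrow them down to (i)--(v) by repeatedly invoking Reduction~2. Recall the shapes: case (a) is $\tau=(\mu_1)(\nu_1)$, $\tau'=(\mu_1-1)(\nu_1+1)$; case (b) is $\tau=(\mu_1,\mu_2)(\nu_1,\nu_2)$, $\tau'=(\mu_1,\mu_2+1)(\nu_1-1,\nu_2)$. Membership in $\LX_C^{*2}=\{(\mu)(\nu)\,|\,\nu_i\le\mu_i+1\}$ together with the partition conditions forces, in case (b), the inequalities $\mu_1\ge\mu_2+1$, $0\le\nu_2\le\nu_1-1$, $\nu_1\le\mu_1+1$, $\nu_2\le\mu_2+1$, and (as one checks) these are the only constraints.

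The engine is an elementary observation about the maps $j_k$ of \ref{ssec-jind}: for every $k\ge1$ and all $\omega,\omega'\in\LX$ one has $j_k(\omega)\le j_k(\omega')$ iff $\omega\le\omega'$, and $j_k(\omega)=j_k(\omega')$ iff $\omega=\omega'$. Indeed $j_k$ raises each term of the two families of partial sums defining the order on $\cP_2(n)$ by an amount depending only on the running index and on $k$, so both defining families of inequalities transfer verbatim; injectivity of $j_k$ handles equality, hence also strict inequality. Consequently, whenever for some single $k$ both $\tau$ and $\tau'$ admit preimages $\omega,\omega'$ in $\LX_C^{*2}$ under $j_k$, the relation $\tau'<\tau$ gives $\omega'<\omega$ in a smaller symplectic group (note $|\omega|=|\omega'|=n-k$), so Reduction~2 applies with $r=1$; since by hypothesis it does not, we conclude that for every $k\ge1$ at least one of $\tau,\tau'$ has no preimage in $\LX_C^{*2}$ under $j_k$ --- that is, decrementing the first $\lceil k/2\rceil$ entries of its first partition and the first $\lfloor k/2\rfloor$ entries of its second partition does not land in $\LX_C^{*2}$.

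Given this, each situation to be excluded reduces to exhibiting one $k$ together with common preimages $\omega,\omega'\in\LX_C^{*2}$ of $\tau,\tau'$ under $j_k$. In case (a): if $\mu_1\ge2$, take $k=2$ when $\nu_1\ge1$ (then $\omega=(\mu_1-1)(\nu_1-1)$, $\omega'=(\mu_1-2)(\nu_1)$) and $k=1$ when $\nu_1=0$ (then $\omega=(\mu_1-1)(0)$, $\omega'=(\mu_1-2)(1)$); hence $\mu_1=1$, which forces $\nu_1=0$, i.e.\ (i). In case (b) argue in three steps. Step~1: if $\mu_1\ge\mu_2+2$, then $k=1$ works when $\nu_1\le\mu_1$, whereas if $\nu_1=\mu_1+1$ the constraints give $\nu_2\le\mu_2+1\le\mu_1-1$, so $\nu_1\ge\nu_2+2$ and $k=2$ works; hence $\mu_1=\mu_2+1$, and $\tau=(\mu_1,\mu_1-1)(\nu_1,\nu_2)$, $\tau'=(\mu_1,\mu_1)(\nu_1-1,\nu_2)$. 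Step~2: if $\mu_1\ge2$ and $\nu_2\ge1$, then $k=4$ works, with $\omega=(\mu_1-1,\mu_1-2)(\nu_1-1,\nu_2-1)$ and $\omega'=(\mu_1-1,\mu_1-1)(\nu_1-2,\nu_2-1)$; hence $\mu_1=1$ or $\nu_2=0$. Step~3: if $\mu_1\ge2$ (so $\nu_2=0$) and $\nu_1\ge2$, then $k=3$ works, with $\omega=(\mu_1-1,\mu_1-2)(\nu_1-1)$ and $\omega'=(\mu_1-1,\mu_1-1)(\nu_1-2)$; hence $\nu_1=1$, and then $\tau=(\mu_1,\mu_1-1)(1)$, $\tau'=(\mu_1,\mu_1)(0)$ with $n=2\mu_1$, which is (v). Finally, in the one remaining situation $\mu_1=1$ (hence $\mu_2=0$) the surviving constraints $\nu_1\le2$, $\nu_2\le1$, $\nu_2\le\nu_1-1$ leave exactly $(\nu_1,\nu_2)\in\{(1,0),(2,0),(2,1)\}$, giving (ii), (iii) and (iv).

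The real work is the bookkeeping in Steps~1--3 of case (b): for each chosen $k$ one must check that the displayed $\omega,\omega'$ are genuine (non-increasing) partition pairs and still satisfy $\nu_i\le\mu_i+1$ after the decrements, so that they lie in $\LX_C^{*2}$; granting this, $\omega'<\omega$ is automatic from the order-reflection property of $j_k$. There is no conceptual obstacle; the point is to choose the $j_k$'s so that the case division is visibly exhaustive.
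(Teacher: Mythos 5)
Your proof is correct and takes essentially the same approach as the paper: start from the two shapes supplied by Lemma~\ref{lem-od1}, then show that every case other than (i)--(v) admits a common $j_k$-preimage pair in $\LX_C^{*2}$, so Reduction~2 applies. The only organizational difference is that you peel off one $j_k$ factor at a time and check membership in $\LX_C^{*2}$ at each step, whereas the paper writes down the full $j_k$-decomposition to a base pair in one go; the underlying mechanism is identical.
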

\begin{proof}In case (a) of Lemma \ref{lem-od1} we have $\tau=j_1^{\mu_1-\nu_1-1}\circ j_2^{\nu_1}(\omega)$ and $\tau'=j_1^{\mu_1-\nu_1-1}\circ j_2^{\nu_1}(\omega')$, where $\omega=(1)(0)>\omega'=(0)(1)$. In case (b) of Lemma \ref{lem-od1} we have $\tau=j_1^{\mu_1-\mu_2-1}\circ j_3^{\nu_1-\nu_2-1}\circ j_4^{\nu_2}(\omega)$ and $\tau'=j_1^{\mu_1-\mu_2-1}\circ j_3^{\nu_1-\nu_2-1}\circ j_4^{\nu_2}(\omega')$ if $\nu_1\leq\mu_2+1$, where $\omega=(\mu_2-\nu_1+2,\mu_2-\nu_1+1)(1)>\omega'=(\mu_2-\nu_1+2,\mu_2-\nu_1+2)(0)$; $\tau=j_1^{\mu_1-\nu_1}\circ j_2^{\nu_1-\mu_2-1}\circ j_3^{\mu_2-\nu_2}\circ j_4^{\nu_2}(\omega)$ and $\tau'=j_1^{\mu_1-\nu_1}\circ j_2^{\nu_1-\mu_2-1}\circ j_3^{\mu_2-\nu_2}\circ j_4^{\nu_2}(\omega')$ if $\mu_2+2\leq\nu_1\leq\mu_1$ and $\nu_2\leq\mu_2$, where $\omega=(1)(1)>\omega'=(1,1)(0)$; $\tau= j_2^{\nu_1-\mu_2-2}\circ j_3^{\mu_2-\nu_2}\circ j_4^{\nu_2}(\omega)$ and $\tau'= j_2^{\nu_1-\mu_2-2}\circ j_3^{\mu_2-\nu_2}\circ j_4^{\nu_2}(\omega')$ if $\nu_1=\mu_1+1$ and $\nu_2\leq\mu_2$, where $\omega=(1)(2)>\omega'=(1,1)(1)$;  $\tau= j_1^{\mu_1-\nu_1+1}\circ j_2^{\nu_1-\mu_2-2}\circ j_4^{\mu_2}(\omega)$ and $\tau'= j_1^{\mu_1-\nu_1+1}\circ j_2^{\nu_1-\mu_2-2}\circ j_4^{\mu_2}(\omega')$ if $\nu_2=\mu_2+1$ and $\nu_1\geq\mu_2+2$, where $\omega=(1)(2,1)>\omega'=(1,1)(1,1)$. Since Reduction 2 does not apply, the proposition follows.
\end{proof}
It remains to show that in each case of Proposition \ref{prop-od1} we have $\rc_{\tau'}<\rc_{\tau}$. In case (i) it is obvious that $\rc_{\tau'}<\rc_\tau$ as $\tau'$ corresponds to the $0$ orbit. In each case (ii)-(v), we choose an element $\xi\in\rc_\tau$ and define a family $\{g_t\in Sp(V),\ t\in\tk^*\}$ such that $\lim_{t\to 0}g_t^{-1}.\xi=\xi'\in\rc_{\tau'}$. Then it follows that $\rc_{\tau'}<\rc_{\tau}$. The elements $\xi$ (or equivalently the quadratic form $\alpha_\xi$ associated to $\xi$) and the family $\{g_t\in Sp(V),\ t\in\tk^*\}$ in each case are given as follows. We have $\alpha_{g.\xi}(v)=\alpha_\xi(g^{-1}v)$.

Let $e_i,i\in[-n,n]-\{0\}$ be a basis of $V$ such that $\la e_i,e_j\ra=\delta_{i+j,0}$.\\[5pt]
(ii) $\alpha_\xi(\sum a_ie_i)=a_{-2}^2+a_1a_{-2};\ g_te_1=te_1,\ g_te_2=e_2,\ g_te_{-1}=\frac{1}{t}e_{-1},\ g_te_{-2}=e_{-2}$. Then
$\alpha_{g_t^{-1}.\xi}(\sum a_ie_i)=a_{-2}^2+ta_1a_{-2}.$\\[5pt]
(iii) $\alpha_\xi(\sum a_ie_i)=a_1^2+a_1a_{-2}+a_2a_{-3}$;
$g_te_1=e_1,\ g_te_2=\frac{1}{t}e_2,\ g_te_3=\frac{1}{t}e_3,g_te_{-1}=e_{-1},\ g_te_{-2}=te_{-2},\ g_te_{-3}=te_{-3}$. Then $\alpha_{g_t^{-1}.\xi}(\sum a_ie_i)=a_{1}^2+a_2a_{-3}+ta_1a_{-2}$.\\[5pt]
(iv) $\alpha_\xi(\sum a_ie_i)=a_1a_{-2}+a_2a_{-3}$;
$g_te_1=te_1,\ g_te_2=e_2+e_4,\ g_te_3=e_3,\ g_te_4=te_4,\ g_te_{-1}=\frac{1}{t}e_{-1},\ g_te_{-2}=e_{-2},\ g_te_{-3}=e_{-3},\ g_te_{-4}=\frac{1}{t}(e_{-4}+e_{-2})$. Then  $\alpha_{g_t^{-1}.\xi}(\sum a_ie_i)=a_{1}a_{-4}+a_2a_{-3}+ta_1a_{-2}$.\\[5pt]
(v) $\alpha_\xi(\sum a_ie_i)=\sum_{i\in[1,d]}a_ia_{-i-1}+\sum_{i\in[d+2,2d-1]}a_ia_{-i-1}+a_{d}^2+a_{2d}^2$, where $d=\frac{n}{2}$;
$g_t e_1=e_1,\ g_t e_{-1}=e_{-1},\ g_t e_i=\frac{1}{t}(e_i+e_{i+d})\text{ and } g_t e_{-i}=t e_{-i}\text{ for }i\in[2,d],\
g_t e_{d+1}=\frac{1}{t}e_{d+1},\ g_t e_{-d-1}=t e_{-d-1},\ g_t e_i=e_i\text{ and }g_t e_{-i}=e_{-i}+e_{-i+d}\text{ for } i\in[d+2,2d].
$
Then $\alpha_{g_t^{-1}.\xi}(\sum a_ie_i)=a_1a_{-d-2}+\sum_{i\in[2,d]}a_ia_{-i-1}+\sum_{i\in[d+2,2d-1]}a_ia_{-i-1}+a_{2d}^2+ta_1a_{-2}$.

\subsection{}\label{ssec-redo} Assume that $G=SO(2n+1)$. Recall that $\LX_B^{*2}=\{(\mu)(\nu)|\nu_i\geq\mu_{i+1}\}$. Let $\tau=(\mu)(\nu),\tau'=(\mu')(\nu')\in\LX_{B_n}^{*2}$ be such that $\tau>\tau'$ and $\{\tau''\in\LX_{B_n}^{*2}|\tau>\tau''>\tau'\}=\emptyset$.

\begin{proposition}
Suppose that reduction 1 and 2 do not apply. Then $\tau=(1)(n-1)$, $\tau'=(0)(n)$.
\end{proposition}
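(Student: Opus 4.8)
The plan is to carry out, for $SO(2n+1)$, the same two-step argument that handles $Sp(2n)$ in Subsection~\ref{ssec-redsp}, using the description $\LX_B^{*2}=\{(\mu)(\nu)\mid\nu_i\geq\mu_{i+1}\}$. \emph{Step~1} is the orthogonal analogue of Lemma~\ref{lem-od1}: assuming Reduction~1 does not apply, one shows that $\tau=(\mu_1)(\nu_1)$ and $\tau'=(\mu_1-1)(\nu_1+1)$ with $\mu_1\geq 1$. The proof runs the same dichotomy $\mu_1>\mu_1'$ versus $\mu_1=\mu_1'$ (so that $\nu_1>\nu_1'$) as in the proof of Lemma~\ref{lem-od1}. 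The new feature is that for $SO(2n+1)$ an orthogonal summand split off in Reduction~1 is necessarily of type $D$ (a pair of equal Jordan blocks of $T_\xi$, or all of $\lambda$, leaving an odd factor $\Lo(2\mu_1+1)^*$), so Reduction~1 is strictly stronger here than in the symplectic case: as soon as $\mu$ or $\nu$ of $\tau$ has two or more nonzero parts, one peels off the smallest pair of equal parts of $\lambda$ (or, if that is the only pair, all of $\lambda$), checks, using the type-$D$ case of Theorem~\ref{mainthm} (available by the induction on $\dim G$ in Subsection~\ref{ssec-o1}), that the peeled-off $D$-piece of $\tau'$ lies in the closure of the $D$-piece of $\tau$, and then invokes Lemma~\ref{lem-r1}. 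In particular, the two-row configuration corresponding to case~(b) of Lemma~\ref{lem-od1} does not survive for type $B$.

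\emph{Step~2} is the analogue of Proposition~\ref{prop-od1}. Assume in addition that Reduction~2 does not apply and that $\tau,\tau'$ are as in Step~1. If $\mu_1\geq 2$, set $\omega=(\mu_1-1)(\nu_1)$ and $\omega'=(\mu_1-2)(\nu_1+1)$; then $\omega,\omega'\in\LX_{B_{n-1}}^{*2}$, $\omega>\omega'$, and by the formulas of~\ref{ssec-jind}(a) one has $\tau=j_1(\omega)$ and $\tau'=j_1(\omega')$, so Reduction~2 applies. Hence $\mu_1=1$, and since $\mu_1+\nu_1=n$ this forces exactly $\tau=(1)(n-1)$, $\tau'=(0)(n)$ (for which no common $j_k$-strip exists, since every $j_k$ raises the first part of $\mu$). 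This proves the proposition.

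The step I expect to be the main obstacle is Step~1, and inside it the verification that \emph{every} $\tau$ not of the form $(\mu_1)(\nu_1)$ admits a Reduction~1 peeling consistent with $\tau'$. One must keep track of the degenerate type-$D$ orbits (those whose bipartition has the form $(\delta)(\delta)$ and which are counted twice), which appear on the boundary where $\nu_1-\mu_2$ is as small as allowed and which require the remark that both of them lie in the closure of the next type-$D$ orbit; and one must dispose separately of the tight instances of $\nu_i\geq\mu_{i+1}$ and of the zero orbit $(0)(1^n)$, which is trivially below everything. Once this bookkeeping is complete Step~2 is immediate. After the proposition one still has to verify $\rc_{(0)(n)}<\rc_{(1)(n-1)}$ directly, for instance by exhibiting a one-parameter family $\{g_t\in SO(2n+1),\ t\in\tk^*\}$ with $\lim_{t\to 0}g_t^{-1}.\xi=\xi'$ for suitable representatives $\xi\in\rc_{(1)(n-1)}$, $\xi'\in\rc_{(0)(n)}$, as in the constructions at the end of Subsection~\ref{ssec-redsp}; but this lies outside the present statement.
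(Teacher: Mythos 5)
The main gap is in Step 1, where you claim that for $SO(2n+1)$ Reduction~1 applies whenever $\mu$ or $\nu$ of $\tau$ has two or more nonzero parts, and hence that $\tau$ must reduce to a one-row bipartition $(\mu_1)(\nu_1)$. This is not correct. Consider $\tau=(3,2)(2)$ and $\tau'=(2,2)(3)$ in $\LX_{B_7}^{*2}$. A direct check shows these are adjacent in the dominance order, so they form a pair the proposition must address. The orbit $\rc_\tau$ is $(m;(\lambda,\chi))=(3;(4,4),\chi(4)=2)$ and $\rc_{\tau'}$ is $(2;(5,5),\chi'(5)=3)$. The only nontrivial orthogonal split of a representative $\xi\in\rc_\tau$ is $V=V_1\oplus V_2$ with $\dim V_1=7$, $\dim V_2=8$, peeling off the $\{v_i,u_i\}$ block (the $B_3$ piece $(3)(0)$) and leaving the $D_4$ piece with $\lambda=(4,4)$. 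But $\xi'\in\rc_{\tau'}$ has no representative $\xi_1'+\xi_2'$ with $\xi_1'\in\Lo(7)^*$, $\xi_2'\in\Lo(8)^*$: one would need $m(\xi_1')=2$ and hence $|\lambda(\xi_1')|=2$, yet $\lambda(\xi_1')$ must be a sub-multiset of $(5,5)$ with even multiplicities, which is impossible. So Reduction~1 does \emph{not} apply to this pair, and your claim that every two-part $\mu$ can be disposed of by peeling a $D$-summand off $\lambda$ fails. (The condition the paper records for the applicability of Reduction~1 in type $B$ is that $\mu_1=\mu_1'$, or that some $\mu_{j+1}+\nu_j=\mu_{j+1}'+\nu_j'$ with additional hypotheses; neither holds here.) The paper's Step~1 therefore stops at the weaker normal form $\tau=(\mu_1,\mu_2)(\nu_1)$, $\tau'=(\mu_1-1,\mu_2)(\nu_1+1)$, with $\mu_2$ allowed to be nonzero.

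This makes your Step 2 incomplete as well. You only undo $j_1$, but when $\mu_2>0$ the inverse $j_1$ step can fail: if $\mu_1=\mu_2+1$ then $\omega'=(\mu_1-2,\mu_2)(\nu_1+1)$ is not a partition. The paper reduces the general pair $(\mu_1,\mu_2)(\nu_1)\ > \ (\mu_1-1,\mu_2)(\nu_1+1)$ by peeling $j_1$, $j_2$, and $j_3$ in the right pattern, namely $\tau=j_1^{\mu_1-\nu_1-1}\circ j_2^{\nu_1-\mu_2}\circ j_3^{\mu_2}((1)(0))$ when $\mu_1\geq\nu_1+2$, and $\tau=j_2^{\mu_1-\mu_2-1}\circ j_3^{\mu_2}((1)(\nu_1+1-\mu_1))$ when $\mu_1\leq\nu_1+1$, with the same string of $j$'s for $\tau'$; in particular the pair $(3,2)(2)>(2,2)(3)$ is handled as $j_3^2$ applied to $(1)(0)>(0)(1)$. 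Without the $j_2$ and $j_3$ reductions you cannot conclude $\mu_2=0$, and the proposition remains unproved for those pairs.
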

\begin{proof}
Note that we can apply Reduction 1 if $\mu_1=\mu_1'$ or if for some $j\geq 1$, $\mu_{j+1}+\nu_j=\mu'_{j+1}+\nu'_j$ with $\nu_j\geq\nu_j'$ or $\mu_{j+1}'=\mu_k'$ for some $k<j+1$.

Since Reduction 1 does not apply, we have $\mu_1>\mu_1'$. Let $r=\mu_1$ and $a=\mu_1^*$. Let $\tau''=(\mu'')(\nu'')$ with $\mu''_i=\mu_i-1$ for $i\in[1,a]$ and $\nu_i''=\nu_i+1$ for $i\in[1,a]$. Then one easily verifies that $\tau''\in\LX^{*2}_B$ and $\tau>\tau''\geq\tau'$. Hence $\tau'=\tau''$. Since Reduction 1 does not apply, we have $a=1$, $\tau=(\mu_1,\mu_2)(\nu_1)$ and $\tau'=(\mu_1-1,\mu_2)(\nu_1+1)$. We have $\tau=j_1^{\mu_1-\nu_1-1}\circ j_{2}^{\nu_1-\mu_2}\circ j_3^{\mu_2}((1)(0))$ and $\tau'=j_1^{\mu_1-\nu_1-1}\circ j_{2}^{\nu_1-\mu_2}\circ j_3^{\mu_2}((0)(1))$ if $\mu_1\geq\nu_1+2$; $\tau=j_{2}^{\mu_1-\mu_2-1}\circ j_3^{\mu_2}((1)(\nu_1+1-\mu_1))$ and $\tau'=j_{2}^{\mu_1-\mu_2-1}\circ j_3^{\mu_2}((0)(\nu_1+2-\mu_1))$ if $\mu_1\leq\nu_1+1$. Since Reduction 2 does not apply, $\tau$ and $\tau'$ are as in the proposition.
\end{proof}

Let $\tau=(1)(n-1)$ and $\tau=(0)(n)$. We show that $\rc_\tau>\rc_{\tau'}$. If $n=1$, this is obvious. Assume that $n\geq 2$. Let $e_i$, $i\in[-n,n]$ be a basis of $V$ such that $Q(e_i)=\delta_{i,0}$ and $\la e_i,e_j\ra=\delta_{i+j,0}-\delta_{i,0}\delta_{j,0}$. Let $\beta_\xi$ be the symplectic form corresponding to $\xi\in\cN_{\Lg^*}$ such that $\beta_\xi(\sum_{i\in[-n,n]}a_ie_i,\sum_{i\in[-n,n]}b_ie_i)=a_0b_{-1}+b_0a_{-1}+a_nb_{-1}+b_na_{-1}+\sum_{i\in[2,n-1]}(a_ib_{-i-1}+b_ia_{-i-1})$. Then $\xi\in\rc_\tau$.  Let $g_t\in SO(V)$, $t\in\tk^*$ be defined by:
$$g_te_0=e_0,\ g_te_i=\frac{1}{t}e_i,\ g_te_{-i}=te_{-i},\ i\in[1,n].$$
We have $\beta_{g_t^{-1}.\xi}(\sum a_ie_i,\sum b_ie_i)=\beta_\xi(g_t(\sum a_ie_i),g_t(\sum b_ie_i))=t(a_0b_{-1}+b_0a_{-1})+a_nb_{-1}+b_na_{-1}+\sum_{i\in[2,n-1]}(a_ib_{-i-1}+b_ia_{-i-1})$. Thus $\lim_{t\to 0}g_t^{-1}.\xi \in\rc_{\tau'}$ and $\rc_{\tau'}<\rc_{\tau}$.

\subsection{}\label{ssec-o2} We show that if $\rc_{\tau'}\leq\rc_\tau$ then $\tau'\leq\tau$. Since induction for orbits preserves order and $j_k(\tau')\leq j_k(\tau)$ iff $\tau'\leq\tau$, we may use the operation $j_k$ as often as needed. We show that
 \begin{eqnarray*}&& (*)\qquad \sum_{j\in[1,i]}(\mu_j'+\nu_j')\leq \sum_{j\in[1,i]}(\mu_j+\nu_j),\\&&
 (**)\qquad \sum_{j\in[1,i-1]}(\mu_j'+\nu_j')+\mu_i'\leq \sum_{j\in[1,i-1]}(\mu_j+\nu_j)+\mu_i.\end{eqnarray*}
 As pointed out by the referee, for $Sp(2n)$ the condition $(*)$ follows by considering the partitions corresponding to $T_\tau$ and $T_{\tau'}$, which are $(\mu_1+\nu_1,\ldots)$ and $(\mu_1'+\nu_1',\ldots)$.
 
We write $\text{codim}(\tau,\tau')=\text{codim}_{\overline{\rc_\tau}}\rc_{\tau'}$ for $\tau,\tau'\in\LX^{*2}_R$.  For each $l\geq1$, let  $\Gamma_l'=\{(\tau,\tau')\in\LX^{*2}_R\times\LX^{*2}_R,\ \rc_\tau\geq\rc_{\tau'}\text{ and }(*)\text{ fails for }i=l\}$, $\Delta_l'=\{(\tau,\tau')\in\LX^{*2}_R\times\LX^{*2}_R,\ \rc_\tau\geq\rc_{\tau'}\text{ and }(**)\text{ fails for }i=l\}$, $d=\min\{\text{codim}(\tau,\tau')|(\tau,\tau')\in \Gamma_l'\}$, $d'=\min\{\text{codim}(\tau,\tau')|(\tau,\tau')\in \Delta_l'\}$ and
$$\Gamma_l=\{(\tau,\tau')\in \Gamma_l'|\text{codim}(\tau,\tau')=d\},\ \Delta_l=\{(\tau,\tau')\in \Delta_l'|\text{codim}(\tau,\tau')=d'\}.$$
It is enough to show that $\Gamma_l=\emptyset$ and $\Delta_l=\emptyset$.

It follows from the definitions that we have
\begin{lemma}
(a) if $(\tau,\tau')\in \Gamma_l$ (resp. $\Delta_l$) then $(j_k(\tau),j_k(\tau'))\in \Gamma_l$ (resp. $\Delta_l$).

(b) if $\tau,\tau',\tau''\in\LX^{*2}_R$, $\rc_{\tau}\geq\rc_{\tau''}\geq\rc_{\tau'}$ and $(\tau,\tau')\in \Gamma_l$ (or $\Delta_l$), then $\tau=\tau''$ or $\tau''=\tau'$.
\end{lemma}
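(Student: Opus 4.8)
The plan is to deduce (a) from three stability properties of the operation $j_k$ on $\LX_R^{*2}$, and (b) from a short count with orbit dimensions. I treat $\Gamma_l$ throughout; the argument for $\Delta_l$ is word for word the same with the inequality $(**)$ in place of $(*)$, and if $\Gamma_l'=\emptyset$ there is nothing to prove.

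First I would record three facts about $j_k$, equivalently about the operation $\rc_\tau\mapsto\rc_{j_k(\tau)}$ on nilpotent orbits, which by \ref{lem-j} is induction from a Levi $L$ with $\Ll\cong\mathfrak{sp}(2n)\oplus\mathfrak{gl}(k)$ (resp. $\Lo(2n+1)\oplus\mathfrak{gl}(k)$) inside $\tilde G=Sp(2n+2k)$ (resp. $SO(2n+2k+1)$). (i) It preserves the order on orbits, since induction for orbits does (as used in Reduction 2 and in \ref{ssec-o2}). (ii) It preserves codimension: $\text{codim}(\tau,\tau')=\dim\rc_\tau-\dim\rc_{\tau'}=\dim Z_G(\xi')-\dim Z_G(\xi)$ for $\xi\in\rc_\tau$, $\xi'\in\rc_{\tau'}$, and by the proof of Proposition \ref{prop-ind} the dimension of the $\tilde G$-centralizer of a point of $\rc_{j_k(\tau)}$ equals $\dim Z_G(\xi)+k^2$ for $\xi\in\rc_\tau$ (the centralizer of $0$ in $GL(k)$ has dimension $k^2$), so this $k^2$ cancels and $\dim\rc_{j_k(\tau)}-\dim\rc_{j_k(\tau')}=\dim Z_G(\xi')-\dim Z_G(\xi)=\dim\rc_\tau-\dim\rc_{\tau'}$. (iii) It does not change whether $(*)$ (resp. $(**)$) holds at the single index $i=l$: by \ref{ssec-jind}(a), $j_k$ adds $1$ to the first $[(k+1)/2]$ parts of $\mu$ and to the first $[k/2]$ parts of $\nu$, so $\sum_{j\in[1,l]}(\mu_j+\nu_j)$ increases by $\min(l,[(k+1)/2])+\min(l,[k/2])$, a quantity depending only on $k$ and $l$; the same quantity is added when $(\mu)(\nu)$ is replaced by $(\mu')(\nu')$, so the two sides of $(*)$ at $i=l$ change by equal amounts and the direction of that inequality is preserved. (For $(**)$ the increment to the left side at $i=l$ is $\min(l-1,[(k+1)/2])+\min(l-1,[k/2])$, plus $1$ if $l\leq[(k+1)/2]$, which is again the same for $\tau$ and $\tau'$.) Granting (i)--(iii), part (a) follows at once: if $(\tau,\tau')\in\Gamma_l$, then $(j_k(\tau),j_k(\tau'))$ still has $\rc_{j_k(\tau)}\geq\rc_{j_k(\tau')}$ and still has $(*)$ failing at $i=l$, hence lies in $\Gamma_l'$; its codimension equals that of $(\tau,\tau')$, which is the minimum $d$, so it lies in $\Gamma_l$.

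For part (b), suppose $\rc_\tau\geq\rc_{\tau''}\geq\rc_{\tau'}$ and $(\tau,\tau')\in\Gamma_l$, so $(*)$ at $i=l$ fails for $(\tau,\tau')$. Were $(*)$ at $i=l$ to hold for both $(\tau,\tau'')$ and $(\tau'',\tau')$, adding the two inequalities would give $(*)$ at $i=l$ for $(\tau,\tau')$, a contradiction; hence it fails for at least one of them. If it fails for $(\tau,\tau'')$, then $(\tau,\tau'')\in\Gamma_l'$ gives $\text{codim}(\tau,\tau'')\geq d$, while $\dim\rc_{\tau''}\geq\dim\rc_{\tau'}$ (since $\rc_{\tau'}\subseteq\overline{\rc_{\tau''}}$) gives $\text{codim}(\tau,\tau'')=\dim\rc_\tau-\dim\rc_{\tau''}\leq\dim\rc_\tau-\dim\rc_{\tau'}=d$; hence $\dim\rc_{\tau''}=\dim\rc_{\tau'}$, and since $\rc_{\tau'}\subseteq\overline{\rc_{\tau''}}$ with $\overline{\rc_{\tau''}}$ irreducible of that dimension and $\rc_{\tau''}$ its unique dense orbit, $\rc_{\tau''}=\rc_{\tau'}$, i.e. $\tau''=\tau'$. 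Symmetrically, if $(*)$ at $i=l$ fails for $(\tau'',\tau')$, comparing $\text{codim}(\tau'',\tau')\geq d$ with $\text{codim}(\tau'',\tau')=\dim\rc_{\tau''}-\dim\rc_{\tau'}\leq\dim\rc_\tau-\dim\rc_{\tau'}=d$ forces $\dim\rc_{\tau''}=\dim\rc_\tau$, hence $\rc_{\tau''}=\rc_\tau$ and $\tau''=\tau$; the argument for $\Delta_l$ is identical with $(**)$ in place of $(*)$. I do not expect a real obstacle: the only genuine computation is property (iii), and the single point needing care is that the condition ``$(*)$ (resp. $(**)$) fails at the fixed index $l$'' is precisely what is preserved, both by $j_k$ and by the splitting of the chain $\rc_\tau\geq\rc_{\tau''}\geq\rc_{\tau'}$ into its two steps.
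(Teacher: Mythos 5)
Your proof is correct and is, in substance, the argument the paper leaves implicit: the lemma appears in the text with only the remark ``it follows from the definitions,'' and what you have written is precisely the routine verification that was omitted. In particular your three properties of $j_k$ --- order-preservation (invoked in the paper's Reduction~2), preservation of codimension (via $\dim Z_{\tilde G}(\tilde\xi)=\dim Z_L(\xi)=\dim Z_G(\xi)+k^2$ from the induction argument surrounding Proposition~\ref{prop-ind}), and the observation that $j_k$ shifts both sides of $(*)$ (resp.\ $(**)$) at the fixed index $l$ by the same $k$-dependent constant --- together with the codimension count in (b), are exactly what the paper's ``from the definitions'' is asking the reader to supply.
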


Assume that $G=Sp(V,\la,\ra)$ (resp. $SO(V,Q)$). Let $(\tau,\tau')\in \Gamma_l$ or $\Delta_l$. We can assume that $\tau=j_k(\omega)$ for some $\omega\in\LX^{*2}_R$. Let $\Sigma$ be an isotropic subspace of $V$ with $\dim\Sigma=k$ and let $W\subset V$ be such that $\Sigma^\p=\Sigma\oplus W$. Let $P$ be the parabolic subgroup that stabilizes $\Sigma$ and $L$ the Levi subgroup of $P$ that stabilizes $W$. Then $L\cong Sp(2n-2k)\times GL(k)$ (resp. $SO(2n-2k+1)\times GL(k)$). We have $\overline{\rc_{\tau}}=G.(\overline{\rc_\omega}+\Ln_\Lp^*)$.
 Since $\rc_\tau\geq\rc_{\tau'}$, there exists $\xi'\in\rc_{\tau'}$ such that $\alpha_{\xi'}|_{\Sigma}=0$, $\beta_{\xi'}(\Sigma,\Sigma^\p)=0$ (resp. $\beta_{\xi'}(\Sigma,\Sigma^\p)=0$) and such that $\alpha_{\xi'}$ (resp. $\beta_{\xi'}$) induces a quadratic form (resp. symplectic form) on $\Sigma^\p/\Sigma\cong W$ which corresponds to an element  $\eta'\in\overline{\rc_\omega}$. Let $\omega'\in\LX^{*2}_R$ correspond to the $Sp(2n-2k)$-orbit (resp. $SO(2n-2k+1)$-orbit) of $\eta'$. We have $\rc_{\tau'}\leq\text{Ind}_{\Ll^*}^{\Lg^*}\rc_{\omega'}\leq\rc_\tau=\text{Ind}_{\Ll^*}^{\Lg^*}\rc_{\omega}$ and thus\\[5pt]
\indent\qquad either a) $\rc_{\tau'}=\text{Ind}_{\Ll^*}^{\Lg^*}\rc_{\omega'}$\\
\indent\qquad  or b) $\rc_{\omega'}=\rc_{\omega}$.
\\[5pt]
 In case a) we have $\tau'=j_k(\omega')$ and $(\omega,\omega')\in \Gamma_l$ or $\Delta_l$.  Applying $j_k$ when needed we can assume that $\tau=j_{k_r}\circ\cdots\circ j_{k_1}(0)$ for some sequence $k_1,\ldots,k_r$ (see  \ref{lem-j} (a)) and moreover:\\[5pt]
 \indent (i) $k_1=2s$ is even;\\[5pt]
 \indent (ii) $0\leq k_i-k_{i+1}\leq 1$ for $i\leq r-1$ and $k_i=k_{i+1}$ if $3\nmid i$;\\[5pt]
  \indent (iii) ${\mu_1'}^*\leq s,\ {\nu_1'}^*\leq s$.\\[5pt]
We apply the previous construction with $k=k_r$, and repeat the argument  with $(\tau,\tau')$ replaced by $(\omega,\omega')$ (conditions (i)-(iii) above are still satisfied) until we  reach a point that $\omega=\omega'$. Hence we are reduced to the case that $(\tau,\tau')\in \Gamma_l$ (or $\Delta_l$), $\tau=j_{k_r}\circ\cdots\circ j_{k_1}(0)$, $\omega=\omega'=j_{k_{r-1}}\circ\cdots\circ j_{k_1}(0)$ ($\tau,\tau'$ satisfy (i)-(iii) above), where $\tau'$ and $\omega'$ are related as in the above construction. We will study more closely the relation between $\tau'$ and $\omega'$ and derive a contradiction.

Suppose that $\omega=\omega'=(\sigma)(\delta)$. Let $a=[(k_r+1)/2]$, $b=[k_r/2]$.  We show that (see \ref{ssec-spcde} and \ref{ssec-ocde})\\[5pt]
\indent c) $\mu_i'\leq\sigma_1+1=\mu_a$ for all $i$;\\[5pt]
\indent d) $\mu_i'+\nu_i'\leq\sigma_1+\delta_1+2=\mu_b+\nu_b$ for all $i$;\\[5pt]
\indent e)  if $a<s$, then $\mu_i'\geq\sigma_i=\mu_i$ and $\nu_i'\geq\delta_i=\nu_i$ for $i\in[a+1,s]$ (resp. if $b<s$, then $\mu_{i+1}'\geq\sigma_{i+1}=\mu_{i+1}$ and $\nu_i'\geq\delta_i=\nu_i$ for $i\in[b+1,s]$).\\[5pt]
If $(\tau,\tau')\in\Gamma_l$, it follows from c) and d) that $l\geq b+1$ and from e) that $l\leq b$; if $(\tau,\tau')\in\Delta_l$ it follows from c) and d) that $l\geq a+1$  and from e) that $l\leq a$. This is the required contradiction.

Suppose that $r>1$. We have if $k_{r-1}=k_r$ is odd, then $a=b+1$, $\sigma=(r-1)^{a}(3i_2)(3i_4)\cdots$, $\delta=(r-1)^{b}(3i_1)(3i_3)\cdots$; if $k_{r-1}=k_r$ is even, then $a=b$, $\sigma=(r-1)^{a}(3i_1)(3i_3)\cdots$, $\delta=(r-1)^{b}(3i_2)(3i_4)\cdots$; if $k_{r-1}=k_r+1$ is even, then $3|r-1$, $a=b+1$, $\sigma=(r-1)^{a}(3i_1)(3i_3)\cdots$, $\delta=(r-1)^{b+1}(3i_2)(3i_4)\cdots$; if $k_{r-1}=k_r+1$ is odd, then $3|r-1$, $a=b$, $\sigma=(r-1)^{a+1}(3i_2)(3i_4)\cdots$, $\delta=(r-1)^{b}(3i_1)(3i_3)\cdots$; where in each case $r-1>3i_1$ and $i_1>i_2>i_3>\cdots$.

\subsection{}\label{ssec-spcde}We prove \ref{ssec-o2} c)-e) for $G=Sp(V,\la,\ra)$ by studying the quadratic form $\alpha_{\eta'}$ on $\Sigma^\p/\Sigma$ induced by $\alpha_{\xi'}$. Let $c=\max(\mu_1'^*,\nu_1'^*)$.

If $c=1$, then $V$ is indecomposable (as $T_{\xi'}$-module), and $\alpha_{\xi'}= {}^*W_{\chi(\lambda_1)}(\lambda_1)$ (where $\chi(\lambda_1)=\mu_1'$, $\lambda_1=\mu_1'+\nu_1'$; see \ref{lem-sp2} (a)). Then one easily verifies that $(\Sigma^\p/\Sigma,\alpha_{\eta'})\cong {}^*W_{\chi(\lambda_1)-1}(\lambda_1-2)$ (if $\dim\Sigma=2$), ${}^*W_{\chi(\lambda_1)-1}(\lambda_1-1)$ or ${}^*W_{\chi(\lambda_1)}(\lambda_1-1)$ (if $\dim\Sigma=1$). Thus ${\sigma_1}^*\leq 1,{\delta_1}^*\leq 1$ and $\mu_1'-1\leq\sigma_1\leq\mu_1',\ \nu_1'-1\leq \delta_1\leq\nu_1'$; in this case (c) and (d) hold.

If $c>1$, then $V$ is decomposable, and  there exists an orthogonal decomposition  $V=\oplus_{i=1}^c W_i$ with $W_i$ indecomposable, $\dim W_1\geq\dim W_2\geq\cdots\geq\dim W_c$, such that $\Sigma=\oplus_{i=1}^c (W_i\cap\Sigma)$. We apply the previous result for each factor in the decomposition $\Sigma^\p/\Sigma=\oplus_{i=1}^c(W_{i}\cap\Sigma^\p)/(W_i\cap\Sigma)$.
Assume that $\alpha_{\xi'}|_{W_i}= {}^*W_{l_i'}(\lambda_i')$ (where $\lambda_i'=\mu_i'+\nu_i'$). Then $l_i'\leq\mu_i'$  and $l_j'=\mu_1'$ for some $j$. Suppose that  $\alpha_{\eta'}|_{(W_{i}\cap\Sigma^\p)/(W_i\cap\Sigma)}= {}^*W_{l_i''}(\lambda_i'')$. We have $\sigma_1=\max\{l_i''\}\geq\max\{l_i'-1\}=\mu_1'-1$ and $\sigma_1+\delta_1=\max\{\lambda_i''\}\geq \max\{\lambda_i'-2\}=\mu_1'+\nu_1'-2$; thus c) and d) hold.

Now we prove e). Assume that $a<s$. Then $r>1$ and $c=s$. Note that $\sigma_i+\delta_i-(\sigma_{i+1}+\delta_{i+1})\geq 3$ for $i\in[a,s-1]$; hence if $\psi$ is a permutation such that $\dim (W_{\psi(i)}\cap\Sigma^\p)/(W_{\psi(i)}\cap\Sigma)\geq\dim (W_{\psi(i+1)}\cap\Sigma^\p)/(W_{\psi(i+1)}\cap\Sigma)$, then $\psi(i)=i$ for $i\in[a+1,s]$. Moreover  since $\sigma_{i+1}-\sigma_{i+2}\geq 3$ and $\delta_i-\delta_{i+1}\geq 3$ for $i\in[a,s-1]$, $l_i''=\sigma_i$ for all $i\in[a+1,s]$. It follows that $l_i''=r-1=\sigma_i$ for $i\in[1,a]$. Now  we have $l_{i}'\geq l_i''=\sigma_{i}>\sigma_{i+1}+1=l_{i+1}''+1\geq l_{i+1}'$ and  $\lambda_{i}'-l_{i}'\geq\delta_{i}>\delta_{i+1}+1\geq \lambda_{i+1}'-l_{i+1}'$ for $i\in[a+1,s-1]$, $\lambda_a'-l_a'\geq\lambda_a''-l_a''=\delta_j\geq\delta_a>\delta_{a+1}+1\geq \lambda_{a+1}'-l_{a+1}'$ for some $j\in[1,a]$; hence $l_i'=\mu_i'$ for $i\in[a+1,s]$. Then  e) follows.

\subsection{}\label{ssec-ocde}We prove \ref{ssec-o2} c)-e) for $G=SO(V,Q)$ by studying the symplectic form $\beta_{\eta'}$ on $\Sigma^\p/\Sigma$ induced by $\beta_{\xi'}$.  Let $c=\max(\mu_1'^*-1,\nu_1'^*)$.

If $c=0$, then $\mu_1'=m,\ \nu_1'=0$, $V=\spn\{v_i,i\in[0,m],u_i,i\in[0,m-1]\}$ (where $v_i,u_i$ are as in \ref{ssec-dualorbits} (b) (c)), $\Sigma=\spn\{v_0\}$ and  thus $\sigma_1^*\leq 1$, $\sigma_1=m-1,\delta_1=0$; in this case (c) and (d) hold.

If $c>0$, then there exists an orthogonal decomposition $V=\spn\{v_i,i\in[0,m],u_i,i\in[0,m-1]\}\oplus _{i=1}^c W_i$ with $W_i$ indecomposable (as $T_{\xi'}$-module), $\dim W_1\geq\dim W_2\geq\cdots\geq\dim W_s$, such that $\Sigma=(\spn\{v_i,i\in[0,m],u_i,i\in[0,m-1]\}\cap \Sigma)\oplus_{i=1}^c (W_i\cap\Sigma)$. Assume that $T_{\xi'}|_{W_i}=W_{\l_i'}(\lambda'_{i})$ (where $\lambda_{i}'=\mu_{i+1}'+\nu_i'$; notation as in \cite[5.6]{X2}). One easily verifies that $T_{\eta'}|_{(W_i\cap\Sigma^\p)/(W_i\cap\Sigma)}= W_{l_i'-1}(\lambda_{i}'-2)$ (if $\dim(W_i\cap\Sigma)=2$), $W_{l_i'-1}(\lambda_{i}'-1)$ or $W_{l_i'}(\lambda_{i}'-1)$ (if $\dim(W_i\cap\Sigma)=1$).  Write $T_{\eta'}|_{(W_i\cap\Sigma^\p)/(W_i\cap\Sigma)}=W_{l_i''}(\lambda_i'')$. Then $l_i'-1\leq l_i''\leq l_i'$ and $\lambda_i''\geq\lambda'_i-2$. We have $\mu_1'-1\leq \sigma_1\leq\mu_1'$, and $\delta_1=\max\{l_i'',\lambda_1''-\sigma_1\}\geq\max\{l_i'-1,\mu_2'+\nu_1'-2-\sigma_1\}$; thus $\delta_1\geq\nu_1'-2$ if $\sigma_1=\mu_1'$ and $\delta_1\geq\nu_1'-1$  if $\sigma_1=\mu_1'-1$ (note that $\nu_1'=\max\{l_i',\mu_2'+\nu_1'-\mu_1'\}$). Hence c) and d) follow.

We prove e). Assume that $b<s$. Then $r>1$ and $c=s$. Note that $\sigma_{i+1}+\delta_i-(\sigma_{i+2}+\delta_{i+1})\geq 3$ for $i\in[b,s-1]$; hence
 if $\psi$ is a permutation such that $\dim (W_{\psi(i)}\cap\Sigma^\p)/(W_{\psi(i)}\cap\Sigma)\geq\dim (W_{\psi(i+1)}\cap\Sigma^\p)/(W_{\psi(i+1)}\cap\Sigma)$, then $\psi(i)=i$ for $i\in[b+1,s]$. Moreover  for all $i\in[b+1,s]$, $l_i''=\delta_i$  since $\sigma_{i}-\sigma_{i+1}\geq 3$ and $\delta_{i}-\delta_{i+1}\geq 3$. It follows that $l_i''=r-1=\delta_i$ for all $i\in[1,b]$. Now  we have $l_{i}'\geq l_i''=\delta_{i}>\delta_{i+1}+1=l_{i+1}''+1\geq l_{i+1}'$ and $\lambda_{i}'-l_{i}'\geq\sigma_{i+1}>\sigma_{i+2}+1\geq \lambda_{i+1}'-l_{i+1}'$ for $i\in[b+1,s-1]$, $\lambda_b'-l_b'\geq\lambda_b''-l_b''=\sigma_{j+1}\geq\sigma_{b+1}>\sigma_{b+2}+1\geq \lambda_{b+1}'-l_{b+1}'$ for some $j\in[1,b]$ or (if $b=0$) $m\geq\sigma_{b+1}>\sigma_{b+2}+1\geq \lambda_{b+1}'-l_{b+1}'$; hence $l_i'=\nu_i'\geq\delta_i$ and $\mu_{i+1}'=\lambda_i'-l_i'\geq\sigma_{i+1}$ for $i\in[b+1,s]$.  This completes the proof of Theorem \ref{mainthm}.

\section{Nilpotent pieces in $\Lg^*$ in type $B$, $C$ in characteristic 2}\label{sec-comdef}

Assume that $p=2$ and $G$ is of type $B$ or $C$ throughout this section unless otherwise stated.
\subsection{}\label{ssec-comb1} Suppose that $G=Sp(2n)$ (resp. $SO(2n+1)$). Let $\LX_R^1\subset\LX$ denote the image of the Springer correspondence map $\gamma_{G_\mathbb{C}}:\underline{\cU_{G_\mathbb{C}}}\to W^\wedge$, where as before $R$ stands for $B$ or $C$ (the type of $G$ or $G_{\mathbb{C}}$).  We have (see \cite{Lu3,X2}) \\[5pt]\indent\qquad$\LX_{C}^1=\{(\mu)(\nu)|\mu_{i+1}-1\leq\nu_i\leq\mu_i+1\}$,  $\LX_{B}^1=\{(\mu)(\nu)\in|\mu_{i+1}\leq\nu_i\leq\mu_i+2\}$.\\[5pt]
Let $\tilde{\tau}\in\LX_R^1$ and let $\rc\in\underline{\cN_{\Lg^*}}$ be such that $\gamma_{\Lg^*}(\rc)=\tilde{\tau}$. Define $\Sigma_{\tilde{\tau}}^{\Lg^*}$
to be the set of all orbits $\rc'\in\underline{\cN_{\Lg^*}}$ such that $\rc'\leq \rc$
and
$\rc'\nleq\rc''$ for any
$\rc''<\rc$  with
$\gamma_{\Lg^*}(\rc'')\in\LX_R^1$. We show that \\[5pt]
\indent \qquad(a) {\em$\{\Sigma_{\tilde{\tau}}^{\Lg^*}\}_{\tilde{\tau}\in\LX_R^1}$ form a partition of  $\cN_{\Lg^*}$.}

Following \cite{X2}, we define maps
$$ \Phi_R: \LX_{R}^{*2}\rightarrow\LX_R^1,\
(\mu)(\nu)\mapsto(\tilde{\mu})(\tilde{\nu}),$$
where if $R=B$, then
\begin{eqnarray*}\label{phi-soodd1}
&&\tilde{\mu}_i=\left\{\begin{array}{ll}[\frac{\mu_i+\nu_{i}-1}{2}]&\text{
if }\nu_i>\mu_i+2\\ \mu_i&\text{ if
}\nu_i\leq\mu_i+2\end{array}\right.,\quad
\tilde{\nu}_i=\left\{\begin{array}{ll}[\frac{\mu_{i}+\nu_{i}+2}{2}]&\text{
if }\nu_i>\mu_{i}+2\\ \nu_i&\text{ if
}\nu_i\leq\mu_{i}+2\end{array}\right.,\ i\geq1;
\end{eqnarray*}
if $R=C$, then $\tilde{\mu}_1= \mu_1$ and
\begin{eqnarray*}\label{phi-sp1}
\tilde{\mu}_{i+1}=\left\{\begin{array}{ll}
{}[\frac{\mu_{i+1}+\nu_{i}+1}{2}]&\text{if }\nu_i<\mu_{i+1}-1\\
\mu_{i+1}& \text{if }\nu_i\geq\mu_{i+1}-1\end{array}\right. ,\ \tilde{\nu}_i=\left\{\begin{array}{lll} {}[\frac{\mu_{i+1}+\nu_{i}}{2}]&\text{if }\nu_i<\mu_{i+1}-1\\
\nu_i&\text{if
}\nu_i\geq\mu_{i+1}-1\end{array}\right.,\ i\geq1.
\end{eqnarray*}
It is easy to verify that in each case we get a well-defined element
$(\tilde{\mu})(\tilde{\nu})\in \LX^1_R$. We have\\[5pt]
\indent(b) {\em if $(\tilde{\mu}')(\tilde{\nu}')\in\LX_R^1$, $(\mu)(\nu)\in\LX^{*2}_R$, then $\Phi_R((\mu')(\nu'))=(\mu')(\nu')$, $(\mu)(\nu)\leq\Phi_R((\mu)(\nu))$; if moreover $(\mu)(\nu)\leq(\tilde{\mu}')(\tilde{\nu}')$, then $\Phi_R((\mu)(\nu))\leq(\tilde{\mu}')(\tilde{\nu}')$.}\\[5pt]
In fact, if $R=C$, (b) follows from \cite[4.2]{X2}; if $R=B$, one can prove (b) by the same argument. Now in view of Theorem \ref{mainthm}, it follows from the definition of ${\Sigma}_{\tilde{\tau}}^{\Lg^*}$ and  (b), (c) that for each $\tilde{\tau}\in\LX_R^1$,\\[5pt]
\indent\qquad(d)  {\em $\gamma_{\Lg^*}({\Sigma}_{\tilde{\tau}}^{\Lg^*})=\Phi_R^{-1}(\tilde{\tau})$.}\\[5pt]
Then (a) follows from (d).

\subsection{}\label{ssec-map} We define  a map $$\Psi_{R}^*:\LN_{R}^{*2}\rightarrow \underline{\cU}_{G_{\mathbb{C}}},\ \rc\mapsto\tilde{\lambda}=(\tilde{\lambda}_1\geq\tilde{\lambda}_2\geq\cdots)$$ as follows such that each fiber is an nilpotent piece (see Proposition \ref{prop-psi}). 

Assume that $G=Sp(2n)$ and $\rc=(\lambda,\chi)\in\LN_{C}^{*2}$. If $\chi(\lambda_{1})>\frac{\lambda_{1}}{2}$, then $\tilde{\lambda}_1=2\chi(\lambda_1)$, if  $\chi(\lambda_{2i})>\frac{\lambda_{2i}}{2}$ and $\chi(\lambda_{2i})>\chi(\lambda_{2i+1})$, then $$\tilde{\lambda}_{2i}=\left\{\begin{array}{lll}\lambda_{2i}-\chi(\lambda_{2i})+\chi(\lambda_{2i+1})&\text{ if }\chi(\lambda_{2i})-\lambda_{2i}+\chi(\lambda_{2i+1})\geq1\\
2(\lambda_{2i}-\chi(\lambda_{2i}))&\text{ if }\chi(\lambda_{2i})-\lambda_{2i}+\chi(\lambda_{2i+1})\leq0\end{array}\right.,$$ if   $\chi(\lambda_{2i+1})>\frac{\lambda_{2i+1}}{2}$ and $\lambda_{2i+1}-\chi(\lambda_{2i+1})<\lambda_{2i}-\chi(\lambda_{2i})$, then $$\tilde{\lambda}_{2i+1}=\left\{\begin{array}{lll}\lambda_{2i}-\chi(\lambda_{2i})+\chi(\lambda_{2i+1})&\text{ if }\chi(\lambda_{2i})-\lambda_{2i}+\chi(\lambda_{2i+1})\geq1\\
2\chi(\lambda_{2i+1})&\text{ if }\chi(\lambda_{2i})-\lambda_{2i}+\chi(\lambda_{2i+1})\leq0\end{array}\right.;$$ otherwise $\tilde{\lambda}_{i}={\lambda}_{i}$.

Assume  that $G=SO(2n+1)$ and $\rc=(m;(\lambda,\chi))\in\LN_{B}^{*2}$. Let
$$\tilde{\lambda}_{1}=\left\{\begin{array}{lll}m+\chi(\lambda_{1})&\text{ if }\chi(\lambda_{1})\geq m+2\\
2m+1&\text{ if }\chi(\lambda_{1})<m+2\end{array}\right.,\ \tilde{\lambda}_{2}=\left\{\begin{array}{lll}m+\chi(\lambda_{1})&\text{ if }\chi(\lambda_{1})\geq m+2\\
2\chi(\lambda_1)-1&\text{ if }[\frac{\lambda+1}{2}]<\chi(\lambda_{1})<m+2\\\lambda_1&\text{if }\chi(\lambda_{1})\leq[\frac{\lambda+1}{2}]\end{array}\right..$$
For $i\geq 1$, if $\chi(\lambda_{2i})>\frac{\lambda_{2i}}{2}$ and $\chi(\lambda_{2i})>\chi(\lambda_{2i+1})$, then $$\tilde{\lambda}_{2i+1}=\left\{\begin{array}{lll}\lambda_{2i}-\chi(\lambda_{2i})+\chi(\lambda_{2i+1})&\text{ if }\chi(\lambda_{2i})-\lambda_{2i}+\chi(\lambda_{2i+1})\geq2\\
2(\lambda_{2i}-\chi(\lambda_{2i}))+1&\text{ if }\chi(\lambda_{2i})-\lambda_{2i}+\chi(\lambda_{2i+1})\leq1\end{array}\right.,$$ if   $\chi(\lambda_{2i+1})>\frac{\lambda_{2i+1}}{2}$ and $\lambda_{2i+1}-\chi(\lambda_{2i+1})<\lambda_{2i}-\chi(\lambda_{2i})$, then $$\tilde{\lambda}_{2i+2}=\left\{\begin{array}{lll}\lambda_{2i}-\chi(\lambda_{2i})+\chi(\lambda_{2i+1})&\text{ if }\chi(\lambda_{2i})-\lambda_{2i}+\chi(\lambda_{2i+1})\geq2\\
2\chi(\lambda_{2i+1})-1&\text{ if }\chi(\lambda_{2i})-\lambda_{2i}+\chi(\lambda_{2i+1})\leq1\end{array}\right.;$$ otherwise $\tilde{\lambda}_{2i+1}={\lambda}_{2i}$, $\tilde{\lambda}_{2i+2}={\lambda}_{2i+1}$.

We show  that\\[5pt]
\indent\qquad (a) {\em $\gamma_{G_{\mathbb{C}}}\circ\Psi_{R}^*=\Phi_R\circ\gamma_{\Lg^*}.$}\\[5pt]
Let  $G=SO(2n+1)$ (resp. $Sp(2n)$) and $\rc\in\LN_{R}^{*2}$. Assume that  $\gamma_{\Lg^*}(\rc)=(\mu)(\nu)$, $\Phi_R((\mu)(\nu))=(\tilde{\mu})(\tilde{\nu})$ and $(\tilde{\mu})(\tilde{\nu})=\gamma_{G_{\mathbb{C}}}(\tilde{\lambda})$.  Using the definition of $\Phi_R$, one easily shows that $\tilde{\nu}_i=\tilde{\mu}_i+2$ iff $\nu_i\geq\mu_i+2$ and $\mu_i+\nu_i$ is even, and $\tilde{\nu}_i=\tilde{\mu}_{i+1}$ iff $\nu_i=\mu_{i+1}$ (resp. $\tilde{\nu}_i=\tilde{\mu}_i+1$ iff $\nu_i=\mu_i+1$, and $\tilde{\nu}_i=\tilde{\mu}_{i+1}-1$ iff $\nu_i\leq\mu_{i+1}-1$ and $\nu_i+\mu_{i+1}$ is odd). Using this and the description of the map $\gamma_{G_{\mathbb{C}}}$ (see \cite[2.4]{X2}), one easily verifies that
 $$\tilde{\lambda}_{2i-1}=\left\{\begin{array}{ll}\mu_i+\nu_i&\text{ if }\mu_i\leq\nu_i-2\\2\mu_i+1&\text{ if }\nu_i-2<\mu_i<\nu_{i-1}\\2\mu_i&\text{ if }\mu_i=\nu_{i-1}\end{array}\right.,\ \tilde{\lambda}_{2i}=\left\{\begin{array}{ll}\mu_i+\nu_i&\text{ if }\nu_i\geq\mu_i+2\\2\nu_{i}-1&\text{ if }\mu_{i+1}<\nu_i<\mu_{i}+2\\2\nu_i&\text{ if }\nu_i=\mu_{i+1}\end{array}\right.$$
  $$\left(\text{resp. }\tilde{\lambda}_{2i-1}=\left\{\begin{array}{ll}\mu_i+\nu_{i-1}&\text{ if }\mu_i\geq\nu_{i-1}+1\\2\mu_i&\text{ if }\nu_i\leq\mu_i\leq\nu_{i-1}\\2\mu_i+1&\text{ if }\mu_i=\nu_i-1\end{array}\right.,\ \tilde{\lambda}_{2i}=\left\{\begin{array}{ll}\mu_{i+1}+\nu_{i}&\text{ if }\nu_i\leq\mu_{i+1}-1\\2\nu_i&\text{ if }\mu_{i+1}\leq\nu_i\leq\mu_{i}\\2\nu_i-1&\text{ if }\nu_i=\mu_i+1\end{array}\right.\right).$$
It is then easy to verify that $\tilde{\lambda}=\Psi_R^*(\rc)$ using the description of $\gamma_{\Lg^*}$ (see \ref{ssec-spcp}).

\begin{proposition}\label{prop-psi}
Two orbits $\rc_1,\rc_2\in \underline{\cN_{\Lg^*}}$  lie in the same   nilpotent piece as defined in \cite{L4,X4} if and only if $\Psi_{R}^*(\rc_1)=\Psi_{R}^*(\rc_2)$.
\end{proposition}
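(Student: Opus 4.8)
The plan is to reduce the Proposition to the explicit computation of the numerical invariant $\Upsilon_\rc=(f_a)_{a\in\mathbb{N}}$ attached to an orbit by its canonical filtration, and to identify $\Upsilon_\rc$ in terms of the partition $\tilde\lambda=\Psi^*_R(\rc)$. Recall that by the Lemma in \ref{ssec-p}, $\rc_1$ and $\rc_2$ lie in the same nilpotent piece of \cite{L4,X4} if and only if $\Upsilon_{\rc_1}=\Upsilon_{\rc_2}$. So it suffices to show that $\Upsilon_\rc$ and $\Psi^*_R(\rc)$ determine each other, and in fact I claim the precise formula
\begin{equation*}
\Upsilon_\rc=(f_a)_{a\in\mathbb{N}},\qquad f_a=\sum_{i\in\mathbb{N}}m_{\tilde\lambda}(a+2i+1),\quad\tilde\lambda=\Psi^*_R(\rc),
\end{equation*}
i.e. that the ``shape'' of the nilpotent piece containing $\rc$ in characteristic $2$ is the same as the shape produced in \ref{ssec-p}(a$'$) by the characteristic-$0$ partition $\tilde\lambda$. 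One half of the desired equivalence is then purely formal: the assignment $\tilde\lambda\mapsto(f_a)_{a\in\mathbb{N}}$ is injective, since $f_a-f_{a+2}=m_{\tilde\lambda}(a+1)$ recovers all multiplicities of $\tilde\lambda$. Combined with \ref{ssec-comb1}(d) and \ref{ssec-map}(a) and the injectivity of $\gamma_{G_{\mathbb{C}}}$, the displayed identity also yields that the nilpotent pieces coincide with the MS-pieces $\Sigma^{\Lg^*}_{\tilde\tau}$, which is the other assertion implicit in this section.

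To prove the displayed identity I would fix $\xi\in\rc$ and run the inductive definition of the canonical filtration $V_*$ directly: \ref{lem-sp1} for type $C$ and \ref{lem-o1} for type $B$. In the symplectic case one starts from the orthogonal decomposition $V=\oplus_a W(a)$ of \ref{lem-sp2}, with $\alpha_\xi|_{W(a)}={}^*W_{\chi(\lambda_{2a})}(\lambda_{2a})$; the global invariants of the first step are $e=\lambda_1$, $f=\chi(\lambda_1)$ and $N=\max(\lambda_1-1,2\chi(\lambda_1)-1)$, and passing from $V$ to $V'=V_{\geq-N+1}/V_{\geq N}$ replaces a summand with $\chi(\lambda_{2a})>\lambda_{2a}/2$ that governs $f$ by ${}^*W_{\chi(\lambda_{2a})-1}(\lambda_{2a}-1)$ via \ref{lem-sp2}(i), replaces a summand with $\chi(\lambda_{2a})\leq\lambda_{2a}/2$ that governs $e$ by ${}^*W_{\chi(\lambda_{2a})-1}(\lambda_{2a}-2)$ via \ref{lem-sp2}(ii), and essentially preserves the remaining summands. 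Reading off $\dim V_{\geq a}/V_{\geq a+1}$ shell by shell and matching against the piecewise definition of $\Psi^*_C$ (and of $\gamma_{G_{\mathbb{C}}}$, \cite[2.4]{X2}) gives the claim. In the odd orthogonal case the same scheme applies, but one must in addition track the integer $m$ and the vectors $\{v_i\},\{u_i\}$ spanning the ``$m$-part'', working through the several mutually exclusive branches in the formula for $V_{\geq-N+1}$ in \ref{lem-o1}.

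The main obstacle is exactly this bookkeeping. The recursion peels off a single ``shell'' of $V$ using the global data $(e,f,N)$, so summands of different sizes are affected non-uniformly in one step: some lie inside $V_{\geq-N+1}$ and are untouched, some lose one or two degrees, and one must also control the intersections of the boundary subspaces $V_{\geq N}$ and $V_{\geq-N+1}$ with each individual summand. Verifying that the cumulative effect of the whole recursion reproduces precisely the branches in the definition of $\tilde\lambda_i=\tilde\lambda_i(\lambda,\chi)$ is a somewhat lengthy case analysis, and for type $B$ it is heavier still, because of the many cases for $V_{\geq-N+1}$ in \ref{lem-o1} (including the dichotomy $\rho=0$ versus $\rho\neq0$ and the auxiliary vector $w_{**}$) and because the parameter $m$ decreases along the recursion. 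Everything outside this computation --- the reduction via the Lemma in \ref{ssec-p}, the injectivity of $\tilde\lambda\mapsto(f_a)_{a\in\mathbb{N}}$, and the passage to ``pieces $=$ MS-pieces'' using \ref{ssec-comb1}(d), \ref{ssec-map}(a) and the injectivity of $\gamma_{G_{\mathbb{C}}}$ --- is formal.
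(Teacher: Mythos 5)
Your strategy is essentially the one the paper uses: reduce Proposition \ref{prop-psi} to the equality $\Upsilon_\rc=\Upsilon_{\tilde\rc}$ where $\tilde\rc=\Psi^*_R(\rc)$ (i.e.\ to the formula $f_a=\sum_{i\in\mathbb{N}}m_{\tilde\lambda}(a+2i+1)$ you display, which is exactly \ref{pf-char2}(a) combined with \ref{ssec-p}(a$'$)), establish injectivity of $\tilde\lambda\mapsto(f_a)$ formally, and prove the identity by induction on $\dim V$ along the canonical filtration. You also correctly flag the ``MS-pieces $=$ nilpotent pieces'' consequence via \ref{ssec-comb1}(d) and \ref{ssec-map}(a). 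So far this is the paper's plan exactly.

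What your sketch does not pin down is the organizing step that makes the induction close. The paper isolates the claim (\ref{pf-char2}(b)) that after one shell one has $\tilde\lambda_1=N+1$, $m_{\tilde\lambda}(\tilde\lambda_1)=f_N$, and $\tilde\lambda'=\Psi^*_{R}(\rc')$ agrees with $\tilde\lambda$ except in multiplicities at $\tilde\lambda_1$ and $\tilde\lambda_1-2$; this is what lets the inductive hypothesis $f'_a=\tilde f'_a$ propagate to $f_a=\tilde f_a$. Proving (b) is not ``one summand is replaced via \ref{lem-sp2}(i), another via \ref{lem-sp2}(ii), the rest preserved'': which summands are touched, and whether both phenomena occur at once, depends on a genuine case split (three cases for $Sp$: $e=2f+1$ or $e=2f,\chi(e-1)=f-1$; $e=2f,\chi(e-1)=f$; $e<2f$; and five for $SO(2n+1)$ because of the $m$-part and the $\rho=0$ versus $\rho\neq0$ dichotomy). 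One must also recompute the new $\chi'$ globally across all summands, not just on the pieces that shrink, and then match the result against the piecewise definition of $\Psi^*_R$ (the paper does this in \ref{ssec-sppf-2}--\ref{ssec-sppf-3} and \ref{pf-char2-5}--\ref{pf-char2-6}). You correctly anticipate that this bookkeeping is the main content, but as written the sketch elides exactly that content; in particular nothing is said about how $\Psi^*_B$ behaves under $m\mapsto m-1$, or how the branches of $\Psi^*_R$ in \ref{ssec-map} line up with the cases of the filtration. Carrying out \ref{pf-char2}(b) case by case is what would turn your outline into a proof.
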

Note that the proposition computes the nilpotent pieces in $\Lg^*$ explicitly. Now in view of  (a) and \ref{ssec-comb1} (d), it follows from Proposition \ref{prop-psi} that each set $\Sigma_{\tilde{\tau}}^{\Lg^*}$ is a nilpotent piece defined in \cite{L4,X4}.
One can also define a partition of $\cN_{\Lg^*}$ into special pieces as in \cite{Lu7,X2} and show that each special piece is a union of  nilpotent pieces. The proof of the proposition is given in the remainder of this section following the argument used in \cite{X2}.

\subsection{}\label{pf-char2}Suppose that $G=Sp(V,\la,\ra)$ (resp. $SO(V,Q)$). Let $\rc\in\underline{\cN_{\Lg^*}}$ and
$\Psi_{R}^*(\rc)=\tilde{\rc}=\tilde{\lambda}$. Suppose that $\Upsilon_{\rc}=(f_a)_{a\in\mathbb{N}}$ and $\Upsilon_{\tilde{\rc}}=(\tilde{f}_a)_{a\in\mathbb{N}}$ (see Subsection \ref{ssec-p}).
We show that\\[5pt]
\indent\qquad (a) {\em $f_a=\tilde{f}_a$ for all ${a\in\mathbb{N}}$}.\\[5pt]
Then Proposition \ref{prop-psi} follows from (a) and Lemma
\ref{lempiece}.

We prove (a) by induction
on $\dim V$. Let  $\xi\in\rc$.
If $\xi=0$, (a) is obvious. Assume  from now on that $\xi\neq0$.
Let $V_*=(V_{\geq a})$,  $V'$, $\xi'$  be associated to $\xi$  and $N$, $e$, $f$ defined for $\xi$ as in
\ref{lem-sp1} (resp. \ref{lem-o1}).   Let $\rc'$ be the orbit of $\xi'$ in  $\Lg'^*$ and let
$\tilde{\rc}'=\Psi_{\Lg'^*}^2(\rc')=\tilde{\lambda}'$.
Suppose that $\Upsilon_{\rc'}=(f_a')$ and $\Upsilon_{\tilde{\rc}'}=(\tilde{f}_a').$  Since $\dim V'<\dim V$, by induction hypothesis
$f_a'=\tilde{f}_a'$ for all $a\in\mathbb{N}$. By the definition of $V_*$ we have that for all $a\in[0,N-1]$, $f_a=f_a'$  and thus $f_a=\tilde{f}'_a$.
We show  that\\[5pt]
\indent\qquad (b) {\em $\begin{array}{l}
\tilde{\lambda}_1=N+1, m_{\tilde{\lambda}}(\tilde{\lambda}_1)=f_N,  m_{\tilde{\lambda}'}(\tilde{\lambda}_1)=0,\
m_{\tilde{\lambda}'}(\tilde{\lambda}_1-2)=m_{\tilde{\lambda}}(\tilde{\lambda}_1-2)+m_{\tilde{\lambda}}(\tilde{\lambda}_1),
\\[5pt]m_{\tilde{\lambda}'}(i)=m_{\tilde{\lambda}}(i) \text{ for all }i\neq \tilde{\lambda}_1,\tilde{\lambda}_1-2.\end{array}$}\\[5pt]
It then follows from (b) and \ref{ssec-p} (a$'$) that $\tilde{f}_a=0$ for all $a\geq N+1$, $\tilde{f}_N=f_N$, and  that $\tilde{f}_a=\tilde{f}_a'$ for all $a\in[0,N-1]$.  Hence (a) follows (note that $f_a=0$ for all $a\geq N+1$).

The proof of (b)  is given in subsections \ref{ssec-sppf1}-\ref{ssec-sppf-3} (resp. \ref{pf-char2-2}-\ref{pf-char2-6}).

\subsection{}\label{ssec-sppf1} Assume that $G=Sp(V,\la,\ra)$ throughout subsection \ref{ssec-sppf-3}. We keep the notations in \ref{pf-char2}. Suppose that $\rc=(\lambda,\chi)$. We show that\\[5pt]
\indent\qquad (a) {\em $
f_N=\left\{\begin{array}{ll}1&\text{ if }e< 2f,\\
m_{{\lambda}}(e)&\text{ if }e=2f+1,\text{ or }e=2f\text{ and }\chi(e-1)=f-1,\\
m_{{\lambda}}(e)+1&\text{if }e=2f\text{ and }\chi(e-1)=f.\end{array}\right.$}\\[5pt]
Recall that $f_N=\dim V_{\geq N} $ and $V_{\geq N}= V_{\geq -N+1}^\p$ (see \ref{lem-sp1}). We describe $V_{\geq N}$ in various cases in the following and then (a) follows.\\[5pt]
\indent Suppose that $e<2f$. Then the map $\rho:V\to\tk,\ v\mapsto\sqrt{\alpha_\xi(T_\xi^{f-1}v)}$ is linear and thus $V_{\geq N}=(\ker\rho)^\p$ is a line.\\[5pt]
\indent Suppose that $e=2f+1$, or $e=2f$ and $\chi(e-1)=f-1$. Then $V_{\geq N}=(\ker T_\xi^{e-1})^\p=\text{Im}T_\xi^{e-1}$.\\[5pt]
\indent Suppose that $e=2f$ and $\chi(e-1)=f$. Let $E$ be subspace of $V$ such that $V=\ker T_\xi^{e-1}\oplus E$ and let $W=\sum_{i\in[0,e]}T_\xi^i E$. Then $\la,\ra|_W$ is non-degenerate (in fact, if $\la\sum_iT_\xi^iv_i, W\ra=0$, where $v_i\in E$, then $\la T_\xi^{e-1}v_0,V\ra=0$ and thus $v_0\in E\cap\ker T_\xi^{e-1}=0$; now use induction and similar argument one shows that $v_i=0$). Thus $V=W\oplus W^\p$, $W^\p$ is $T_\xi$-stable and $T_\xi^{e-1}W^\p=0$ ($W\supset T_\xi^{e-1}V$ implies that $W^\p\subset(\text{Im} T_\xi^{e-1})^\p=\ker T_\xi^{e-1}$). We have $V_{\geq-N+1}=(\ker T_\xi^{e-1}\cap W)\oplus\{v\in W^\p|\alpha_\xi(T_\xi^{f-1}v)=0\}$ (note that $\ker T_\xi^{e-1}\cap W\subset\text{Im}T_\xi$). Thus $V_{\geq N}=\text{Im}T_\xi^{e-1}\oplus L$, where $L\subset W^\p$ is a line (we apply the result in the first case for $W^\p$).

\subsection{}\label{ssec-sppf-2}We describe $\rc'=(\lambda',\chi')$  in various cases as follows.   Let  $j\geq 0$ be the unique integer such that $\chi(e-j)=f\text{ and }\chi(e-j-1)<f.$\\[5pt]
 (i) $e=2f+1$, or $e=2f$ and $\chi(e-1)=f-1$. We have \\[5pt]\indent {\em $m_{\lambda'}(e)=0,\ m_{\lambda'}(e-2)=m_\lambda(e)+m_\lambda({e-2})$ (if $e>2$),$\ m_{\lambda'}(i)=m_{\lambda}(i)\text{ for }i\notin\{e,e-2\}$, $ \chi'(\lambda_i)=\chi(\lambda_i)$ for $\lambda_i\notin\{e, e-2\}$, $\chi'(e-2)=f-1$ if $\chi(e-2)\leq f-1$ and $\chi'(e-2)=f$ if $\chi(e-2)=f$.}\\[5pt]
 (ii) $e=2f$ and $\chi(e-1)=f$. We have \\[5pt]\indent{\em$m_{\lambda'}(e)=0,\ m_{\lambda'}(e-2)=m_{\lambda}(e-2)+m_{\lambda}(e)+2\delta_{j,1}-2\delta_{j,2}$ (if $e>2$), $ m_{\lambda'}(e-j)=m_{\lambda}(e-j)-2+\delta_{j,2}m_\lambda(e)$,$\ m_{\lambda'}(e-j-1)=m_{\lambda}(e-j-1)+2+\delta_{j,1}m_\lambda(e)$ (if $e>j+1$), $ m_{\lambda'}(i)=m_{\lambda}(i)\text{ for }i\notin\{ e,e-2,e-j,e-j-1\}$, $\chi'(e-k)=f-1\text{ for }k\in[1,j],\ \chi'(i)=\chi(i)$ for $i\leq e-j-1$.}\\[5pt]
 (iii) $e<2f$. We have \\[5pt]\indent{\em $m_{\lambda'}(e-j)=m_\lambda(e-j)-2$,$\ m_{\lambda'}(e-j-1)=m_\lambda(e-j-1)+2$  (if $e>j+1$),$\ m_{\lambda'}(i)=m_{\lambda}(i)\text{ for }i\notin\{ e-j,e-j-1\}$, $\chi'(e-k)=f-1\text{ for }k\in[0,j],\ \chi'(i)=\chi(i)$ for $i\leq e-j-1$.}

Let $\alpha_{\xi'}$ and $T_{\xi'}$ be defined for $\xi'$ as before. Let $m_\lambda(e-i)=2d_i$.

Assume that we are in case (i).  We have a decomposition $V=W\oplus Y$  of $V$ into mutually orthogonal $T_\xi$-stable subspaces such that\\[5pt]
\indent\hspace{1.25in} $\alpha_\xi|_W={}^*W_f(e)^{{d_0}} $ and $T_\xi^{e-1}Y=0$.\\[5pt] Then $V_{\geq -N+1}=(\ker T_\xi^{e-1}\cap W)\oplus Y$ and $V_{\geq N}=T_\xi^{e-1}W$. Hence we have a natural decomposition  $V'=W'\oplus Y$ of $V'$ into mutually orthogonal $T_{\xi'}$-stable subspaces, where $W'=(\ker T_\xi^{e-1}\cap W)/T_\xi^{e-1}W$, and  (see \ref{lem-sp2} (ii))\\[5pt]
\indent\hspace{1.25in} $\alpha_{\xi'}|_{W'}={}^*W_{f-1}(e-2)^{{d_0}} ,\ \alpha_{\xi'}|_Y=\alpha_{\xi}|_Y$. \\[5pt]
We have  $\chi'(i)=\max(\chi_{{\alpha_{\xi'}}|_{W'}}(i),\chi_{\alpha_{\xi}|_Y}(i))$ and $\chi(i)=\max(\chi_{\alpha_{\xi}|_W}(i),\chi_{\alpha_{\xi}|_Y}(i))$. For $0<\lambda_i\leq e-3$, $\lambda_i-\chi(\lambda_i)\leq\frac{\lambda_i+1}{2}<e-f$ and thus $\chi_{\alpha_\xi|_W}(\lambda_i)=\max(0,\lambda_i-e+f)<\chi(\lambda_i)$, which implies that $\chi_{\alpha_\xi|_Y}(\lambda_i)=\chi(\lambda_i)$ and thus $\chi'(\lambda_i)=\max(\max(\lambda_i-e+f+1,0),\chi_{\alpha_\xi|_Y}(\lambda_i))=\chi(\lambda_i)$.  Now $\chi'(e-2)=\max(f-1,\chi_{\alpha_{\xi}|_Y}(e-2))$ and $\chi(e-2)=\max(f-2,\chi_{\alpha_{\xi}|_Y}(e-2))$, thus the assertion on $\chi'(e-2)$ holds; $\chi'(e-1)=\chi(e-1)$ since $\chi_{{\alpha_{\xi'}}|_{W'}}(e-1)=\chi_{{\alpha_{\xi}}|_{W}}(e-1)$.

Assume that we are in case (ii).  Then $j\geq 1$.  We have a decomposition $V=W_0\oplus W_1\oplus Y$ of $V$ into mutually orthogonal $T$-stable subspaces such that (we use \cite[Lemma 2.9]{X5})\\[5pt]
\indent \qquad$\alpha_\xi|_{W_0}={}^*W_{f}(e)^{{d_0}} $, $\alpha_\xi|_{W_1}={}^*W_f(e-j)$, $T_\xi^{e-1}Y=0$ and $\alpha_\xi(T_\xi^{f-1}Y)=0$.\\[5pt]
Then $V_{\geq -N+1}=(\ker T_\xi^{e-1}\cap{W_0})\oplus K_{W_1}\oplus Y$ and $V_{\geq N}=T_\xi^{e-1}W_0\oplus L_{W_1},$ where $K_{W_1}=\{v\in W_1|\alpha_\xi(T_\xi^{f-1}v)=0\}$ and $L_{W_1}=K_{W_1}^{\p}\cap W_1$. Hence we have a natural decomposition $V'=W_0'\oplus W_1'\oplus Y$ of $V'$ into mutually orthogonal $T_{\xi'}$-stable subspaces, where $W_0'=(\ker T_\xi^{e-1}\cap{W_0})/T_\xi^{e-1}W_0,\ W_1'=K_{W_1}/L_{W_1}$, and (see \ref{lem-sp2} (i) (ii))\\[5pt]
\indent\qquad$\alpha_{\xi'}|_{W_0'}={}^*W_{f-1}(e-2)^{{d_0}} ,\ \alpha_{\xi'}|_{W_1'}={}^*W_{f-1}(e-j-1),\ \alpha_{\xi'}|_Y=\alpha_\xi|_Y$.\\[5pt]
We have $\chi'(i)=\max(\chi_{\alpha_{\xi'}|_{W_0'}}(i),\chi_{\alpha_{\xi'}|_{W_1'}}(i),\chi_{\alpha_{\xi}|_Y}(i))$ and $\chi(i)=\max(\chi_{\alpha_{\xi}|_{W_0}}(i),\chi_{\alpha_{\xi}|_{W_1}}(i),\chi_{\alpha_{\xi}|_Y}(i))$. Thus for $e-j\leq i\leq e-1$, $\chi'(i)=f-1$ (note that $\chi_{\alpha_{\xi}|_Y}(i)\leq f-1$); for $i\leq e-j-1$, $\chi_{\alpha_{\xi}|_{W_0}}(i)\leq\chi_{\alpha_{\xi'}|_{W_0'}}(i)\leq\chi_{\alpha_{\xi'}|_{W_1'}}(i)=\chi_{\alpha_{\xi}|_{W_1}}(i)$ (since $j\geq 1$) and thus $\chi'(i)=\chi(i)$.

Assume that we are in case (iii).  We have a decomposition of $V$ into mutually orthogonal $T_\xi$-stable subspaces $V=W\oplus Y$  such that (we use  \cite[Lemma 2.9]{X5})\\[5pt]
\indent \hspace{1.25in} $\alpha_{\xi}|_W={}^*W_{f}(e-j)$  and $\alpha_\xi(T_\xi^{f-1}Y)=0$.\\[5pt]
Then $V_{\geq -N+1}=K_W\oplus Y$ and $ V_{\geq N}=L_W,$ where $K_W=\{v\in W|\alpha_\xi(T_\xi^{f-1}v)=0\}$ and $L_W=K_W^{\p}\cap W$. Hence we have a natural decomposition $V'=W'\oplus Y$ of $V'$ into mutually orthogonal $T'$-stable subspaces, where $W'=K_W/L_W$. Moreover (see  \ref{lem-sp2} (i))\\[5pt]
\indent\hspace{1.25in}   $\alpha_{\xi'}|_{W'}={}^*W_{f-1}(e-j-1)$ and $\alpha_{\xi'}|_Y=\alpha_{\xi}|_Y$. \\[5pt]
We have  $\chi'(i)=\max(\chi_{{\alpha_{\xi'}}|_{W'}}(i),\chi_{\alpha_{\xi}|_Y}(i))$ and $\chi(i)=\max(\chi_{\alpha_{\xi}|_W}(i),\chi_{\alpha_{\xi}|_Y}(i))$. Thus for $i\geq e-j$, $\chi'(i)=f-1$   (as $\chi_{\alpha_{\xi}|_Y}(i)\leq f-1$);  for $i\leq e-j-1$, $\chi_{{\alpha_{\xi'}}|_{W'}}(i)=\chi_{\alpha_{\xi}|_W}(i)$ and thus $\chi'(i)=\chi(i)$.

\subsection{}\label{ssec-sppf-3}Using the definition of $\Psi_{R}^*$ and the  description of $\rc'$ in \ref{ssec-sppf-2}, we compute $\Psi_{R}^*(\rc)=\tilde{\lambda}$ and $\Psi_{R'}^*(\rc')=\tilde{\lambda}'$ in each case (i)-(iii) as follows. It is then easy to check that \ref{pf-char2} (b) holds in each case.

Let $d=d_0+d_1$ in case (i) and $d=\sum_{a\in[0,j]}d_a$ in cases (ii) and (iii). We have $\tilde{\lambda}_i=\tilde{\lambda}_i'$ for all $i\geq 2d+2$, since $\mu_i=\mu_i'$  and $\nu_i=\nu_i'$ for all $i\geq d+1$. In case (i), if $\chi(e-2)\leq f-1$, then $\tilde{\lambda}_{2d+1}=\tilde{\lambda}_{2d+1}'$ since $\mu_{d+1}\leq\nu_{d}$ and $\mu_{d+1}'\leq\nu_{d}'$. In cases (ii) and (iii), $\tilde{\lambda}_{i}=\tilde{\lambda}_{i}'$ for $i=2d,2d+1$, since $\nu_{d}=\nu_{d}'$, $\nu_{d}\leq\mu_{d}$ and $\nu_{d}'\leq\mu_d'$. Let $\tilde{\lambda}^1=(\tilde{\lambda}_1,\ldots,\tilde{\lambda}_{2d+1})$ and $\tilde{\lambda}'^1=(\tilde{\lambda}'_1,\ldots,\tilde{\lambda}'_{2d+1})$. We have

\noindent(i)
$\begin{array}{ll}\tilde{\lambda}^1=e^{2d_0}(e-1)^{2d_1+1},\ \tilde{\lambda}'^1=(e-1)^{2d_1+1}(e-2)^{2d_0}&\text{if }\chi(e-2)=f,\\
\tilde{\lambda}^1=e^{2d_0}(e-1)^{2d_1}\tilde{\lambda}_{2d+1}, \ \tilde{\lambda}'^1=(e-1)^{2d_1}(e-2)^{2d_0}\tilde{\lambda}_{2d+1}&\text{if }\chi(e-2)\leq f-1;\end{array}$

\noindent(ii) 
$\begin{array}{l}\tilde{\lambda}^1=e^{2d_0+1}(e-1)^{2d_1}(e-2)^{2d_2}\cdots(e-j+1)^{2d_{j-1}}(e-j)^{2d_j-2}\tilde{\lambda}_{2d}\tilde{\lambda}_{2d+1},\\ \tilde{\lambda}'^1=(e-2)^{2d_0+1}(e-1)^{2d_1}(e-2)^{2d_2}\cdots(e-j+1)^{2d_{j-1}}(e-j)^{2d_j-2}\tilde{\lambda}_{2d}\tilde{\lambda}_{2d+1};\end{array}$

\noindent(iii) 
$\begin{array}{l}\tilde{\lambda}^1=(2f)e^{2d_0}(e-1)^{2d_1}(e-2)^{2d_2}\cdots(e-j+1)^{2d_{j-1}}(e-j)^{2d_j-2}\tilde{\lambda}_{2d}\tilde{\lambda}_{2d+1},\\ \tilde{\lambda}'^1=(2f-2)e^{2d_0}(e-1)^{2d_1}(e-2)^{2d_2}\cdots(e-j+1)^{2d_{j-1}}(e-j)^{2d_j-2}\tilde{\lambda}_{2d}\tilde{\lambda}_{2d+1}.\end{array}$

\subsection{}\label{pf-char2-2}
Assume that $p=2$ and $G=SO(V,Q)$ in the remainder of this section. Suppose that $\rc=(m,(\lambda,\chi))$. We keep the notations in \ref{pf-char2}.
 We show in this subsection that\\[5pt]
\indent\qquad (a) {\em $
f_N=\left\{\begin{array}{ll}1&\text{ if }m\geq f,\\
2&\text{ if }e-f<m<f,\\
m_{{\lambda}}(e)&\text{ if }m=0,\text{ or }0<m=e-f<f-1\text{ and }\rho=0,\\
m_{{\lambda}}(e)+1&\text{if }0<m=e-f=f-1\text{ and }\rho=0,\\
m_{{\lambda}}(e)+2&\text{if }0<m=e-f<f\text{ and }\rho\neq0.\end{array}\right.$}\\[5pt]
Recall that $f_N=\dim V_{\geq N} $ and $V_{\geq N}= V_{\geq -N+1}^\p\cap Q^{-1}(0)$ (see \ref{lem-o1}). We describe $V_{\geq N}$ in various cases in the following and then (a) follows. \\[5pt]
\indent  Suppose that $m=0$. Then $e\geq 2$ since $\xi\neq 0$. We have $V_{\geq N}=\{w+\sqrt{Q(w)}v_0|w\in\im T_\xi^{e-1}\}$, where $\sqrt{\quad}$ is a chosen square root on $\tk$. \\[5pt]
\indent Suppose that $m\geq f$. Then $V_{\geq N}=\spn\{v_0\}$. \\[5pt]
\indent  Suppose that $e-f<m<f$. Then $V_{\geq N}=\spn\{v_0\}\oplus L$, where $L=\{w\in W|Q(T_\xi^{f-1}w)=0\}^{\p}\cap W$ is a line (see \cite[5.5]{X2}). \\[5pt]
\indent Suppose that $0<m=e-f=f-1$. Then   $V_{\geq N}=\spn\{v_0\}\oplus L$, where $L=\{w\in W|T_\xi^{e-1}w=0,Q(T_\xi^{f-1}w)=0\}^{\p}\cap W$, and $\dim L=m_\lambda(e)$ if $\rho=0$, $\dim L=m_\lambda(e)+1$ if $\rho\neq0$ (see \cite[5.5]{X2}).\\[5pt]
\indent  Suppose that $0<m=e-f<f-1$ and $\rho\neq 0$. Then  $V_{\geq N}=\spn\{v_0\}\oplus L$, where $L=\{w\in W|T_\xi^{e-1}w=0,Q(T_\xi^{f-1}w)=0\}^{\p}\cap W$. Same argument as in \cite[5.5]{X2} shows that we have a decomposition $W=W_1\oplus W_2$ of $W$ into $T_\xi$-stable orthogonal subspaces such that $T_\xi^{e-1}W_2=0$ and $\chi_{T_\xi|_{W_2}}(e-1)=f$. Hence $L=\im T_\xi^{e-1}\oplus(\{x\in W_2|Q(T_\xi^{f-1}x)=0\}^{\p}\cap{W_2})$ and $\dim L=m_\lambda(e)+1$. \\[5pt]
\indent  Suppose that $0<m=e-f<f-1$ and $\rho=0$. Then  $V_{\geq N}=\{w+\beta(w,w_{**})v_0|w\in\im T_\xi^{e-1}\}$.

\subsection{}\label{pf-char2-5}We describe $\rc'=(m';(\lambda',\chi'))$  in various cases as follows.  Let  $j\geq 0$ be the unique integer such that  $\chi(e-j)=f\text{ and }\chi(e-j-1)<f.$\\[5pt]
\noindent(i) $m=0$, or $0<e-f=m<f-1$ and $\rho=0$. We have  \\[5pt]\indent{\em $m'=m$, $m_{\lambda'}(e)=0$, $m_{\lambda'}(e-2)=m_{\lambda}(e-2)+m_{\lambda}(e)$ (if $e>2$), $m_{\lambda'}(i)=m_{\lambda}(i)\text{ for }i\notin\{ e,e-2\}$, $\chi'(i)=\chi(i)$ for $i\leq e-1$.}

\noindent(ii) $m\geq f$. We have \\[5pt]\indent{\em $m'=m-1$, $m_{\lambda'}(i)=m_{\lambda}(i)\text{ for all }i$, $\chi'(\lambda_i)=\chi(\lambda_i)$ for $\lambda_i\leq e-1$, $\chi'(e)=f+1$  if $m=e-f$, and $\chi'(e)=f$  if $m>e-f$.}

\noindent (iii) $e-f<m<f$. We have\\[5pt]\indent{\em $m'=m-1,m_{\lambda'}(e-j)=m_\lambda(e-j)-2,\ m_{\lambda'}(e-j-1)=m_\lambda(e-j-1)+2$ (if $e>j+1$),$\ m_{\lambda'}(i)=m_{\lambda}(i)\text{ for }i\notin\{ e-j,e-j-1\}$, $\chi'(\lambda_i)=f-1\text{ for }e-j\leq\lambda_i\leq e-1,\ \chi'(\lambda_i)=\chi(\lambda_i)$ for $\lambda_i\leq e-j-1$, $\chi'(e)=f-1$ if $m\geq e-f+2$ and $\chi'(e)=f$ if $m=e-f+1$.}

 \noindent (iv) $0<e-f=m<f$ and $\rho\neq0$. We have\\[5pt]\indent{\em  $m'=m-1$, $m_{\lambda'}(e)=0,\ m_{\lambda'}(e-2)=m_{\lambda}(e-2)+m_{\lambda}(e)+2\delta_{j,1}-2\delta_{j,2}$ (if $e>2$), $ m_{\lambda'}(e-j)=m_{\lambda}(e-j)-2+\delta_{j,2}m_\lambda(e),\ m_{\lambda'}(e-j-1)=m_{\lambda}(e-j-1)+2+\delta_{j,1}m_\lambda(e)$ (if $e>j+1$), $ m_{\lambda'}(i)=m_{\lambda}(i)\text{ for }i\notin\{ e,e-2,e-j,e-j-1\}$, $\chi'(e-1)=f,\ \chi'(\lambda_i)=f-1\text{ for }e-j\leq\lambda_i\leq e-2,\ \chi'(\lambda_i)=\chi(\lambda_i)$ for $\lambda_i\leq e-j-1$.}

\noindent(v) $0<e-f=m=f-1$ and $\rho=0$. We have \\[5pt]\indent{\em $m'=m-1,m_{\lambda'}(e)=0,\ m_{\lambda'}(e-2)=m_\lambda(e)+m_\lambda({e-2})\ (\text{if }e>2),\ \ m_{\lambda'}(i)=m_{\lambda}(i)\text{ for }i\notin\{e,e-2\}$, $\chi'(e-1)=f,\  \chi'(e-2)=f-1,\  \chi'(\lambda_i)=\chi(\lambda_i)$ for $\lambda_i\leq e-2$.}

 Recall that we can choose $u_0$ (or $W$ if $m=0$) such that $\chi_W(\lambda_i)=\chi(\lambda_i)$ in the decomposition $V=\spn\{u_i,v_i\}\oplus W$. In the following $u_0$ or $W$ is chosen as such. Let $m_\lambda(e-i)=2d_i$.

Assume that we are in case (i). Suppose first that $m=0$. Then $e=f$.  There exists $w_0\in W$ such that $\beta(w_0,w)^2=Q(w)$ for all $w\in W$. Let $\tilde{W}=\{w+\beta(w_0,w)v_0|w\in W\}$. Then $V_{\geq -N+1}=\spn\{v_0\}\oplus\ker \tilde{T}_\xi^{e-1}$ and $V_{\geq N}=\im \tilde{T}_\xi^{e-1}$ (note that $Q(\tilde{T}_\xi^{e-1}\tilde{W})=0$). The description for $\rc'$ follows.
Suppose now that $0<e-f=m<f-1$ and $\rho=0$. Let $\tilde{u}_0=u_0+w_{**}$ and let $\tilde{u}_i,\tilde{W},\tilde{T}_\xi$ be defined accordingly. Then for all $\tilde{w}\in\tilde{W}$, $Q(\tilde{T}_\xi^{f-1}\tilde{w})=\beta(T_\xi^{e-1}w_{**},\pi_W(\tilde{w}))^2+Q(T_\xi^{f-1}\pi_W(\tilde{w}))=0$. Then  $V'=\spn\{v_i,i\in[0,m],\tilde{u}_i,i\in[0,m-1]\}\oplus \tilde{W}'$, where $\tilde{W}'=(\ker{\tilde{T}}_\xi^{e-1}/\im {\tilde{T}}_\xi^{e-1})$. Thus $m'=m$. Let $T_{\xi'}:\tilde{W}'\to \tilde{W}'$ be defined for $\alpha_{\xi'}$. We have ${\chi_{\tilde{W}}}(i)\leq f-1$. Suppose that $\chi_{\tilde{W}}(e)=\tilde{f}$.
We have a decomposition $\tilde{W}=W_1\oplus W_2$ of $\tilde{W}$ into mutually orthogonal $\tilde{T}_\xi$-stable subspaces such that (see \cite[5.6]{X3})\\[5pt]
\indent\hspace{1.25in} $\tilde{T}_\xi|_{W_1}=W_{\tilde{f}}(e)^{{d_0}}$ and  $T_\xi^{e-1}W_2=0$.\\[5pt] Then $\tilde{W}'=W_1'\oplus W_2$, where $W_1'=(\ker \tilde{T}_\xi^{e-1}\cap W_1)/ \tilde{T}_\xi^{e-1}W_1$ and $T_{\xi'}|_{W_1'}=W_{\tilde{f}-1}(e-2)^{{d_0}}$.   We have $\chi'(i)=\max(i-m,\chi_{T_{\xi'}|_{W_1'}}(i),\chi_{\tilde{T}_\xi|_{W_2}}(i))$ and $\chi(i)=\max(i-m,\chi_{\tilde{T}_\xi|_{W_1}}(i),\chi_{\tilde{T}_\xi|_{W_2}}(i))$. Thus $\chi'(e-1)=f-1=\chi(e-1)$ and $\chi'(i)=\max(i-m,\chi_{\tilde{T}_\xi|_{W_2}}(i))=\chi(i)$ (note that $\chi_{T_{\xi'}|_{W_1'}}(i)\leq\max(0,i-m)$ and  $\chi_{T_\xi|_{W_1}}(i)\leq\max(0,i-m)$) for all $i\leq e-2$.

Assume that we are in case (ii). Then $V'=(\spn\{v_i,u_i,i\in[0,m-1],v_m\}/\spn\{v_0\})\oplus W$. Thus $m'=m-1$ and $m_{\lambda'}(i)=m_\lambda(i)$. We have $\chi'(\lambda_i)=\max(\lambda_i-m+1,\chi(\lambda_i))$. If $m>\lambda_i-\chi(\lambda_i)$, then  $\chi'(\lambda_i)=\chi(\lambda_i)$; if $m=\lambda_i-\chi(\lambda_i)$, then $m=e-f$ and thus $e=2m$, $\lambda_i=e$ (note that $\chi(\lambda_i)\geq \lambda_i/2>\lambda_i-m$ for $\lambda_i<e=2m$) and $\chi'(e)=f+1$.

In cases (iii)-(v), we have  $V'=(\spn\{v_i,u_i,i\in[0,m-1],v_m\}/\spn\{v_0\})\oplus W'$, where $W'=\Lambda_W/(\Lambda_W^{\p}\cap W)$ for some subspace $\Lambda_W\subset W$. Thus $m'=m-1$. Let $T_{\xi'}:W'\to W'$ be defined for $\alpha_{\xi'}$. We write $T_{\xi'}=(\lambda',\chi_{W'}')$. We can apply the results in \cite[5.6]{X3} for $T_\xi$ and $T_{\xi'}$ and then in each case the assertions on $m_{\lambda'}(i)$ follow. The assertions on $\chi'$ also hold since $\chi'(\lambda_i')=\max(\lambda_i'-m+1,\chi_{W'}'(i))$ (see below for the description of $\chi_{W'}'(i)$).\\[5pt]
\indent(iii) In this case  $\Lambda_W=\{w\in W|Q(T_\xi^{f-1}w)=0\}$ and $\chi_{W'}'(e-k)=f-1\text{ for }k\in[0,j],\ \chi_{W'}'(\lambda_i)=\chi(\lambda_i)\text{ for }\lambda_i\leq e-j-1$. Note that $\lambda_i-m+1\leq\chi(\lambda_i)$ for $\lambda_i\leq e- j-1$. \\[5pt]
\indent(iv) In this case   $\Lambda_W=\{w\in W|T_\xi^{e-1}w=0,Q(T_\xi^{f-1}w)=0\}$, and $\chi'_{W'}(e-k)=f-1\text{ for }k\in[0,j],\ \chi_{W'}'(\lambda_i)=\chi(\lambda_i)\text{ for }\lambda_i\leq e-j-1$. Since $\rho\neq 0$,  $\chi(e-1)=f$ and thus  $m>\lambda_i-\chi(\lambda_i)$ for all $\lambda_i\leq e-1$. \\[5pt]
\indent(v) In this case  $\Lambda_W=\{w\in W|T_\xi^{e-1}w=0\}$ and $\chi_{W'}'(\lambda_i)=\chi(\lambda_i)\text{ for }\lambda_i\leq e-1.$ Note that for $\lambda_i\leq e-2$, $\chi(\lambda_i)\geq [\frac{\lambda_i+1}{2}]\geq \lambda_i-m+1$.

\subsection{}\label{pf-char2-6}Using the definition of $\Psi_{R}^*$ and the  description of $\rc'$ in \ref{pf-char2-5}, we compute $\Psi_{R}^*(\rc)=\tilde{\lambda}$ and $\Psi_{R'}^*(\rc')=\tilde{\lambda}'$ in each case (i)-(v) as follows. It is then easy to check that \ref{pf-char2} (b) holds in each case.

Let $d=d_0+d_1$ in cases (i) and (v); $d=d_0$ in case (ii); and $d=\sum_{a\in[0,j]}d_a$ in cases (iii) and (iv). Then $\tilde{\lambda}'_i=\tilde{\lambda}_i$ for all $i\geq 2d+3$ since $\mu_i=\mu_i'$ and $\nu_{i-1}=\nu_{i-1}'$ for all $i\geq d+2$.
In case (i), if $\chi(e-2)=f-2$, then $\mu_{d+1}=\mu_{d+1}'$ and thus $\tilde{\lambda}_{2d+2}'=\tilde{\lambda}_{2d+2}'$; if moreover $f\geq m+3$, then $\tilde{\lambda}_{2d+1}'=\tilde{\lambda}_{2d+1}'$ since $\mu_{d+1}<\nu_{d}$, $\mu_{d+1}'<\nu_{d}'$. In case (v) or in case (ii) with $m=e-f$ (then $e=2f$), $\tilde{\lambda}_{2d+2}=\tilde{\lambda}_{2d+2}'$ since $\nu_{d+1}<\mu_{d+1}+2$ and $\nu_{d+1}'<\mu_{d+1}'+2$; if $m>e-f$, then $\tilde{\lambda}_i=\tilde{\lambda}_i'$ for $i=2d+1,2d+2$ since $\mu_{d+1}=\mu_{d+1}'$ and $\nu_{d}=\nu_{d}'$. In case (iii) and (iv), $\tilde{\lambda}_i=\tilde{\lambda}_i'$ for $i=2d+1,2d+2$ since $\mu_{d+1}=\mu_{d+1}'$ and $\mu_{d+1}<\nu_{d}$, $\mu_{d+1}'<\nu_{d}'$.

Let $\tilde{\lambda}^1=(\tilde{\lambda}_1,\ldots,\tilde{\lambda}_{2d+2})$ and $\tilde{\lambda}'^1=(\tilde{\lambda}'_1,\ldots,\tilde{\lambda}'_{2d+2})$. We have

\noindent(i)  
$\begin{array}{ll}\tilde{\lambda}^1=e^{2d_0}(e-1)^{2d_1}\tilde{\lambda}_{2d+1}\tilde{\lambda}_{2d+2},\\ \tilde{\lambda}'^1=(e-1)^{2d_1}(e-2)^{2d_0}\tilde{\lambda}_{2d+1}\tilde{\lambda}_{2d+2}&\text{if }\chi(e-2)=f-2\geq m+1\\
\tilde{\lambda}^1=e^{2d_0}(e-1)^{2d_1+1}\tilde{\lambda}_{2d+2}, \ \tilde{\lambda}'^1=(e-1)^{2d_1+1}(e-2)^{2d_0}\tilde{\lambda}_{2d+2}&\text{if }\chi(e-2)=f-2=m\\
\tilde{\lambda}^1=e^{2d_0}(e-1)^{2d_1+2}, \ \tilde{\lambda}'^1=(e-1)^{2d_1+2}(e-2)^{2d_0}&\text{if }\chi(e-2)=f-1.\end{array}$

\noindent(ii) 
$\begin{array}{ll}\tilde{\lambda}^1=(e+1)e^{2d_0}\tilde{\lambda}_{2d+2},\ \tilde{\lambda}'^1=e^{2d_0}(e-1)\tilde{\lambda}_{2d+2} &\text{if }m=e-f\\
\tilde{\lambda}^1=(2m+1)(2f-1)e^{2d_0-2}\tilde{\lambda}_{2d+1}\tilde{\lambda}_{2d+2},\\ \tilde{\lambda}'^1=(2m-1)(2f-1)e^{2d_0-2}\tilde{\lambda}_{2d+1}\tilde{\lambda}_{2d+2}&\text{if }m>e-f\text{ and }e<2f;\\
\tilde{\lambda}^1=(2m+1)e^{2d_0-1}\tilde{\lambda}_{2d+1}\tilde{\lambda}_{2d+2},\ \tilde{\lambda}'^1=(2m-1)e^{2d_0-1}\tilde{\lambda}_{2d+1}\tilde{\lambda}_{2d+2}&\text{if }m>e-f\text{ and }e=2f.\end{array}$

\noindent(iii)
$\begin{array}{ll}\tilde{\lambda}^1=(m+f)^2e^{2d_0}\cdots(e-j)^{2d_j-2}\tilde{\lambda}_{2d+1}\tilde{\lambda}_{2d+2}, \\ \tilde{\lambda}'^1=(m+f-2)^2e^{2d_0}\cdots(e-j)^{2d_j-2}\tilde{\lambda}_{2d+1}\tilde{\lambda}_{2d+2}\end{array}.$

\noindent (iv) 
$\begin{array}{l}\tilde{\lambda}^1=e^{2d_0+2}(e-1)^{2d_1}(e-2)^{2d_2}\cdots(e-j+1)^{2d_{j-1}}(e-j)^{2d_j-2},\\ \tilde{\lambda}'^1=(e-2)^{2d_0+2}(e-1)^{2d_1}(e-2)^{2d_2}\cdots(e-j+1)^{2d_{j-1}}(e-j)^{2d_j-2}.\end{array}$

\noindent(v) 
$\tilde{\lambda}^1=e^{2d_0+1}(e-1)^{2d_1}\tilde{\lambda}_{2d+2},\ \tilde{\lambda}'^1=(e-1)^{2d_1}(e-2)^{2d_0+1}\tilde{\lambda}_{2d+2}.$

\section{Nilpotent coadjoint orbits in type $G_2$ in characteristic 3 and in type $F_4$ in characteristic 2}\label{sec-excep}
Assume that $G$ is of type $G_2$ and $p=3$, or $G$ of type $F_4$ and $p=2$  in this section unless otherwise stated. 
We classify the nilpotent coadjoint orbits in $\Lg^*$ and determine the closure relation among them.  We describe explicitly
the nilpotent pieces in $\Lg^*$ defined in \cite{CP}. In particular, it follows from the classification (see Subsections \ref{ssec-ex1}-\ref{ssec-exn}) that
\begin{proposition}\label{prop-number}
 The number of nilpotent coadjoint orbits in $\Lg^*$ is finite.
\end{proposition}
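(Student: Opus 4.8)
The plan is to reduce the statement to the classical fact that the nilpotent cone $\cN_\Lg\subset\Lg$ (the set of elements lying in the nilradical of some Borel subalgebra) contains only finitely many $G$-orbits — which is known in every characteristic, including the bad exceptional cases (see e.g. \cite{Spal} and the references therein) — by transporting it through a $G$-equivariant morphism $\Lg\to\Lg^*$. This works immediately for all of the pairs $(G,p)$ at hand except $(G_2,p=3)$ and $(F_4,p=2)$, which I would treat afterwards by a separate argument.

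First I would fix a nonzero $G$-invariant symmetric bilinear form $B$ on $\Lg$ — concretely, the reduction mod $p$ of the normalized invariant form defined over $\bZ$ (the one in which long roots have squared length $2$) — and set $\Lm=\{x\in\Lg\mid B(x,\Lg)=0\}$, an ideal of $\Lg$. The map $\phi_B\colon\Lg\to\Lg^*$, $x\mapsto B(x,-)$, is $G$-equivariant with kernel $\Lm$ and image $\{\xi\in\Lg^*\mid\xi(\Lm)=0\}$. Since a maximal torus acts on $\Lg$ so that $\Lg_\alpha$ and $\Lg_\beta$ are $B$-orthogonal unless $\alpha+\beta=0$ (and $\Lg_\alpha\perp\Lt$ for $\alpha\neq0$), one has $B(\Ln_\Lb,\Lb)=0$ for the nilradical $\Ln_\Lb$ of any Borel subalgebra $\Lb$, so $\phi_B$ sends $\Ln_\Lb$ into $\{\xi\mid\xi(\Lb)=0\}$, and hence $\cN_\Lg$ into $\cN_{\Lg^*}$. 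Now suppose $\Lm$ is contained in every Borel subalgebra; this holds whenever $\Lm$ equals the centre of $\Lg$, and — reading off the determinant of the $\bZ$-form on a Chevalley basis together with the known ideal structure of $\Lg$ in bad characteristic — it is the case for every bad pair with $G$ of exceptional type other than $(G_2,3)$ and $(F_4,2)$. Granting this, any $\xi\in\cN_{\Lg^*}$ vanishes on some Borel subalgebra $\Lb$, hence lies in the image of $\phi_B$, say $\xi=\phi_B(x)$ with $x\in\Lb^{\perp_B}=\Ln_\Lb+\Lm$; writing $x=x'+m$ with $x'\in\Ln_\Lb\subseteq\cN_\Lg$ and $m\in\Lm$ gives $\xi=\phi_B(x')$. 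Thus $\phi_B$ restricts to a $G$-equivariant \emph{surjection} $\cN_\Lg\to\cN_{\Lg^*}$; as a $G$-equivariant surjection carries orbits onto orbits, $\cN_{\Lg^*}$ is a finite union of $G$-orbits.

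There remain the two cases $(G,p)=(G_2,3)$ and $(F_4,2)$. Here every invariant form on $\Lg$ is degenerate and $\Lm$ is the unique proper nonzero ideal of $\Lg$ (of dimension $7$, resp. $26$); it is a sum of root spaces $\Lg_\alpha$ over the roots $\alpha$ of one fixed length together with a subspace of the Cartan subalgebra, so it contains $\Lg_\alpha\oplus\Lg_{-\alpha}$ and is not contained in any Borel subalgebra — whence $\{\xi\mid\xi(\Lm)=0\}$ is a proper $G$-submodule of $\Lg^*$ and the previous step only accounts for the $G$-orbits meeting it. For these two groups I would bring in the exceptional isogeny $\pi\colon G\to\bar G$ (of the same type), whose differential $d\pi$ has image an ideal and which, composed with its companion isogeny, is a Frobenius morphism; transposing $d\pi$ relates $\Lg^*$ to $\Lg$ and to the classical-type Lie algebras $\Lm$ and $\Lg/\Lm$, both of which do carry nondegenerate invariant forms, so that $\{\xi\mid\xi(\Lm)=0\}\cong(\Lg/\Lm)^*$ and $\Lg^*/\{\xi\mid\xi(\Lm)=0\}\cong\Lm^*$ each have finitely many nilpotent $G$-orbits by the preceding paragraph; it then remains to bound the $G$-orbits of $\cN_{\Lg^*}$ lying over a fixed orbit of $\cN_{\Lm^*}$. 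Making this last fibrewise count precise is the step I expect to be the main obstacle; failing a clean argument, it can in any event be settled by direct inspection of $\cN_{\Lg^*}=\bigcup_{\Lb}\{\xi\in\Lg^*\mid\xi(\Lb)=0\}$, since only two groups — both of rank $\le 4$, with $\dim\Lg^*\le 52$ — are involved.
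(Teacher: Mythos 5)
Your first paragraph gives a valid alternative to the paper's reduction step, and is arguably cleaner: rather than identifying $\Lg^*$ with $\Lg$ (or $\Lg_s^*$ with $\Lg_{ad}$) via a nondegenerate form or pairing as the paper does following Kac's construction, you observe that whenever the radical $\Lm$ of the invariant form lies in every Borel subalgebra (e.g.\ equals the centre) and meets no nilradical, the possibly degenerate form still induces a $G$-equivariant \emph{surjection} $\cN_\Lg\to\cN_{\Lg^*}$, which already transports finiteness. The dimension count $\Lb^{\perp_B}=\Ln_\Lb\oplus\Lm$ is correct under your hypotheses. This yields the same reduction as the paper — everything but $(G_2,p=3)$ and $(F_4,p=2)$ — by a slightly different mechanism.

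The genuine gap is in the two remaining cases, which are precisely where the paper does all the real work. The paper's proof of Proposition \ref{prop-number} in those cases is an explicit enumeration over $\tF_q$: it lists concrete representatives $\xi\in\Ln^*$ for what will turn out to be every orbit (Tables \ref{tab:1} and \ref{tab:3}), computes $|Z_G(\xi)(\tF_q)|$ via Bruhat decomposition and the coadjoint formula in \ref{ssec-ex1}, and then checks that the resulting orbit sizes sum to $q^{2N}=|\cN_{\Lg(\tF_q)^*}|$, which simultaneously proves the list is complete and that the orbit count is finite. Your proposal replaces this with (i) an isogeny argument that, as you state it, has a flaw — for the special isogenies $G_2\to G_2$ ($p=3$) and $F_4\to F_4$ ($p=2$) the ideal $\Lm$ and the quotient $\Lg/\Lm$ are \emph{not} classical Lie algebras (they are again of type $G_2$ resp.\ $F_4$, twisted by Frobenius), so the promised nondegenerate forms on them are not available by the preceding paragraph — followed by a ``fibrewise count'' you yourself flag as unfinished; and (ii) a fallback to ``direct inspection,'' which is a statement that a proof could be found, not a proof. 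Since $(G_2,3)$ and $(F_4,2)$ are exactly the new content the proposition is meant to supply, the proposal as written does not establish it.
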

As mentioned in the Introduction, Proposition is true now for any connected reductive algebraic group $G$ in any characteristic.

\subsection{}Following a suggestion of the referee, we include here references for unipotent orbits in $G$ and nilpotent orbits in $\Lg$. When $p\neq 2,3$, we can identify nilpotent orbits in $\Lg$ with unipotent orbits in $G$, which are thus both classified by Bala-Carter theory. If $G$ is of type $G_2$ and $p=2,3$, both unipotent and nilpotent orbits are classified by Stuhler in \cite{S}. If $G$ is of type $F_4$, the unipotent orbits  are classified by Shoji  in \cite{Sh} when $p=3$ and by Shinoda in \cite{Shi} when $p=2$, and the nilpotent orbits in $\Lg$  are classified by Spaltenstein in \cite{Sp4} when $p=2$ and by Holt and Spaltenstein in \cite{HS} when $p=3$. The closure relation among unipotent orbits in $G$ is determined by Spaltenstein in \cite{S5} and that among nilpotent orbits in $\Lg$ when $G$ is of type $F_4$ and $p=2$ is given in \cite{Sp4}.

\subsection{}\label{ssec-ex1}
Let $\tF_q$ be a finite field of characteristic $3$ (resp. $2$). Let $G$ be of type $G_2$ (resp. $F_4$) defined over $\tF_q$. We prove Proposition \ref{prop-number} by studying $G(\tF_q)$-orbits in $\cN_{\Lg^*}(\tF_q)$. The strategy is as follows. We specify various elements $\xi\in\cN_{\Lg^*}(\tF_q)$ which lie in different $G(\tF_q)$-orbits and compute $|Z_G(\xi)(\tF_q)|$ (the number of rational points in the centralizer $Z_G(\xi)$).
 Then by a direct computation one verifies that the numbers of rational points in all nilpotent coadjoint orbits add up to $q^{2N}$, where $N$ is the number of positive roots. As $|\cN_{\Lg^*}(\tF_{\mathbf{q}})|=q^{2N}$ (private communication by G. Lusztig, see also \cite{CP}),  we get all the $G(\tF_q)$-orbits in $\cN_{\Lg^*}(\tF_q)$.

\subsection{} \label{ssec-cen}We describe how to compute $|Z_G(\xi)(\tF_q)|$ for $\xi\in\cN_{\Lg^*}(\tF_q)$ in this subsection. 

Let $T$ be a maximal torus of $G$, $R$ the root system of $(G,T)$, $\Pi\subset R$ a set of simple roots, and $R^+\subset R$ the corresponding set of positive roots.
 We have a Chevalley basis $\{h_\alpha,\ \alpha\in \Pi;\ e_\alpha,\ \alpha\in R\}$ of $\Lg$ satisfying $$[h_\alpha,h_\beta]=0;\ [h_\alpha,e_\beta]=A_{\alpha \beta}e_\beta;\ [e_\alpha,e_{-\alpha}]=h_\alpha;\ [e_\alpha,e_\beta]=N_{\alpha,\beta}e_{\alpha+\beta},\text{ if }\alpha+\beta\in R,$$
where $A_{\alpha,\beta}$ and $N_{\alpha,\beta}$ are constant integers  (for determination of structural constants $N_{\alpha,\beta}$ see \cite{Ch}).
For each $\alpha\in R$, there is a unique 1-dimensional connected closed unipotent subgroup $U_\alpha\subset G$ and an isomorphism 
$$x_\alpha:\ \mathbb{G}_a\to U_\alpha$$ such that $sx_\alpha(t)s^{-1}=x_\alpha(\alpha(s)t)$ for all $s\in T$ and $t\in\mathbb{G}_a$. We assume that $dx_\alpha(1)=e_\alpha$ and $n_\alpha(t):=x_\alpha(t)x_{-\alpha}(-t^{-1})x_\alpha(t)$ normalizes $T$. Define $h_\alpha(t)=n_\alpha(t)n_\alpha(-1)$. Then $T$ is generated by $h_\alpha(\lambda),\alpha\in \Pi,\lambda\in\tk^\times$. Let $B$ be the Borel subgroup $UT$ of $G$, where $\displaystyle{U=\{\prod_{\alpha\in R^+}x_\alpha(t_\alpha)\,|\,t_\alpha\in\mathbb{G}_a\}}$.
By Bruhat decomposition, each $g\in G$ can be written uniquely in the form $g=bn_wu_w$ for some $w\in W=N_G(T)/T$, some $b\in B$ and some $\displaystyle{u_w\in U_w:=\{\prod_{\alpha>0,w(\alpha)<0}x_\alpha(t_\alpha)\,|\,t_\alpha\in\mathbb{G}_a\}}$, where $n_w$ is a representative of $w$ in $N_G(T)$. We can
choose $n_\alpha=n_\alpha(1)$ to be the representative of the simple reflection $s_\alpha\in W$, $\alpha\in \Pi$.
Let $\Lt$, $\Lb$ be the Lie algebra of $T$, $B$ respectively. We define $e_\alpha'\in\Lg^*$ by $$e_\alpha'(\Lt)=0;\ e_\alpha'(e_\beta)=\delta_{-\alpha,\beta},\forall\ \beta\in R.$$
Then $\{e_\alpha',\alpha\in R^+\}$ form a basis of $\Ln^*=\{\xi\in\Lg^*\,|\,\xi(\Lb)=0\}$. The coadjoint actions of $x_\alpha(t)$, $\alpha\in R^+$, $h_\alpha(\lambda)$ and $n_\alpha$, $\alpha\in\Pi$ on $e_\beta'$, $\alpha,\beta\in R^+$ are given as follows
\begin{eqnarray*}
\text{(a)}&&x_\alpha(t).e_\beta'=\sum_{i}(-1)^it^iM_{\alpha,-i\alpha-\beta,i}e_{i\alpha+\beta}',\ \beta\neq \alpha,\ x_\alpha(t).e_\alpha'=e_\alpha',\\
&&h_\alpha(\lambda).e_\beta'=\lambda^{A_{\alpha\beta}}e_\beta',\ n_\alpha.e_\beta'=\pm e_{s_\alpha(\beta)}',\ \alpha\in\Pi,
\end{eqnarray*}
where
$\displaystyle{
M_{\alpha,\beta,i}=\frac{1}{i!}N_{\alpha,\beta}N_{\alpha,\alpha+\beta}\cdots N_{\alpha,(i-1)\alpha+\beta}}
$ (here the equality is in $\mathbb{N}$ and we then reduce mod $p$ to regard $M_{\alpha,\beta,i}$ as a constant in $\tk$).

Since $\cN_{\Lg^*}=G.\Ln^*$, we can find representatives of nilpotent coadjoint orbits in $\Ln^*$, namely, we can choose elements of the form  $\xi=\sum_{\alpha\in R^+}a_\alpha e_\alpha'$, $a_\alpha\in\tF_q$. Now we can compute $|Z_G(\xi)(\tF_q)|$  using the Bruhat decomposition 
and (a). In particular, we need knowledge on the set $\{w\in W\,|\, Z_G(\xi)\cap(BwB)\neq\emptyset\}$.  Let\\[2pt]
\centerline {$\Delta^\xi_{\text{min}}$ be the set of minimal elements in the set $\{\alpha\in R^+\,|\,a_\alpha\neq 0\}$}\\[2pt]
under the order relation $>$ on $R^+$, where $\alpha>\beta$ if $\alpha-\beta$ can be written as a  sum of positive roots.
If $Z_G(\xi)\cap BwB\neq \emptyset$, then by   (a), for any $\alpha\in \Delta_{\min}^\xi$, there exists $\beta\in\Delta_{\min}^\xi$ such that $w(\alpha)\geq \beta$, more precisely,  
$$w(\alpha)\in\Delta^\xi=\{\beta\in R^+\,|\,c_\beta(b)\neq 0\text{ for some }b\in B\},$$ where we write $b.\xi=\sum_{\beta\in R^+}c_\beta(b) e_\beta'$ for $b\in B$. 
\subsection{}\label{ssec-g2}
Suppose that $G$ is of type $G_2$ and $p=3$ in this subsection. 
We denote by
$\alpha$ (resp. $\beta$) the short (resp. long) simple root. The structural constants can be chosen as follows
\begin{eqnarray*}
&&N_{\alpha,\beta}=1,\ N_{\alpha,\alpha+\beta}=2,\ N_{\alpha,2\alpha+\beta}=3,\ N_{\beta,3\alpha+\beta}=-1,\
N_{\alpha+\beta,2\alpha+\beta}=3.
\end{eqnarray*}
Fix $\zeta\in\tF_{\mathbf{q}}\backslash\{x^2\,|\,x\in\tF_{\mathbf{q}}\}$ and $\varpi\in\tF_{\mathbf{q}}\backslash\{x^3+x\,|\,x\in\tF_{\mathbf{q}}\}$. 
The representatives $\xi$ for nilpotent coadjoint orbits over $\tF_q$ and $|Z_G(\xi)(\tF_{\mathbf{q}})|$ are listed in Table \ref{tab:1}. 

\begin{table}[H]
\centering
\begin{tabular}{c l l}
\hline
Orbit&Representative $\xi$& $|Z_G(\xi)(\tF_{\mathbf{q}})|$\\
\hline
$G_2$&$\xi_1=e_\alpha'+e_\beta'$&$\mathbf{q}^2$\\
$G_2(a_1)$&$\xi_2=e_\beta'+e_{2\alpha+\beta}'$&$6\mathbf{q}^4$\\
$G_2(a_1)$&$\xi_{2,2}=e_\beta'+e_{2\alpha+\beta}'-\varpi e_{3\alpha+\beta}'$&$3\mathbf{q}^4$\\
$G_2(a_1)$&$\xi_{2,3}=e_\beta'-\zeta e_{2\alpha+\beta}'$&$2\mathbf{q}^4$\\
$\widetilde{A_1}$&$\xi_3=e_\alpha'$&$\mathbf{q}^4(\mathbf{q}^2-1)$\\
${A_1}$&$\xi_4=e_\beta'$&$\mathbf{q}^6(\mathbf{q}^2-1)$\\
$\emptyset$ &$\xi_5=0$&$\mathbf{q}^6(\mathbf{q}^2-1)(\mathbf{q}^6-1)$\\
\hline\\
\end{tabular}
\caption{Nilpotent coadjoint orbits in $\Lg^*(\tF_q)$, type $G_2$, $p=3$.}
\label{tab:1}
\end{table}
One can easily verify that $\xi_2$, $\xi_{2,2}$,  and $\xi_{2,3}$ are in the same $G$-orbit. Thus $\xi_1,\xi_2,\xi_3,\xi_4,\xi_5$ form  a set of representatives for $G$-obits in $\cN_{\Lg^*}$. This proves Proposition \ref{prop-number} for type $G_2$.

It is easy to verify that the closure relation among nilpotent coadjoint orbits in $\Lg^*$ is as in Figure \ref{fig1.tag} and  the nilpotent pieces in $\Lg^*$ coincide with nilpotent coadjoint orbits.

\begin{figure}[H]
\centering
\includegraphics[height=50mm]{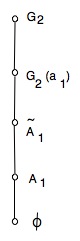}
\caption{Closure relation among nilpotent coadjoint orbits in $\Lg^*$, type $G_2$, $p=3$.}

\label{fig1.tag}
\end{figure}

\subsection{}\label{ssec-exn}Suppose that $G$ is of type $F_4$ and $p=2$ in this subsection. We denote by $p,q$ (resp. $r,s$) the  long (resp. short) simple roots  with $(q,r)\neq 0$.
We denote by $apbqcrds$ the root $ap+bq+cr+ds$.  The structural constants can be chosen as follows:
\begin{eqnarray*}
&&N_{p,q}=N_{p,qr}=N_{p,q2r}=N_{p,p3q4r2s}=N_{p,qrs}=N_{p,q2rs}=N_{p,q2r2s}=N_{q,rs}=N_{q,pq2r}=N_{q,pq2rs}\\
&&\quad=N_{q,pq2r2s}=N_{q,r}=N_{q,p2q4r2s}=N_{r,s}=N_{r,qrs}=N_{r,pqrs}=N_{r,p2q2rs}=N_{pq,rs}=N_{pq,p2q4r2s}\\
&&\quad=N_{s,q2r}=N_{s,pq2r}=N_{s,p2q2r}
=N_{rs,pqr}=N_{rs,p2q2r}=N_{qr,pqrs}=N_{qr,pq2r2s}=N_{q2r,pq2r2s}\\&&\quad=N_{q2r,p2q2r2s}=N_{pq2r,p2q2r2s}=N_{pqr,q2rs}=N_{qrs,pq2rs}=1,\\&&N_{r,pq}=N_{r,p2q2r2s}=N_{s,qr}=N_{s,pqr}=N_{s,p2q3rs}=N_{pq,q2r}=N_{pq,q2rs}=N_{pq,q2r2s}=N_{qr,rs}\\&& 
\quad=N_{qr,pq2rs}=N_{rs,p2q2rs}=N_{q2r,pqrs}=N_{pqr,q2r2s}=N_{pqr,qrs}=N_{pq2r,q2r2s}=N_{qrs,pq2r}\\
&&\quad=N_{pqrs,q2rs}=N_{p2q2r,pq2r2s}=N_{p2q2r,q2r2s}=-1,\\
&&N_{s,pq2rs}=N_{s,p2q2rs}=N_{s,q2rs}=N_{rs,p2q3rs}=N_{qrs,p2q3rs}=N_{pqrs,p2q3rs}=2,
\end{eqnarray*}
\begin{eqnarray*}
&&N_{r,pqr}=N_{r,qr}=N_{r,p2q3r2s}=N_{qr,p2q3r2s}=N_{qr,pqr}=
N_{rs,pqrs}=N_{rs,qrs}\\&&\quad=N_{pqr,p2q3r2s}=N_{qrs,pqrs}=N_{q2rs,pq2rs}=N_{q2rs,p2q2rs}=N_{pq2rs,p2q2rs}=-2.\end{eqnarray*}
Fix $\eta\in\tF_{\mathbf{q}}\backslash\{x^2+x\,|\,x\in \tF_{\mathbf{q}}\}$ and $\varpi\in\tF_{\mathbf{q}}\backslash\{x^3+x\,|\,x\in \tF_{\mathbf{q}}\}$. The representatives $\xi$ for nilpotent coadjoint orbits over $\tF_q$ and $|Z_G(\xi)(\tF_{\mathbf{q}})|$ are listed in Table \ref{tab:3}. 

\begin{table}[H]
\centering
\begin{tabular}{c l l}
\hline
Orbit&Representative $\xi$& $|Z_G(\xi)(\tF_{\mathbf{q}})|$\\
\hline
$F_4$&$\xi_1=e_p'+e_q'+e_r'+e_s'$&$\mathbf{q}^4$\\
$F_4(a_1)$&$\xi_2=e_p'+e_{qr}'+e_{q2r}'+e_s'$&$2\mathbf{q}^6$\\
$F_4(a_1)$&$\xi_{2,2}=e_p'+e_q'+e_{qr}'+e_s'+\eta e_{q2r}'$&$2\mathbf{q}^6$\\
$F_4(a_2)$&$\xi_{3}=e_{pq}'+e_{qr}'+e_{rs}'+e_{q2r}'$&$\mathbf{q}^8$\\
$B_3$&$\xi_{4}=e_p'+e_{qrs}'+e_{q2r}'+e_{pq2rs}'$&$\mathbf{q}^{10}$\\
$C_3$&$\xi_5=e_s'+e_{q2r}'+e_{pqr}'$&$\mathbf{q}^8(\mathbf{q}^2-1)$\\
$F_4(a_3)$&$\xi_6=e_{pqr}'+e_{qrs}'+e_{pq2r}'+e_{q2r2s}'$&$24\mathbf{q}^{12}$\\
$F_4(a_3)$&$\xi_{6,2}=e_{pq}'+e_{pqr}'+e_{q2rs}'+e_{q2r2s}'+\eta e_{pq2r}'$&$8\mathbf{q}^{12}$\\
$F_4(a_3)$&$\xi_{6,3}=e_{pqr}'+e_{qrs}'+e_{pq2r}'+e_{q2r2s}'+\eta e_{pq2r2s}'$&$4\mathbf{q}^{12}$\\
$F_4(a_3)$&$\xi_{6,4}=e_{pq}'+e_{pqr}'+e_{q2rs}'+e_{q2r2s}'+\eta e_{q}'$&$4\mathbf{q}^{12}$\\
$F_4(a_3)$&$\xi_{6,5}=e_{pqr}'+e_{qrs}'+e_{q2r}'+e_{q2r2s}'+\varpi e_{pq2r2s}'$&$3\mathbf{q}^{12}$\\
$(B_3)_2$&$\xi_7=e_p'+e_{qr}'+e_{q2r2s}'$&$\mathbf{q}^{10}(\mathbf{q}^2-1)$\\
$C_3(a_1)$&$\xi_8=e_{pqr}'+e_{q2rs}'+e_{q2r2s}'$&$2\mathbf{q}^{12}(\mathbf{q}^2-1)$\\
$C_3(a_1)$&$\xi_{8,2}=e_{pq}'+e_{pqr}'+e_{q2rs}'+\eta e_{pq2r}'$&$2\mathbf{q}^{12}(\mathbf{q}^2-1)$\\
$B_2$&$\xi_9=e_{pqr}'+e_{q2r2s}'$&$2\mathbf{q}^{12}(\mathbf{q}^2-1)^2$\\
$B_2$&$\xi_{9,2}=e_{pq}'+e_{pqr}'+e_{q2r2s}'+\eta e_{pq2r}'$&$2\mathbf{q}^{12}(\mathbf{q}^4-1)$\\
$\widetilde{A_2}+A_1$&$\xi_{10}=e_{pqrs}'+e_{q2rs}'+e_{p2q2r}'$&$\mathbf{q}^{14}(\mathbf{q}^2-1)$\\
$A_2+\widetilde{A_1}$&$\xi_{11}=e_{p2q2r}'+e_{q2r2s}'+e_{pq2rs}'$&$\mathbf{q}^{16}(\mathbf{q}^2-1)$\\
$\widetilde{A_2}$&$\xi_{12}=e_{pqrs}'+e_{q2rs}'$&$\mathbf{q}^{14}(\mathbf{q}^2-1)(\mathbf{q}^6-1)$\\
$A_2$&$\xi_{13}=e_{p2q2r}'+e_{pq2r2s}'+e_{p2q3r2s}'$&$\mathbf{q}^{20}(\mathbf{q}^2-1)$\\
$A_1+\widetilde{A_1}$&$\xi_{14}=e_{p2q2r2s}'+e_{p2q3rs}'$&$\mathbf{q}^{20}(\mathbf{q}^2-1)^2$\\
$(A_2)_2$&$\xi_{15}=e_{p2q2r}'+e_{pq2r2s}'$&$\mathbf{q}^{20}(\mathbf{q}^2-1)(\mathbf{q}^6-1)$\\
$\widetilde{A_1}$&$\xi_{16}=e_{p2q3r2s}'$&$2\mathbf{q}^{21}(\mathbf{q}^2-1)(\mathbf{q}^3-1)(\mathbf{q}^4-1)$\\
$\widetilde{A_1}$&$\xi_{16,2}=e_{p2q2r2s}'+e_{p2q3r2s}'+\eta e_{p2q4r2s}'$&$2\mathbf{q}^{21}(\mathbf{q}^2-1)(\mathbf{q}^3+1)(\mathbf{q}^4-1)$\\
$A_1$&$\xi_{17}=e_{2p3q4r2s}'$&$\mathbf{q}^{24}(\mathbf{q}^2-1)(\mathbf{q}^4-1)(\mathbf{q}^6-1)$\\
$\emptyset$&$\xi_{18}=0$&$\mathbf{q}^{24}(\mathbf{q}^2-1)(\mathbf{q}^6-1)(\mathbf{q}^8-1)(\mathbf{q}^{12}-1)$\\
\hline\\
\end{tabular}
\caption{Nilpotent coadjoint orbits in $\Lg^*(\tF_q)$, type $F_4$, $p=2$.}
\label{tab:3}
\end{table}

The computations of $|Z_G(\xi)(\tF_q)|$ are long and follow the strategy described in \ref{ssec-cen}. We give one example here and omit the details. We denote by $n_p,n_q,n_r,n_s$ the simple reflections in $W$. Consider $\xi_{7}=e_p'+e_{qr}'+e_{q2r2s}'$. We have $\Delta^{\xi_7}=R^+\backslash\{q,r,s,rs\}$ and $\Delta^{\xi_7}_{\min}=\{p,qr\}$ (see \ref{ssec-cen}). Assume $Z_G(\xi_7)\cap BwB\neq \emptyset$. Then $w(p)\in\Delta^{\xi_7}$ and $w(qr)\in\Delta^{\xi_7}$. In fact a further look at the formulas in \ref{ssec-cen} (a) shows that if either $w(qrs)\notin \Delta^{\xi_7}$, or both $w(q2r)\notin \Delta^{\xi_7}$ and $w(q2rs)\notin \Delta^{\xi_7}$, then $w(q2r2s)\in \Delta^{\xi_7}$. This forces $w\in\la n_r,n_s\ra$. Now it is easy to verify that
$$|(Z_G(\xi_{7})\cap BwB)(\tF_{\mathbf{q}})|=\left\{\begin{array}{ll}\mathbf{q}^9(\mathbf{q}-1)&\text{ if }w=1,n_r\\
\mathbf{q}^9(\mathbf{q}-1)^2&\text{ if }w=n_s,n_sn_r,n_rn_s\\\mathbf{q}^9(\mathbf{q}-1)^3&\text{ if }w=n_rn_sn_r.\end{array}\right.$$
Thus $|Z_G(\xi_7)(\tF_q)|=q^{10}(q^2-1).$

We need to show that $\xi,\xi'$ are not in the same $G(\tF_q)$-orbit for those $\xi,\xi'$ in Table 2 with $|Z_G(\xi)(\tF_q)|=|Z_G(\xi')(\tF_q)|$.
We verify this for $\xi_2$ and $\xi_{2,2}$. The verification of others is entirely similar. Assume that there exists $g\in G(\tF_q)$ such that $g.\xi_2=\xi_{2,2}$ and $g\in BwB$. By a similar argument as in the last paragraph of Subsection \ref{ssec-cen}, we have $w(p)>0,w(qr)>0,w(s)>0$ and $w^{-1}(p)>0,w^{-1}(q)>0,w^{-1}(s)>0$. A closer look at the formulas in \ref{ssec-cen} (a) shows that $w(rs)$ and $w(q2r)$ can not both be negative if $w(r)<0$. This forces $w=1$ or $w=n_r$. It is clear that $g\notin B$. Thus $g=bn_{r}x_r(\sigma)$ for some $b\in B$ and $\sigma\in\tF_q$. When using (a) to solve $g.\xi_2=\xi_{2,2}$, the following equation appears
$$\sigma^2+\sigma+\eta=0,$$
which has no solution in $\tF_q$ by our choice of $\eta$. Moreover the computation shows that $\xi_2$ and $\xi_{2,2}$ lie in the same $G$-orbit.

Similarly one can verify that $\{\xi_{6,i},i=1,\ldots,5\}$, $\{\xi_8,\xi_{8,2}\}$, $\{\xi_9,\xi_{9,2}\}$,
$\{\xi_{16},\xi_{16,2}\} $ are in the same $G$-orbit respectively. Hence $\xi_i$, $i\in\{1,\ldots,18\}$ form a set of  representatives for $G$-obits in $\cN_{\Lg^*}$.  This completes the classification of nilpotent coadjoint orbits in $\Lg^*$ for type $G_2$, $F_4$ and the proof of Proposition \ref{prop-number}.

\subsection{}Suppose again that $G$ is of type $F_4$ and $p=2$ in this subsection. 
For $\Sigma\subset R^+$, we define $$S(\Sigma)=\text{span}\{e_\alpha',\alpha\in\Sigma\}\subset{\Lg^*}.$$
Now let $S_i=S(\Sigma_i)$, $1\leq i\leq 15$ and $S_{16}=\{0\}$, where
\begin{eqnarray*}
&&\Sigma_1=R^+,\ \Sigma_2=R^+\backslash\{r\},\ \Sigma_3=R^+\backslash\{p,r\},\ \Sigma_4=R^+\backslash\{r,s,rs\},\ \Sigma_5=R^+\backslash\{p,q,r,pq,qr\},\\&& \Sigma_6=R^+\backslash\{p,r,s,rs\},\ \Sigma_7=R^+\backslash\{p,q,r,s,pq,rs,qr,qrs\},\\&& \Sigma_8=R^+\backslash\{q,r,s,qr,rs,q2r,qrs,q2rs\},\ \Sigma_9=R^+\backslash\{p,q,r,s,pq,rs,qr,q2r,pqr,pq2r\},\\&& \Sigma_{10}=R^+\backslash\{p,q,r,s,pq,rs,qr,pqr,qrs,pqrs\},\ \Sigma_{11}=R^+\backslash\la p,q,r\ra,\ \Sigma_{12}=R^+\backslash\la q,r,s\ra,\\&& \Sigma_{13}=\{p2q2r,p2q2rs,p2q2r2s,p2q3rs,p2q3r2s,p2q4r2s,p3q4r2s,2p3q4r2s\},\\&&\Sigma_{14}=\{q2r2s,pq2r2s,p2q2r2s,p2q3r2s,p2q4r2s,p3q4r2s,2p3q4r2s\},\ \Sigma_{15}=\{2p3q4r2s\}.
\end{eqnarray*}
For $E\subset\Lg^*$, we define $G(E)=\{g.\xi\,|\,g\in G,\xi\in E\}$. Using  \cite{CP} and well-known results on nilpotent coadjoint orbits in $\Lg^*$ in characteristic $0$ (identified with nilpotent orbits in $\Lg$), we get that the nilpotent pieces in $\Lg^*$ defined in \cite{CP} are as follows
\begin{eqnarray}\label{eqnp}
H_i=G(S_i)\backslash\bigcup_{j\in I_i}G(S_j), \ i=1,\ldots,16,
\end{eqnarray}
where
\begin{eqnarray*}
&&I_1=\{2,\ldots,16\}, I_2=\{3,\ldots,16\},I_3=\{4,\ldots,16\},\ I_4=I_5=\{6,\ldots,16\},\ I_6=\{7,\ldots,16\},\\
&&\ I_7=\{8,\ldots,16\},\ I_8=\{10,12,\ldots,16\},\ I_9=\{10,\ldots,16\},\ I_{10}=\{12,13,14,15,16\},\\&&
 \ I_{11}=I_{12}=\{13,\ldots,16\},\ I_{13}=\{14,15,16\},\  I_{14}=\{15,16\}, I_{15}=\{16\},\ I_{16}=\emptyset.
\end{eqnarray*}
Moreover \begin{equation}\label{eqnsub}
G(S_j)\subsetneqq G(S_i)\text{ for all }j\in I_i, \ i=1,\dots,16, 
\end{equation}
and $$\{\dim H_i,i=1,\ldots,16\}=\{48,46,44,42,42,40,38,36,36,34,30,30,28,22,16,0\}.$$
It is easy to see that 
\begin{eqnarray}\label{eqnr1}
&&\xi_i\in G(S_i)\text{ for }i\in\{1,\ldots,6\},\quad \xi_7\in G(S_4),\quad \xi_{i}\in G(S_{i-1})\text{ for }i\in\{8,\ldots,14\},\\&& \xi_{15}\in G(S_{12}),\quad  \xi_{i}\in G(S_{i-2})\text{ for } i\in\{16,17,18\}.\nonumber
\end{eqnarray} One can also verify that 
\begin{eqnarray}\label{eqnnin}&&\xi_4\notin G(S_5),\ \xi_5\notin G(S_4),\ \xi_7\notin G(S_5),\ \xi_9\notin G(S_9),\\
&& \xi_{10}\notin G(S_8),\ \xi_{12}\notin G(S_{12}),\ \xi_{13}\notin G(S_{11}),\ \xi_{15}\notin G(S_{11}).\nonumber
\end{eqnarray} For example, $\xi_7\notin G(S_5)$ since there exists no $w\in W$ such that $w(p)\in \Sigma_5$, $w(qr)\in\Sigma_5$, and $w(q2r2s)\in \Sigma_5$ if either $w(qrs)\notin \Sigma_5$, or both $w(q2r)\notin \Sigma_5$ and $w(q2rs)\notin \Sigma_5$.

It follows from (\ref{eqnp})-(\ref{eqnnin}) and dimension consideration that the nilpotent
pieces in $\Lg^*$ are as follows
\begin{eqnarray*}
&&H_i=C_i,\  i=1,2,3,5,6,\\
&&H_i=C_{i+1},\ i=7,8,9,10,11,13,\\
&&H_i=C_{i+2},\ i=14,15,16,\\
&&H_4=C_4\cup C_7,\\
&&H_{12}=C_{13}\cup C_{15},
\end{eqnarray*}
where $C_i$ is the $G$-orbit of $\xi_i$, $i=1,\ldots,18$.

By \cite{CP} and \cite{H2}, we have $\overline{H_i}=G(S_i)$. It follows that $\overline{C_4}=G(S_4)$, $\overline{C_{13}}=G(S_{12})$, and moreover $C_7\nsubseteq\overline{C_5}$, $C_{15}\nsubseteq\overline{C_{14}}$ in view of (\ref{eqnsub}) and (\ref{eqnnin}). We show that
\begin{eqnarray}
&&C_9\subset\overline{C_7},\qquad C_{12}\nsubseteq\overline{C_{7}}\label{eqn-1}\\
&&C_{17}\subset\overline{C_{15}},\qquad C_{16}\nsubseteq\overline{C_{15}}\label{eqn-2}.
\end{eqnarray}
Let $C,C'$ be two nilpotent coadjoint orbits and $\xi\in C\cap\Ln^*$. We have $C'\subset\overline{C}$ if and only if $C'\cap\overline{B.\xi}\neq\emptyset$ since $G/B$ is complete. Using   $(a)$ in \ref{ssec-cen}, one can show that
\begin{eqnarray}
&&\overline{B.\xi_7}=\{\xi=\sum_{\beta\in R^+\backslash\{q,r,s,rs\}}c_\beta e_\beta'\,|\,c_{pqr}c_{qrs}=c_{qr}c_{pqrs},\ c_{pqr}c_{q2rs}=c_{qr}c_{pq2rs},\label{eqn-4}\\&&\hspace{2.1in} c_{pq2rs}c_{qrs}=c_{q2rs}c_{pqrs}\}\nonumber\\
&&\overline{B.\xi_{15}}=\text{span}\{e_{\beta}',\ \beta\in\{p2q2r,pq2r2s,p2q2r2s,p2q4r2s,p3q4r2s,2p3q4r2s\}\}\label{eqn-3}.
\end{eqnarray}
Now (\ref{eqn-2}) follows from (\ref{eqn-3}) and $C_{9}\subset\overline{C_7}$ follows from (\ref{eqn-4}). Suppose that $C_{12}\subset\overline{C_7}$. Then there exists $g\in G$ such that $g.\xi_{12}\in\overline{B.\xi_7}$. Suppose that  $g\in BwB$. By (\ref{eqn-4}), $w(pqrs),\ w(q2rs)\in R^+\backslash\{q,r,s,rs\}$. It follows that \begin{eqnarray}\label{eqn5}
&&\{w(pqrs),\ w(q2rs)\}\in\{\{pqr,qrs\},\{qr,pqrs\},\{pqr,q2rs\},\\&&\hspace{2in}\{qr,pq2rs\}, \{pq2rs,qrs\},\{q2rs,pqrs\}\}.\nonumber
\end{eqnarray}
Suppose $\{w(pqrs),\ w(q2rs)\}=\{pqr,qrs\}$. Write $g.\xi_{12}=\sum c_\beta e_\beta'$. Then $c_{pqr}\neq 0, c_{qrs}\neq 0$. Since $g.\xi_{12}\in\overline{B.\xi_7}$, by (\ref{eqn-4}), there exist $\beta,\beta'\in R^+$ greater than $pqrs$ or $q2rs$ such that $\{w(\beta),w(\beta')\}=\{qr,pqrs\}$. But this is impossible. Similarly one shows that $\{w(pqrs),\ w(q2rs)\}$ can not equal 
any set of pairs in the right hand side of (\ref{eqn5}). This gives us a contradiction. Thus (\ref{eqn-1}) is proved.

It follows from the above discussion that the closure relation on nilpotent coadjoint orbits in $\Lg^*$  is as in Figure \ref{fig2.tag}.

\begin{figure}
\centering
\includegraphics[height=130mm]{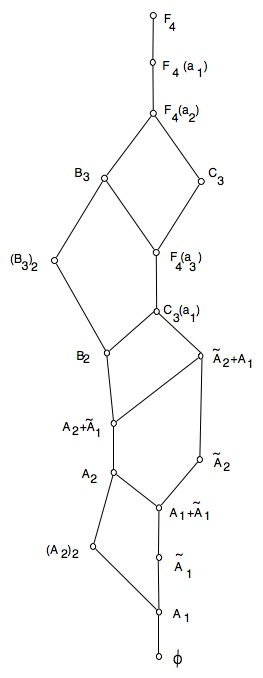}
\caption{Closure relation among nilpotent coadjoint orbits in $\Lg^*$, type $F_4$, $p=2$.}

\label{fig2.tag}
\end{figure}

\end{document}